\newcommand{\matindex}[1]{\mbox{\scriptsize#1}}
\newcommand{\xdownarrow}[1]{%
  {\left\downarrow\vbox to #1{}\right.\kern-\nulldelimiterspace}
}
\newcommand{\quash}[1]{}  
\newcommand{\M}{{\rm M}}
\newcommand{\loc}{{\rm loc}}
\newcommand{\spl}{{\rm spl}}
\newcommand{\ZZ}{\mathbb{Z}}
\newcommand{\calF}{\mathcal{F}}
\newcommand{\calL}{\mathcal{L}}
\newcommand{\calU}{\mathcal{U}}
\newcommand{\calN}{\mathcal{N}}
\newcommand{\calZ}{\mathcal{Z}}
\newcommand{\calY}{\mathcal{Y}}
\newcommand{\scrG}{\mathscr{G}}
\newcommand{\bb }{\langle}
\newcommand{\pp}{\rangle}
\newcommand{\rightarroweq}{\stackrel{\sim}{\rightarrow}}
\newcommand{\br}{\breve}
\numberwithin{equation}{subsection}
\newtheorem{Theorem}{Theorem}[section]
\newtheorem{Remark}[Theorem]{Remark}
\newtheorem{Remarks}[Theorem]{Remarks}
\newtheorem{Lemma}[Theorem]{Lemma}
\newtheorem{Proposition}[Theorem]{Proposition}
\newtheorem{Corollary}[Theorem]{Corollary}
\newtheorem{Main Theorem}[Theorem]{Main Theorem}
\newtheorem{Definition}[Theorem]{Definition}
\renewcommand*\env@matrix[1][*\c@MaxMatrixCols c]{%
  \hskip -\arraycolsep
  \let\@ifnextchar\new@ifnextchar
  \array{#1}}
\newif\ifgrading
\newcommand{\calG}{{\mathcal G}}
\newcommand{\Spec}{{\rm Spec \, } }
\newcommand{\Addresses}{{
		\bigskip
		\footnotesize
		\textsc{Department of Mathematics, Universität Münster, Münster, 48149, Germany}\par\nopagebreak
		\textit{E-mail address:} \texttt{io.zachos@uni-muenster.de}\\
		
	\textsc{Department of Applied Mathematics, University of Science and Technology Beijing,
			Beijing, 100083, China}\par\nopagebreak
		\textit{E-mail address:} \texttt{zhihaozhao@ustb.edu.cn}
}}	
\begin{document}
\title[Unitary splitting Rapoport--Zink spaces]{The basic locus of unitary splitting Rapoport--Zink spaces with vertex stabilizer level}
	\date{}
	\author{I. Zachos and Z. Zhao}
		
	\begin{abstract}
 We construct the Bruhat-Tits stratification of the ramified unitary splitting Rapoport-Zink
space, with the level being the stabilizer of a vertex lattice. To determine certain local properties of the Bruhat-Tits strata, we develop a theory of the strata splitting models. To study their global structure, we establish an explicit isomorphism between the Bruhat-Tits strata and certain (modified) Deligne-Lusztig varieties.
	\end{abstract}
\maketitle	
	\tableofcontents
\section{Introduction}\label{Intro}
\subsection{}
This paper contributes to the theory of integral models of Shimura varieties by providing a concrete description of the reduced basic locus of certain ramified unitary Rapoport-Zink (RZ) spaces at a maximal vertex level with signature $ (n-1,1)$. The study of the basic locus has a long history, and various cases of both orthogonal and unitary RZ spaces have been considered. This has led to many important applications in number theory. 
 For example, it has found applications in Kudla-Rapoport conjecture, 
 which relates arithmetic intersection numbers of special cycles on Shimura varieties to Eisenstein series (see \cite{CZ1,CZ2,HLSY,LL,Yao,Luo2}). It has also played a role in the arithmetic Gan–Gross–Prasad conjecture, the arithmetic fundamental lemma conjecture and the arithmetic transfer conjecture (see, for example,  \cite{RSZ,Zhang,LMZ,LRZ}).



In the orthogonal case, the reduced basic locus of the RZ space was first studied by Howard-Pappas \cite{HP} in the self-dual case, and subsequently by Oki \cite{Oki} in the almost self-dual case. More recently, He-Zhou \cite{HZ} generalized these results by treating all maximal level structures. 

For the general unitary group ${\rm GU}(n-1,1)$, the basic locus of the RZ space was first studied by Vollaard \cite{Vol10} and Vollaard-Wedhorn \cite{VW} at an inert prime with hyperspecial level. This was later extended by Cho \cite{Cho} to all maximal parahoric level structures and more recently by Muller \cite{Muller} for arbitrary parahoric level. In the ramified case, the basic locus was studied by Rapoport-Terstiege-Wilson \cite{RTW} at self-dual levels, by Wu \cite{Wu} for the exotic smooth cases, and more recently by He-Luo-Shi \cite{HLS2}, who extended the results to all maximal vertex levels. 

Roughly speaking, in all the above cases the following picture emerges: the basic locus admits a stratification by (generalized) Deligne-Lusztig varieties and the intersection pattern of the strata can be described in terms of some Bruhat-Tits building. This is the so-called \textit{Bruhat-Tits (BT) stratification}. In the work of G\"ortz-He-Nie \cite{GHN}, the authors give a complete classification of the Shimura varieties whose basic locus admits a BT-stratification. 

For the unitary ramified case, which is the case we are interested in, the RZ spaces have an explicit moduli description and this is a key tool for the study of their basic locus. These RZ spaces have bad reduction at all maximal vertex levels, except for the two exotic smooth cases studied in \cite{Wu}. 
For the self-dual and almost $\pi$-modular cases, in order to resolve the singularities, variations of this moduli problem were obtained by Krämer \cite{Kr} and Richarz \cite{Richarz}, respectively, in which they added to the moduli problem an additional linear datum of a flag of the Hodge filtration with certain restrictive properties. This construction was then generalized in our work \cite{ZacZhao2} to all maximal vertex lattices. These are the \textit{splitting Rapoport–Zink spaces}. 

In the work of He-Li-Shi-Yang \cite{HLSY}, the basic locus was studied for the Krämer model. In this paper, we generalize the results of \cite{HLSY} and give a concrete description of the reduced basic locus of the splitting RZ space for any maximal vertex lattice. More precisely, we describe the BT stratification of the basic locus and show that each BT stratum is isomorphic to a certain (modified) Deligne-Lusztig variety. 
Moreover, to study local properties of these BT strata—such as normality, dimension, and reducedness—
inspired by \cite{HLS2}, we develop a theory of \textit{strata splitting models}, which are simpler schemes defined by purely linear-algebraic data, and we prove that the BT strata are \'etale locally isomorphic to 
these models. 

We hope that the theory of strata splitting models will be a useful tool in the study of the reduced basic locus of splitting RZ spaces of higher signatures, such as those considered by Hernandez-Bijakowski \cite{BH}, in our work \cite{ZacZhao1} and in our joint work with Bijakowski \cite{BZZ}. Finally, we point out that little is known about splitting models for more general (quasi-)parahoric levels, and we intend to pursue this direction in future work. We anticipate that the strata splitting models introduced here will provide a useful tool for studying the corresponding basic loci at deeper level structures.

\subsection{}
Let us give some details. To explain our results, we begin by introducing some notation. Let $F/F_0$ be a ramified extension of $p$-adic fields, 
where $p$ is an odd prime, with residue field $k$ and uniformizers $\pi$ and $\pi_0$ respectively, satisfying $\pi^2=\pi_0$. Let $\bar{k}$ be a fixed algebraic closure of $k$. Denote by $\breve F$ the completion of the maximal unramified extension of $F$ and let $O_F$, $O_{\breve F}$ be the ring of integers of $F$, $\breve F$ respectively. Let $h, n$ be integers with $0 \leq h <\frac{n}{2} $. (Note that the choice of $h$ depends on the maximal vertex level.)

Fix a supersingular hermitian $O_F$-module $(\mathbb{X},\iota_{\mathbb{X}},\lambda_{\mathbb{X}})$ over $\Spec \bar{k}$ of rank $n$ and type $2h$ (with signature $(n-1, 1)$); this is the framing object and we refer the reader to 
\S \ref{sec2.1} for the precise definition. We define the splitting RZ space $\mathcal{N}_n^{\rm spl}$ of signature $(n-1,1)$ to be the moduli functor that assigns to each \( S \in \mathrm{Nilp}\, O_{\breve{F}} \) the set of isomorphism classes of quintuples $(X, \iota, \lambda, \rho,\operatorname{Fil}^0(X))$ where $(X, \iota, \lambda) $ is a hermitian $O_F$-module over $S$ of dimension $n$ and type $2h$, $ \rho$ is an \( O_F \)-linear quasi-isogeny of height zero from $X$ to the framing object on the special fiber, and $\operatorname{Fil}^0(X)$ is locally a $\mathcal{O}_S$-direct summand of the Hodge filtration $\operatorname{Fil}(X) \subset D(X)$ of rank one that satisfies the \textit{splitting conditions} 
    \[  (\iota(\pi)+\pi) (\operatorname{Fil}(X)) \subset \operatorname{Fil}^0(X) \quad \text{and} \quad (\iota(\pi)-\pi) (\operatorname{Fil}^0(X))=0;
    \]  
see Definition \ref{SplDef} for more details. By the local model diagram, $\mathcal{N}_n^{\rm spl}$ is \'etale locally isomorphic to the splitting model $\M_n^{\spl, [2h]}$ defined in \cite{ZacZhao2}. Thus, $\mathcal{N}_n^{\rm spl}$ is representable by a flat normal formal scheme of relative dimension $n - 1$ over $\operatorname{Spf} O_{\breve F}$.


To the triple $(\mathbb{X},\iota_{\mathbb{X}},\lambda_{\mathbb{X}})$, there exists a hermitian space $C$ over $F$ of dimension $n$. Consider a lattice $\Lambda\subset C$ and its dual lattice $\Lambda^\sharp$ with respect to the hermitian form on $C$. We call $\Lambda\subset C$ a vertex lattice if it satisfies 
\[
\pi \Lambda^{\sharp} \subset \Lambda \subset  \Lambda^{\sharp}.   
\]
We denote by $t(\Lambda):=\dim(\Lambda^\sharp/\Lambda)$ the type of a vertex lattice, which is an even integer (see \S \ref{sec2.2}). By abuse of notation, we will write $2t$ instead of $t(\Lambda)$. 
Let $L_{\mathcal{Z}}$ denote the set of all vertex lattices of type $2t \geq 2h$, and let $L_{\mathcal{Y}}$ denote the set of all vertex lattices of type $2t\leq 2h$. For each $\Lambda_1 \in L_{\mathcal{Z}}$ and $\Lambda_2 \in L_{\mathcal{Y}}$, we define closed subschemes $\mathcal{Z}^{\rm spl}(\Lambda_1)$ and $\mathcal{Y}^{\rm spl}(\Lambda_2^\sharp)$ of the special fiber of the RZ space $\calN_{n}^\spl$ (see \S \ref{BT_strata}).
The first main result of the current work is the following theorem which is proved in \S \ref{BTstrat.}. 
\begin{Theorem}
The Bruhat-Tits stratification of the reduced basic locus of the splitting {\rm RZ} space $\calN_{n, {\rm red}}^\spl$ is
\[
      \calN_{n, {\rm red}}^\spl = \left( \bigcup_{\Lambda_1 \in L_{\mathcal{Z}} }\mathcal{Z}^{\rm spl}(\Lambda_1) \right) \cup \left( \bigcup_{\Lambda_2 \in L_{\mathcal{Y}}} \mathcal{Y}^{\rm spl}(\Lambda_2^\sharp) \right).
\]	
  \begin{enumerate}
  \item  
   These strata satisfy the following inclusion relations:
  \begin{itemize}
    \item[(i)] For any $\Lambda_1, \Lambda_2 \in L_{\mathcal{Z}}$ of type greater than $ 2h$, 
    $  \Lambda_1 \subseteq \Lambda_2$ if and only if $\mathcal{Z}^{\rm spl}(\Lambda_2) \subseteq \mathcal{Z}^{\rm spl}(\Lambda_1)$.    
    \item[(ii)] For any $\Lambda_1, \Lambda_2 \in  L_{\mathcal{Y}}$ of type less than $2h$, 
    $  \Lambda_1 \subseteq \Lambda_2$ if and only if $\mathcal{Y}^{\rm spl}(\Lambda^{\sharp}_1) \subseteq \mathcal{Y}^{\rm spl}(\Lambda^{\sharp}_2)$.  
    \item[(iii)] For any $\Lambda_1\in L_{\mathcal{Z}}$ of type greater than $2h$, $\Lambda_2 \in  L_{\mathcal{Y}}$ of type less than $2h$, $  \Lambda_1 \subseteq \Lambda_2$ if and only if the intersection $\mathcal{Z}^{\rm spl}(\Lambda_1) \cap \mathcal{Y}^{\rm spl}(\Lambda_2^\sharp)$ is non-empty.
  \end{itemize}

  \item In the following, assume that $\Lambda, \Lambda'$ are vertex lattices of type $ 2t$ with $t \neq h$, and $\Lambda_0, \Lambda'_0$ are vertex lattices of type $2t$ with $ t=h$. 
  \begin{itemize}
    \item[(i)] The intersection $\mathcal{Z}^{\rm spl}(\Lambda) \cap \mathcal{Z}^{\rm spl}(\Lambda')$ (resp. $\mathcal{Y}^{\rm spl}(\Lambda^\sharp) \cap \mathcal{Y}^{\rm spl}(\Lambda^{\prime \sharp)}$) is non-empty if and only if $\Lambda'' = \Lambda + \Lambda'$ (resp. $\Lambda''=\Lambda \cap \Lambda'$) is a vertex lattice; in which case we have $\mathcal{Z}^{\rm spl}(\Lambda) \cap \mathcal{Z}^{\rm spl}(\Lambda') = \mathcal{Z}^{\rm spl}(\Lambda'')$ (resp. $\mathcal{Y}^{\rm spl}(\Lambda^\sharp) \cap \mathcal{Y}^{\rm spl}(\Lambda^{\prime \sharp}) = \mathcal{Y}^{\rm spl}(\Lambda^{\prime \prime \sharp})$).

    \item[(ii)] The intersection $\mathcal{Z}^{\rm spl}(\Lambda_0) \cap \mathcal{Z}^{\rm spl}(\Lambda_0')$ (or $\mathcal{Y}^{\rm spl}(\Lambda_0^\sharp) \cap \mathcal{Y}^{\rm spl}(\Lambda_0^{\prime \sharp})$) is always empty if $\Lambda_0 \ne \Lambda'_0$.
    
    \item[(iii)] The intersection $\mathcal{Z}^{\rm spl}(\Lambda) \cap \mathcal{Z}^{\rm spl}(\Lambda_0)$ (resp.  $\mathcal{Y}^{\rm spl}(\Lambda^\sharp) \cap \mathcal{Y}^{\rm spl}(\Lambda_0^\sharp)$) is non-empty if and only if $\Lambda \subset \Lambda_0$ (resp. $\Lambda_0 \subset \Lambda$), in which case $\mathcal{Z}^{\rm spl}(\Lambda) \cap \mathcal{Z}^{\rm spl}(\Lambda_0)$ (resp.  $\mathcal{Y}^{\rm spl}(\Lambda^\sharp) \cap \mathcal{Y}^{\rm spl}(\Lambda_0^\sharp)$) is isomorphic to $\mathbb{P}^{h+t-1}_{\bar{k}}$ (resp. $\mathbb{P}^{h-t-1}_{\bar{k}}$).
    \item[(iv)] The BT-strata $\mathcal{Z}^{\mathrm{spl}}(\Lambda_0)$ and $\mathcal{Y}^{\mathrm{spl}}(\Lambda_0^\sharp)$ are each isomorphic to the projective space $\mathbb{P}^{n-1}_{\bar{k}}$.
   \end{itemize}
   \end{enumerate}
\end{Theorem}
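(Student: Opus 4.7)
My plan is to reduce the theorem to Dieudonné-theoretic statements, using the forgetful morphism $\calN_n^\spl \to \calN_n$ to the plain ramified unitary RZ space, whose BT stratification is established in \cite{HLS2}. By the local model diagram $\calN_n^\spl$ is \'etale locally isomorphic to the splitting model $\M_n^{\spl,[2h]}$ of \cite{ZacZhao2}, so local properties of the strata (reducedness, normality, dimension) can be transferred to the linear-algebraic setting; where finer information is needed, the strata splitting models developed in this paper provide simpler local charts.

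For the covering assertion, I associate to each $\bar k$-point $(X,\iota,\lambda,\rho,{\rm Fil}^0(X))$ the Dieudonné lattice $M \subset C\otimes_F \breve F$ together with its splitting line $\scrF^0 \subset M/\pi M$. A suitable $F$-rational closure of $M$ produces a vertex lattice $\Lambda \subset C$, and the splitting conditions $(\iota(\pi)+\pi){\rm Fil} \subset {\rm Fil}^0$ and $(\iota(\pi)-\pi){\rm Fil}^0 = 0$ together with the Kottwitz signature $(n-1,1)$ force $t(\Lambda)$ to satisfy exactly one of $2t \geq 2h$ or $2t \leq 2h$. Accordingly the point lies in some $\calZ^\spl(\Lambda_1)$ or $\calY^\spl(\Lambda_2^\sharp)$, which gives the desired union.

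For part (1), items (i)--(ii) are immediate from the monotone nature of the defining lattice-containment conditions; for (iii), non-emptiness of $\calZ^\spl(\Lambda_1)\cap\calY^\spl(\Lambda_2^\sharp)$ when $\Lambda_1\subseteq\Lambda_2$ is witnessed by an explicit basic Dieudonné lattice sandwiched between $\Lambda_1$ and $\Lambda_2^\sharp$, equipped with a valid splitting line built in a standard basis of $C$. Part (2)(i) is transferred directly from the unsplit case in \cite{HLS2}, since outside the exceptional type $2t=2h$ the splitting datum is determined by $M$. Part (2)(ii) reflects that at the middle type the splitting line rigidifies the lattice choice, so distinct vertex lattices $\Lambda_0 \neq \Lambda_0'$ of type $2h$ cannot share a common splitting point.

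The main obstacle lies in (2)(iii)--(iv). For (iv) I expect $\calZ^\spl(\Lambda_0)$ and $\calY^\spl(\Lambda_0^\sharp)$ to be fibers of $\calN_n^\spl \to \calN_n$ over a single point coming from $\Lambda_0$, parametrized by choices of the splitting line $\scrF^0$ in an $n$-dimensional $\bar k$-vector space, hence isomorphic to $\PP^{n-1}_{\bar{k}}$. For (iii), the same fiber description, restricted by the containment $\Lambda \subset \Lambda_0$ (resp.\ $\Lambda_0 \subset \Lambda$), should cut out a linear subspace of dimension $h+t-1$ (resp.\ $h-t-1$). The delicate step is making the dimension count and the scheme-theoretic identification rigorous: here the \'etale-local isomorphism with the strata splitting models is essential, since their explicit defining equations allow one to verify reducedness and read off the projective-space structure directly, rather than relying only on the $\bar k$-point count.
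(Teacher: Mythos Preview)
Your overall strategy---reduce to Dieudonn\'e lattices $(M,M')$ via Proposition~\ref{DSpl}, invoke the BT stratification of $\calN_n^{\rm loc}$ from \cite{HLS2}, and then track the extra splitting datum---is exactly the paper's approach. However, the covering argument as you sketch it has a gap. Producing a vertex lattice $\Lambda$ with $\breve\Lambda\subset M$ (via Proposition~\ref{prop 51}) is not enough to place the point in $\calZ^\spl(\Lambda)$: by Proposition~\ref{DieudLatt} you must also verify $\breve\Lambda\subset M'$. The key observation you allude to in (2)(i) but do not deploy here is that when $\Lambda$ has type $\neq 2h$, the lattice $M$ is not $\tau$-invariant, whence $\tau^{-1}(M^\sharp)\cap M^\sharp\subsetneq M^\sharp$ has colength one and therefore $M'=M^\sharp\cap\tau^{-1}(M^\sharp)$ is \emph{forced}; the $\tau$-invariance of $\Lambda$ then gives $\breve\Lambda\subset M'$ automatically. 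This single fact drives both the covering and (2)(i), and should be stated explicitly.

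Two smaller corrections. In (2)(ii) your reasoning is inverted: the intersection is empty not because the splitting line rigidifies the lattice, but because already $\calZ^{\rm loc}(\Lambda_0)=\{\breve\Lambda_0\}$ is a single point (Corollary~\ref{t0}), so distinct $\Lambda_0\neq\Lambda_0'$ give disjoint fibers regardless of the splitting datum. For (2)(iii)--(iv) you overestimate the difficulty: the paper does not use the strata splitting models here at all. Since $M=\breve\Lambda_0$ is fixed, the moduli description directly exhibits the fiber as $\mathbb{P}(\breve\Lambda_0^\sharp/\breve\Lambda)$ (resp.\ the full $\mathbb{P}^{n-1}$ of lines in $\operatorname{Lie}X^\vee$), and this is already scheme-theoretic because $\calF$ ranges over a Grassmannian with no further constraint. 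Finally, for the converse in (1)(iii) the paper takes a different route than your explicit Dieudonn\'e construction: it shows the intersection of strata splitting models $\M_n^{\spl,[2h]}(2t_1)\cap\M_n^{\spl,[2h]}(2t_2)$ is non-empty (\S\ref{intersection}) and transfers this via the local model diagram. Your direct construction would also work, but the paper's method reuses machinery already in place.
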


Note that there are no $\calY^\spl$-strata in the BT stratification when $h = 0$, and in this case our results recover those of \cite{HLSY}. We also highlight that, by definition, $\mathcal{Z}^{\rm spl}(\Lambda_1)$ and $\mathcal{Y}^{\rm spl}(\Lambda_2^\sharp)$ are closed subschemes of the special fiber of $\calN_n^\spl$, and we show in Corollary \ref{Reducedness} that these subschemes are reduced. In \cite{HLSY}, the authors also show that the $\calZ^\spl$-strata are reduced for $h = 0$. However, our method is different, more uniform, and applies to any vertex stabilizer level. To prove reducedness, in \S \ref{LPBT}, we show that these BT strata are \'etale locally isomorphic to certain simpler schemes—the strata splitting models $\M_n^{\spl, [2h]}(2t)$ which are closed subschemes of the special fiber of $\M_n^{\spl, [2h]}$ and are introduced in \S \ref{StrataSplModles}. In particular, we construct a local model diagram  

 \begin{equation}
\begin{tikzcd}
&\tilde{\mathcal{Z}}^{\rm spl}(\Lambda_1)\arrow[dl, "\psi_1"']\arrow[dr, "\psi_2"]  & \\
\mathcal{Z}^{\rm spl}(\Lambda_1)  &&  \M^{\spl, [2h]}_n(2t)
\end{tikzcd}
\end{equation}
where $\M^{\spl, [2h]}_n(2t)$ is the strata splitting model with $t> h$.  The morphisms $\psi_1$ and $\psi_2$ are smooth of the same dimension. Similarly, we have a local model diagram for $\mathcal{Y}^{\rm spl}(\Lambda_2^\sharp)$ and $\M^{\spl, [2h]}_n(2t)$ where $t< h$. 

Therefore, to obtain certain nice local properties for the BT-strata, it is enough to study $\M^{\spl, [2h]}_n(2t)$. Similar to the splitting models associated to Shimura varieties, we explicitly calculate an open affine covering $\cup~ \calU_{i_0}$ of $\M^{\spl, [2h]}_n(2t)$. Each affine neighborhood is isomorphic to 
\begin{equation}
\Spec \frac{k[X, Y, Z]}{\left(
{\rm rk}\!\left(
\left[
\begin{array}{c}
X \\ \hline
Y
\end{array}
\right]
\right) - 1,\ 
\wedge^2([Y \mid Z])
\right)}
\end{equation}
for $t> h$. Here, $X$ is a matrix of size $2h\times 1$, and $Y, Z$ are matrices of size $(t-h)\times 1$. The rank condition is expressed by imposing that a certain $i_0$-th entry of the matrix $[X^t \mid Y^t]$ is a unit. When $t<h$, the open affine chart $\calU_{i_0}$  is isomorphic to $\mathbb{A}_k^{n-h-t-1}$ (see Propositions \ref{prop X-strata} and \ref{prop Y-strata} for more details). Studying these affine schemes, in \S \ref{StrataSplModles}, we deduce that: 


\begin{Theorem}
The strata splitting model $\M^{\spl, [2h]}_n(2t)$ is 
normal and Cohen-Macaulay. Moreover, 

(1). For $t> h$, the strata splitting model $\M^{\spl, [2h]}_n(2t)$ has dimension $t+h$.

(2). For $t< h$, excluding the case where $n$ is even and $h=\frac n2$ ($\pi$-modular case), the strata splitting model $\M^{\spl, [2h]}_n(2t)$ is smooth of dimension $n-t-h-1$.  

(3). For $t_2<h<t_1$, the intersection of strata splitting models $\M^{\spl, [2h]}_n(2t_1)\cap \M^{\spl, [2h]}_n(2t_2)$ is smooth of dimension $t_1-t_2-1$.
\end{Theorem}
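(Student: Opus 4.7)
The plan is to reduce the theorem to a chart-by-chart computation using the explicit open affine cover $\bigcup \calU_{i_0}$ of $\M_n^{\spl,[2h]}(2t)$ recalled just before the statement.

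For part (1), fix $t>h$. Each chart $\calU_{i_0}$ is given by
$$\Spec \frac{k[X,Y,Z]}{\left(\mathrm{rk}\!\left[\begin{array}{c} X \\ \hline Y \end{array}\right] - 1,\ \wedge^2([Y \mid Z])\right)},$$
where $X$ is a column of size $2h$, $Y$ and $Z$ are columns of size $t-h$, and the rank condition is the open condition that the $i_0$-th entry of $[X^t \mid Y^t]$ is a unit. I would split into two cases according to where $i_0$ sits. If $i_0$ lies in the $Y$-block, then after inverting the corresponding entry of $Y$ the vanishing of the $2 \times 2$ minors of $[Y \mid Z]$ forces $Z=\lambda Y$ for a single free scalar $\lambda$; together with the free entries of $X$ and the remaining entries of $Y$ this identifies the chart with $\mathbb{A}^{t+h}_k$, hence smooth. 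If instead $i_0$ lies in the $X$-block, then the remaining $2h-1$ entries of $X$ are free while $(Y,Z)$ varies in the affine variety $D$ of $(t-h)\times 2$ matrices of rank at most $1$. The variety $D$ is a classical rank-one determinantal variety, hence normal, Cohen--Macaulay, and of dimension $(t-h)+1$; the chart is therefore normal and Cohen--Macaulay of dimension $(2h-1)+(t-h+1)=t+h$. Since normality and the Cohen--Macaulay property glue across an affine cover of constant dimension, part (1) follows.

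For part (2), when $t<h$ and the $\pi$-modular case is excluded, the description of $\calU_{i_0}$ recalled above identifies each chart with $\mathbb{A}^{n-h-t-1}_k$, so $\M_n^{\spl,[2h]}(2t)$ is smooth of dimension $n-h-t-1$ by the same glueing.

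For part (3), I would realise $\M_n^{\spl,[2h]}(2t_1)\cap\M_n^{\spl,[2h]}(2t_2)$ inside an affine cover of $\M_n^{\spl,[2h]}$ by superimposing the rank conditions defining each factor. The expectation is that the equations cutting out $\M_n^{\spl,[2h]}(2t_2)$ trivialise the non-smooth determinantal part of the equations cutting out $\M_n^{\spl,[2h]}(2t_1)$, so that in every chart the intersection collapses to an affine space of dimension $t_1-t_2-1$, from which smoothness and the stated dimension both follow.

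\textbf{Main obstacle.} Parts (1) and (2) are essentially standard once the charts are at hand. The real difficulty is part (3): one must choose local coordinates on $\M_n^{\spl,[2h]}$ in which both the $2t_1$- and $2t_2$-rank conditions are simultaneously transparent, and then verify that the $\wedge^2=0$ equations responsible for the singular determinantal locus in $\M_n^{\spl,[2h]}(2t_1)$ become redundant once the $2t_2$-conditions are imposed, leaving a regular sequence whose quotient is an affine space of dimension $t_1-t_2-1$.
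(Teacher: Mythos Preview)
Your argument for parts (1) and (2) is correct and is exactly what the paper does: compute each chart $\calU_{i_0}$, recognize it as (a product of an affine space with) a rank-one determinantal variety, and invoke the standard fact that such varieties are normal and Cohen--Macaulay. Your case split according to whether $i_0$ lies in the $X$-block or the $Y$-block is the natural way to unpack the citation to \cite{BV} that the paper gives.

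For part (3), however, you are overestimating the difficulty. The paper does not need to find new coordinates in which both sets of conditions are ``simultaneously transparent''; it simply works in the \emph{same} coordinates $(V_1,V_2,Z_1,Z_2)$ already used for the splitting model and superimposes the vanishing conditions obtained in the chart computations for (1) and (2). Concretely, the $\calY^\spl$-strata computation (your part (2)) forces $V_2=0$ and cuts $V_1$ down to its top block $V_{1,1}'$ of size $h-t_2$; the $\calZ^\spl$-strata computation (your part (1)) forces $Z_1=0$ and cuts $Z_2$ down to its bottom block $Z_{2,3}$ of size $t_1-h$. The crucial point is that once $V_2=0$, the matrix $V_{2,1}$ in the relation $\wedge^2(V_{2,1}\mid HZ_{2,3})=0$ vanishes identically, so the determinantal relation becomes vacuous and $Z_{2,3}$ is free. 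The chart is then literally $\Spec k[V_{1,1}',Z_{2,3}]/(v_{i_0}-1)\simeq \mathbb{A}_k^{(h-t_2)+(t_1-h)-1}=\mathbb{A}_k^{t_1-t_2-1}$. So your expectation that ``the $2t_2$-conditions make the $\wedge^2=0$ equations redundant'' is exactly right, and the mechanism is as simple as it could be: the $\calY$-conditions kill the entire first column of the $2\times(t_1-h)$ matrix whose minors you were worried about. No regular-sequence argument is needed; the intersection is an affine space on the nose.
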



For a discussion of the $\pi$-modular case, see Remark \ref{Excludemodular}. 
To study the global structure of the $\calZ^\spl$-strata and $\calY^\spl$-strata we establish a scheme-theoretic relationship between BT strata and (modified) Deligne-Lusztig varieties. This is carried out in \S \ref{GlobalPropertiesSection}. Also, due to the extensive notation we omit the discussion of the intersection of $\calZ^\spl$-strata and $\calY^\spl$-strata and we refer to \S \ref{IntZY}.

To give some more details, we first assume that the vertex lattice $\Lambda_1$ is of type $2t$ with $ t>h$ and consider the $k$-vector space $V_{\Lambda_1} = \Lambda_1^{\sharp}/\Lambda_1$ of dimension $2t$ with induced symplectic form $ \langle \,, \, \rangle  $.  Denote by $\Phi$ its Frobenius endomorphism. Let \( \mathrm{Gr}(i, V_{\Lambda_1}) \) be the Grassmannian variety parametrizing rank \( i \) locally direct summands of $ V_{\Lambda_1}$ and let $ \mathrm{SGr}(i, V_{\Lambda_1})$ be the subvariety of $ \mathrm{Gr}(i, V_{\Lambda_1})$ given by $\mathrm{SGr}(i, V_{\Lambda_1}) = \left\{ U \in \mathrm{Gr}(i, V_{\Lambda_1}) \ \middle| \  \langle U , U \rangle =0 \right\}.$ Consider the subvariety  $S'_{\Lambda_1}$ to be the subvariety of $\mathrm{SGr}(t-h, V_{\Lambda_1}) \times \mathrm{Gr}(t+h-1, V_{\Lambda_1})$ whose $\bar{k}$-points are 
\[
S'_{\Lambda_1}(\bar{k}) = \left\{ (U, U') \in \left(\mathrm{SGr}(t-h, V_{\Lambda_1}) \times \mathrm{Gr}(t+h-1, V_{\Lambda_1})\right)(\bar{k}) \;\middle|\; U' \subset U^{\sharp} \cap \Phi(U^{\sharp}) \right\}.
\]
Here $U^{\sharp}$ is the dual of $U$ with respect to the symplectic form $ \langle \,, \, \rangle  $ of $V_{\Lambda_1}$. Then the variety $S'_{\Lambda_1}$ is a projective subvariety of $\mathrm{SGr}(t-h, V_{\Lambda_1}) \times \mathrm{Gr}(t+h-1, V_{\Lambda_1})$. We prove that
\begin{Theorem}
The projective variety $ S'_{\Lambda_1}$ is irreducible and of dimension $t+h$, and is isomorphic to $\mathcal{Z}^{\rm spl}(\Lambda_1)$.     
\end{Theorem}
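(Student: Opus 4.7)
The plan is to construct an explicit isomorphism $\mathcal{Z}^{\rm spl}(\Lambda_1) \xrightarrow{\sim} S'_{\Lambda_1}$ by reading off linear-algebra data in $V_{\Lambda_1}$ from the Dieudonné module of a point of $\mathcal{Z}^{\rm spl}(\Lambda_1)$, and then to deduce the irreducibility and dimension of $S'_{\Lambda_1}$ from this isomorphism combined with the already-established local structure of the strata splitting model $\M^{\spl, [2h]}_n(2t)$.

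First, I would describe $\mathcal{Z}^{\rm spl}(\Lambda_1)(\bar{k})$ in terms of Dieudonné lattices. A $\bar k$-point records a lattice $M$ (together with its polarization and Frobenius $\Phi$) sandwiched between $\Lambda_1$ and $\Lambda_1^\sharp$, plus a rank-one direct summand $\mathrm{Fil}^0$ of the Hodge filtration satisfying the splitting conditions in Definition \ref{SplDef}. Passing to the quotient $V_{\Lambda_1} = \Lambda_1^\sharp/\Lambda_1$, I would take $U$ to be the image of $M$ and $U'$ to be the image of the preimage of $\mathrm{Fil}^0$ in $\Lambda_1^\sharp$; the type $2t$ condition and the signature $(n-1,1)$ force the dimensions $t-h$ and $t+h-1$, respectively, and the self-duality of $M$ combined with the hermitian form makes $U$ isotropic. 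This construction is functorial in $S \in \mathrm{Nilp}\, O_{\breve F}$ and yields a morphism into $\mathrm{SGr}(t-h, V_{\Lambda_1}) \times \mathrm{Gr}(t+h-1, V_{\Lambda_1})$.

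Next, I would check that this morphism lands in $S'_{\Lambda_1}$. The inclusion $U' \subset U^\sharp$ translates the compatibility of the Hodge filtration with the polarization, while $U' \subset \Phi(U^\sharp)$ captures Frobenius-stability of $M$; together with the splitting conditions $(\iota(\pi) \pm \pi)$, they cut $U'$ down to codimension one inside $U^\sharp \cap \Phi(U^\sharp)$. Conversely, from a pair $(U, U') \in S'_{\Lambda_1}$ I would reconstruct $M$ and $\mathrm{Fil}^0$ as the preimages of $U$ and $U'$ in $\Lambda_1^\sharp$, and verify that the splitting and signature conditions follow from the defining inclusions. The main obstacle is to upgrade this bijection of $\bar k$-points to a scheme-theoretic isomorphism; for that I would use the étale local isomorphism $\mathcal{Z}^{\rm spl}(\Lambda_1) \simeq \M^{\spl, [2h]}_n(2t)$ from Section \ref{LPBT} and match the explicit affine cover $\bigcup \calU_{i_0}$ of Proposition \ref{prop X-strata} with the analogous Grassmannian chart structure on $S'_{\Lambda_1}$, thereby producing the inverse morphism globally as well as verifying the isomorphism on local rings.

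Finally, irreducibility and dimension $t+h$ of $S'_{\Lambda_1}$ follow from the isomorphism together with the earlier theorem on strata splitting models, which states that $\M^{\spl, [2h]}_n(2t)$ is normal and Cohen-Macaulay of dimension $t+h$; connectedness of the chart cover then promotes normality to irreducibility. Independently, one can verify irreducibility and dimension intrinsically by projecting $S'_{\Lambda_1} \to \mathrm{SGr}(t-h, V_{\Lambda_1})$ and observing that the condition $\dim(U^\sharp \cap \Phi(U^\sharp)) \geq t+h-1$ carves out a (generalized) Deligne–Lusztig variety in the isotropic Grassmannian, over which the fibers are projective spaces of the expected dimension, adding up to $t+h$.
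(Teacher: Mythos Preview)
Your strategy aligns with the paper's in outline, but there are two concrete gaps in execution.

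First, the plan to promote the pointwise bijection to a scheme isomorphism by matching the affine charts $\calU_{i_0}$ of $\M^{\spl,[2h]}_n(2t)$ with Grassmannian charts on $S'_{\Lambda_1}$ does not work: the strata splitting model is linked to $\mathcal{Z}^{\rm spl}(\Lambda_1)$ only through the local model diagram of \S\ref{LPBT}, which has a $\scrG_{[2h,2t]}$-torsor in the middle, so there is no chart-for-chart identification of $\mathcal{Z}^{\rm spl}(\Lambda_1)$ with anything. The paper avoids constructing an inverse altogether. It defines the forward map $f_{\mathcal Z}$ functorially (Lemma~\ref{lm 64}), checks bijectivity on $\kappa$-points for \emph{all} fields $\kappa/\bar k$ (including imperfect ones, via displays), and concludes that $f_{\mathcal Z}$ is a proper monomorphism, hence a closed immersion (Theorem~\ref{Isom.X}); since both sides are reduced and the map is bijective on geometric points, the closed immersion is an isomorphism. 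A related detail: the correct assignment is $U(X)=\Phi^{-1}(M)/\breve\Lambda_1$, not $M/\breve\Lambda_1$; without this Frobenius twist the condition $U'\subset U^\sharp\cap\Phi(U^\sharp)$ does not correspond to the Dieudonn\'e relation $M'\subset\tau^{-1}(M^\sharp)\cap M^\sharp$ of Proposition~\ref{DSpl}.

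Second, the local model diagram transfers only \'etale-local properties (normality, Cohen--Macaulayness, dimension), so ``connectedness of the chart cover'' cannot yield global irreducibility of $\mathcal{Z}^{\rm spl}(\Lambda_1)$. Your backup plan---project $S'_{\Lambda_1}\to\mathrm{SGr}(t-h,V_{\Lambda_1})$ and analyze fibers over the Deligne--Lusztig locus---is essentially what the paper does: using the irreducibility of $S_{\Lambda_1}$ from \cite{HLS2} and Lemma~\ref{lm 61}, one shows $S'_{\Lambda_1}$ has a unique irreducible component of dimension $t+h$ (Corollary~\ref{Cor 62}); the closed immersion $f_{\mathcal Z}$ together with Corollary~\ref{Reducedness} then forces $\mathcal{Z}^{\rm spl}(\Lambda_1)$ to be irreducible and to coincide with that component, after which the remaining (potentially lower-dimensional) components of $S'_{\Lambda_1}$ are excluded via the irreducibility of the fibers over the $\Phi$-fixed locus (Proposition~\ref{S'irreducibility}).
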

Next, assume that the vertex lattice $\Lambda_2$ is of type $2t <2h$ and consider the $k$-vector space $V_{\Lambda_2^{\sharp}} = \Lambda_2 /\pi \Lambda^{\sharp}_2$ with induced orthogonal form $ ( \,, \, )  $. Let \( \mathrm{Gr}(i, V_{\Lambda^{\sharp}_2}) \) be the Grassmannian variety and let $ \mathrm{OGr}(i, V_{\Lambda^{\sharp}_2})$ be the subvariety of $ \mathrm{Gr}(i, V_{\Lambda_2^{\sharp}})$ given by $\mathrm{OGr}(i, V_{\Lambda^{\sharp}_2}) = \left\{ U \in \mathrm{Gr}(i, V_{\Lambda_2^{\sharp}}) \ \middle| \  ( U , U ) =0 \right\}.$ Consider the subvariety  $R'_{\Lambda_2^{\sharp}}$ to be the subvariety of $\mathrm{OGr}(h-t, V_{\Lambda^{\sharp}_2}) \times \mathrm{OGr}(h-t-1, V_{\Lambda^{\sharp}_2})$ whose $\bar{k}$-points are specified by
\[
R'_{\Lambda^{\sharp}_2}(\bar{k}) = \left\{ (U, U') \in \left(\mathrm{OGr}(h-t, V_{\Lambda^{\sharp}_2}) \times \mathrm{OGr}(h-t-1, V_{\Lambda^{\sharp}_2})\right)(\bar{k}) \;\middle|\; U' \subset U \cap \Phi(U) \right\}.
\]
\begin{Theorem}
The projective variety $R'_{\Lambda^{\sharp}_2}$ is irreducible and smooth of dimension $n-t-h-1$, and is isomorphic to $\calY^\spl(\Lambda_2^\sharp)$. 
\end{Theorem}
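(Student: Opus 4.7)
The plan is to produce an explicit scheme-theoretic isomorphism $\calY^\spl(\Lambda_2^\sharp) \cong R'_{\Lambda_2^\sharp}$ using the Dieudonn\'e-theoretic moduli description of the BT stratum, and then deduce irreducibility, smoothness, and dimension by combining this identification with the strata splitting model results (Theorem 1.3) transported along the local model diagram.

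First I would unpack the moduli data in parallel with the analogous $\calZ^\spl$-stratum statement. A geometric point of $\calY^\spl(\Lambda_2^\sharp)$ corresponds to a Dieudonn\'e lattice $M$ constrained by $\pi\Lambda_2^\sharp \otimes_{O_F} O_{\br F} \subset M \subset \Lambda_2^\sharp \otimes_{O_F} O_{\br F}$ (reflecting the closed condition that defines $\calY^\spl(\Lambda_2^\sharp)$), together with the splitting subspace $\mathrm{Fil}^0$ of the Hodge filtration. Reducing modulo $\pi\Lambda_2^\sharp$ extracts a flag in $V_{\Lambda_2^\sharp} = \Lambda_2/\pi\Lambda_2^\sharp$: an isotropic subspace $U$ of dimension $h-t$ from the image of the Hodge filtration, and an isotropic subspace $U' \subset U$ of dimension $h-t-1$ from $\mathrm{Fil}^0$. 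The splitting axiom $(\iota(\pi)-\pi)\mathrm{Fil}^0 = 0$, together with the compatibility of Frobenius with $\Phi$ on the quotient $\Lambda_2/\pi\Lambda_2^\sharp$, translates into the relation $U' \subset \Phi(U)$, so that $U' \subset U \cap \Phi(U)$, as required by the definition of $R'_{\Lambda_2^\sharp}$. A functorial inverse is then built by lifting any pair $(U,U') \in R'_{\Lambda_2^\sharp}(S)$ to a unique datum $(M, \mathrm{Fil}^0)$ satisfying the moduli conditions; verifying this construction is functorial in $S$ (the main technical task) upgrades the bijection on $\bar k$-points to an isomorphism of schemes.

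For the geometric properties, smoothness of $\calY^\spl(\Lambda_2^\sharp)$ of dimension $n-t-h-1$ in the non-$\pi$-modular case follows from Theorem 1.3(2) via the local model diagram relating $\calY^\spl(\Lambda_2^\sharp)$ to $\M^{\spl,[2h]}_n(2t)$, and transfers to $R'_{\Lambda_2^\sharp}$ through the isomorphism above. Irreducibility can be checked directly on $R'_{\Lambda_2^\sharp}$ via the first projection $\pi_1: R'_{\Lambda_2^\sharp} \to \mathrm{OGr}(h-t, V_{\Lambda_2^\sharp})$: its image is the Deligne-Lusztig-type locus $\{U : \dim(U \cap \Phi(U)) \geq h-t-1\}$, an irreducible subvariety of the orthogonal Grassmannian; generically $U' = U \cap \Phi(U)$ is uniquely determined, while over the $\Phi$-fixed locus the fiber jumps to $\mathbb{P}^{h-t-1}$, so the total space remains irreducible. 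The main obstacle I anticipate is controlling the splitting datum in families so that the bijection on $\bar k$-points extends to a scheme-theoretic isomorphism; this requires careful tracking of how the splitting condition interacts with the lattice chain $\pi\Lambda_2^\sharp \subset M \subset \Lambda_2^\sharp$ over arbitrary test schemes $S$, and an auxiliary argument to address the $\pi$-modular boundary case.
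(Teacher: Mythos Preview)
Your overall shape is right---construct a map $f_{\calY}:\calY^\spl(\Lambda_2^\sharp)\to R'_{\Lambda_2^\sharp}$ from the Dieudonn\'e data and argue separately for irreducibility via the projection to the locus $\{U:\dim(U\cap\Phi(U))\ge h-t-1\}$---but the logical order and the key technical step differ from the paper, and your version has a real gap.

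The paper does \emph{not} build a functorial inverse. Instead it first proves directly that $R'_{\Lambda_2^\sharp}$ is smooth and irreducible of dimension $n-t-h-1$ (Proposition~\ref{Rsmooth}), by realizing it as a transversal intersection of two smooth subvarieties inside $(\mathrm{OGr}(h-t)\times\mathrm{OGr}(h-t-1))^2$; transversality comes from the fact that Frobenius induces zero on tangent spaces. Only then does it turn to the map $f_{\calY}$: after checking bijectivity on $\kappa$-points for all (not just perfect) fields $\kappa$ via display theory, $f_{\calY}$ is a proper monomorphism, hence a closed immersion; since source and target are now known to be irreducible of the same dimension and the target is reduced, it is an isomorphism. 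So smoothness and dimension of $R'_{\Lambda_2^\sharp}$ are established \emph{before} and \emph{independently of} the identification with $\calY^\spl(\Lambda_2^\sharp)$, whereas you propose to transfer them from $\calY^\spl$ via the local model diagram.

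The gap in your route is exactly the step you flag: promoting the pointwise bijection to a scheme isomorphism by ``lifting $(U,U')$ to a unique $(M,\mathrm{Fil}^0)$ functorially in $S$''. Over a general base this requires producing the $p$-divisible group (not just a lattice) from the linear-algebra data, which is precisely what the closed-immersion argument avoids. Without an independent proof that $R'_{\Lambda_2^\sharp}$ is reduced and irreducible of the right dimension, your map being a closed immersion is not yet enough either. Two smaller inaccuracies: the relevant lattice chain is $\breve\Lambda_2^\sharp\subset M'\subset M^\sharp\subset\pi^{-1}\breve\Lambda_2$ (so $U\simeq\Phi^{-1}(M^\sharp)/\breve\Lambda_2^\sharp$ and $U'\simeq M'/\breve\Lambda_2^\sharp$), and $U'$ arises from $M'=\mathrm{Pr}^{-1}(\calF)$ rather than from $\mathrm{Fil}^0$ directly.
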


\smallskip

{\bf Acknowledgements:} We thank Y. Luo for his valuable comments and corrections on a preliminary version of this article. I.Z. was supported by Germany's Excellence Strategy EXC~2044--390685587 ``Mathematics M\"unster: Dynamics--Geometry--Structure'' and by the CRC~1442 ``Geometry: Deformations and Rigidity'' of the DFG.

\section{Rapoport-Zink spaces}
In this section, we present the definition and basic properties of certain ramified unitary Rapoport–Zink (RZ) spaces, with level structure given by the stabilizer of a vertex lattice. Since some of these spaces have already appeared in the literature, our discussion will be brief and accompanied by the relevant references.

\subsection{Preliminaries}\label{sec2.1}
Let $F_0$ be a finite extension of $\mathbb{Q}_p$, where $p$ is an odd prime, with residue field $k = \mathbb{F}_q$. Let $\bar{k}$ be a fixed algebraic closure of $k$ and $F$ a ramified quadratic extension of $F_0$. Denote by $a \mapsto \bar{a}$ the (nontrivial) Galois involution of $F/F_0$ and let $\pi$ be a uniformizer of $F$ such that $\bar{\pi} = - \pi$. Let $ \pi_0 = \pi^2$, a uniformizer of $F_0$. Denote by $\br F$ the completion of the maximal unramified extension of $F$ and let $O_F$, $O_{\br F}$ be the ring of integers of $F$, $\br F$ respectively. Denote by $\mathrm{Nilp} \, O_{\br F}$ the category of $O_{\br F}$-schemes $S$ such that $\pi$ is locally nilpotent on $S$ and for such an $S$ denote its special fiber $S \times_{\mathrm{Spf}\,  O_{\br F}} \Spec \bar{k}$  by $\bar{S}$. Let $\sigma \in \mathrm{Gal}(\br{F}_0/F_0)$ be the Frobenius element. We fix an injection of rings $i_0 : O_{F_0} \rightarrow O_{\br{F}_0}$ and an injection $i : O_F \rightarrow O_{\br{F}}$ extending $i_0$. Denote by $ \bar{i} : O_F \rightarrow O_{\br{F}}$ the map $ a \mapsto i(\bar{a})$.

A 
strict $O_{F_0}$-module over $S$, where $S$ is an $O_{F_0}$-scheme, 
is a pair $(X,\iota)$ where $X$ is a $p$-divisible group over $S$ and $\iota : O_{F_0} \longrightarrow \operatorname{End}(X)$ is an action such that $O_{F_0}$ acts on $\operatorname{Lie}(X)$ via the structure morphism $O_{F_0} \to \mathcal{O}_S$. Such an $O_{F_0}$-module is called \emph{formal} if the underlying $p$-divisible group $X$ is formal. By Zink-Lau’s theory, which is generalized in \cite{ACZ}, there is an equivalence of categories between the strict formal $O_{F_0}$-modules over $S$ and nilpotent $O_{F_0}$-displays over $S$ (see also \cite[\S 3.1]{HLS2} and \cite[\S 5]{LMZ} for more details).
 To any strict formal $O_{F_0}$-module, there is an associated crystal $\mathbb{D}_X$ on the category of $O_{F_0}$-pd-thickenings. We define the (covariant relative) de~Rham realization as $D(X) := \mathbb{D}_X(S)$ and by the (relative) Grothendieck--Messing theory we obtain a short exact sequence of $\mathcal{O}_S$-modules:
\[
0 \longrightarrow \operatorname{Fil}(X) \longrightarrow D(X) \longrightarrow \operatorname{Lie}(X) \longrightarrow 0,
\]
where $\operatorname{Fil}(X) \subset D(X)$ is the Hodge filtration. (See \cite[\S 3.1]{HLS2} for a more comprehensive treatment.)

Next, we restrict to the case where $X = (X, \iota)$ is biformal; see \cite[Definition~11.9]{Mih} for the definition. For a biformal strict $O_{F_0}$-module $X$, we can define the (relative) dual $X^\vee$ of $X$, and hence the (relative) polarization and the (relative) height. From the definition, it follows that there is a perfect pairing 
\begin{equation}\label{perf.pair.}
D(X) \times D(X^\vee) \to \mathcal{O}_S
\end{equation}
such that $\operatorname{Fil}(X) \subset D(X)$ and $\operatorname{Fil}(X^\vee) \subset D(X^\vee)$ are orthogonal complements of each other and there are two induced perfect pairings 
\[
\operatorname{Fil} (X) \times \operatorname{Lie}(X^\vee)\to \mathcal{O}_S \text{  and  } \operatorname{Fil} (X^\vee) \times \operatorname{Lie}(X)\to \mathcal{O}_S.
\]

When $S = \operatorname{Spec} R$ is perfect, the nilpotent $O_{F_0}$-display is equivalent to the relative Dieudonn\'e module $M(X)$ over $W_{O_{F_0}}(R)$, equipped with a $\sigma$-linear operator $F$ and a $\sigma^{-1}$-linear operator $V$, such that $FV = VF = \pi \cdot \mathrm{id}$. (Here, $W_{O_{F_0}}(R)$ is the ring of ramified Witt vectors and we refer the reader to \cite[\S 3.1]{HLS2} for more details.)
 
\begin{Definition}\label{def 21}
\rm{
 Let $h, n$ be integers with $0 \leq h \leq \lfloor \frac n2 \rfloor$. For any $S \in \mathrm{Nilp}\, O_{\br F}$, a \textit{hermitian} $O_F$-\textit{module} of rank $n$ and type $2h$ (with signature $(n-1, 1)$) over $S$ is a triple $(X, \iota, \lambda)$ satisfying:
\begin{enumerate}
  \item $X$ is a strict biformal $O_{F_0}$-module over $S$ of height $2n$ and dimension $n$.
  \item $\iota : O_F \to \operatorname{End}(X)$ is an action of $O_F$ on $X$ extending the $O_{F_0}$-action.
  \item $\lambda$ is a (relative) polarization of $X$ that is $O_F/O_{F_0}$-semilinear in the sense that the Rosati involution $\operatorname{Ros}_\lambda$ induces the non-trivial involution $\sigma \in \operatorname{Gal}(F/F_0)$ on $\iota : \mathcal{O}_F \to \operatorname{End}(X)$.
  \item We require that $\ker[\lambda] \subset X[\iota(\pi)]$ and has order $q^{2h}$.
\end{enumerate}
}
\end{Definition}
From (4) above, we deduce that there exists a unique isogeny $\lambda^\vee : X^\vee \to X$ such that $\lambda \circ \lambda^\vee = \iota(\pi)$ and $\lambda^\vee \circ \lambda = \iota(\pi)$.

\subsection{Unitary RZ spaces}\label{sec2.2} We fix a supersingular hermitian $O_F$-module $(\mathbb{X},\iota_{\mathbb{X}},\lambda_{\mathbb{X}})$ over $\Spec \bar{k}$ of rank $n$ and type $2h$ (with signature $(n-1, 1)$) which we call the framing object; supersingular means that the rational  Dieudonn\'e module $N = M(\mathbb{X})[1/\pi_0]$ has all relative slopes $\frac{1}{2}$. (We refer to \cite[\S 5]{LRZ} for the existence of these framing objects.) Now, we are ready to define the following RZ spaces which are \textit{relative} in the sense of \cite{Mih}. 

\begin{Definition}
{\rm 
\begin{enumerate}
    \item The wedge RZ space $\mathcal{N}_n^{\wedge}$ is the set-valued functor on $\mathrm{Nilp} \, O_{\breve{F}}$ which associates to $S \in \mathrm{Nilp} \, O_{\breve{F}}$ the set of isomorphism classes of quadruples $(X, \iota, \lambda, \rho)$ which satisfy
\begin{enumerate}
    \item  $(X, \iota, \lambda)$ is a hermitian  $O_F$-module over S of dimension $n$ and type $2h$.
    \item $\rho : X \times_S \bar{S} \to \mathbb{X} \times_{\bar{k}} \bar{S}$ is an $O_F$-linear quasi-isogeny of height $0$ over the special fiber $\bar{S} = S \times_{\mathrm{Spf}\,  O_{\br F}} \Spec \bar{k}$ such that $\rho^*(\lambda_{\mathbb{X}, \bar{S}}) = \lambda_{\bar{S}}$.
    \item The action of $O_F$ on $\operatorname{Fil}(X)$ induced by $\iota : O_F \to \operatorname{End}(X)$ satisfies:
 \begin{itemize}
  \item \textit{(Kottwitz condition)}: 
  $ \operatorname{char}(\iota(\pi) \mid \operatorname{Fil}(X)) = (T - \pi)(T + \pi)^{n-1}.$   \item \textit{(Wedge condition)}: 
 $\wedge^2(\iota(\pi) - \pi \mid \operatorname{Fil}(X)) = 0, \,  \wedge^n(\iota(\pi) + \pi \mid \operatorname{Fil}(X)) = 0.$ 
 \end{itemize}
 \item \textit{(Spin condition)} When $n$ is even and $2h = n$, we ask that $\iota(\pi) - \pi$ is non-vanishing on $\mathrm{Fil}(X)$.
\end{enumerate}

\item The RZ space $\mathcal{N}_n^{\rm loc}$ is defined as the closed formal subscheme of $\mathcal{N}_n^{\wedge}$ cut out by the ideal sheaf $\mathcal{O}_{\mathcal{N}_n^{\wedge}}[\pi_0^\infty] \subset \mathcal{O}_{\mathcal{N}_n^{\wedge}}$. This is the maximal flat closed formal subscheme of $\mathcal{N}_n^{\wedge}$.
\end{enumerate}
}\end{Definition}

The RZ spaces $\mathcal{N}_n^{\wedge}$ and $\mathcal{N}_n^{\rm loc}$ are representable by formal schemes locally of finite type over $\mathrm{Spf} \,O_{\br F}$ and both spaces have relative dimension $n - 1$ (see \cite[\S 3.3]{HLS2}). The closed formal subscheme $\mathcal{N}_n^{\rm loc}$ is flat and has the same underlying topological space with $\mathcal{N}_n^{\wedge}$, i.e. these spaces share identical reduced loci. These assertions can be easily seen by using the local model diagram and passing to the corresponding local models ${\rm M}_n^{\wedge}$ and ${\rm M}_n^{\rm loc}$ (see \cite[Proposition 3.4]{HLS2}). From \cite[\S 3.3]{HLS2}, we also see that $\mathcal{N}_n^{\rm loc}$ is a linear modification of $\mathcal{N}_n^{\wedge}$ in the sense of \cite[\S 2]{P}.
\begin{Definition}\label{SplDef}
\rm{
 The splitting RZ space $\mathcal{N}_n^{\rm spl}$ is the set-valued functor on $\mathrm{Nilp} \, O_{\breve{F}}$ which associates to $S \in \mathrm{Nilp} \, O_{\breve{F}}$ the set of isomorphism classes of quintuples $(X, \iota, \lambda, \rho,\operatorname{Fil}^0(X))$ which satisfy
\begin{enumerate}
 \item  $(X, \iota, \lambda)$ is a hermitian  $O_F$-module over $S$ of dimension $n$ and type $2h$.
    \item $\rho : X \times_S \bar{S} \to \mathbb{X} \times_{\bar{k}} \bar{S}$ is an $O_F$-linear quasi-isogeny of height $0$ over the special fiber $\bar{S}$ such that $\rho^*(\lambda_{\mathbb{X}, \bar{S}}) = \lambda_{\bar{S}}$.
    \item $\operatorname{Fil}^0(X)$ is locally a $\mathcal{O}_S$-direct summand of the Hodge filtration $\operatorname{Fil}(X) \subset D(X)$ of rank one that satisfies the \textit{splitting conditions} 
    \[  (\iota(\pi)+\pi) (\operatorname{Fil}(X)) \subset \operatorname{Fil}^0(X) \quad \text{and} \quad (\iota(\pi)-\pi) (\operatorname{Fil}^0(X))=0.
    \]  
    \item \textit{(Spin condition)} When $n$ is even and $2h = n$, we ask that $\iota(\pi) - \pi$ is non-vanishing on $\mathrm{Fil}(X)$.
\end{enumerate}
}
\end{Definition}
The RZ space $\mathcal{N}_n^{\rm spl}$ is representable by a flat formal scheme of relative dimension $n - 1$ over $\operatorname{Spf} O_{\breve F}$. The representability follows from the general results of \cite{RZbook}. Also, using the same arguments as in \cite[\S 5.2]{HLS1} we obtain a local model diagram  which connects $\mathcal{N}_n^{\rm spl}$ with the splitting model ${\rm M}_n^{\rm spl}$ defined in \cite{ZacZhao2}. By Proposition \ref{SplitFlat} we have that ${\rm M}_n^{\rm spl}$, and so $\mathcal{N}_n^{\rm spl}$, is normal and flat. In particular, as above, we have that $\mathcal{N}_n^{\rm spl}$ is a linear modification of  $\mathcal{N}_n^{\wedge}$ (see \cite[\S 8]{ZacZhao2}). As in loc. cit., there is a forgetful map 
\[
\phi: \mathcal{N}_n^{\rm spl} \rightarrow \mathcal{N}_n^{\wedge}
\]
defined by $(X, \iota, \lambda, \rho, \operatorname{Fil}^0(X)) \mapsto (X, \iota, \lambda, \rho) $ which factors through $\mathcal{N}_n^{\rm loc} \subset \mathcal{N}_n^{\wedge} $ because of flatness (see also \cite[\S 1.12.2]{LRZ}). 

\begin{Remarks}\label{Excludemodular}{\rm
\begin{enumerate}
    \item We note that in our definition of $\mathcal{N}_n^{\rm spl}$, which imitates the definition of ${\rm M}_n^{\rm spl}$, we add only one subspace ($\operatorname{Fil}^0(X)$) instead of adding two subspaces ($\operatorname{Fil}^0(X) $  and $\operatorname{Fil}^0(X^{\wedge}) $) as expected from \cite[Definition 14.1]{PR2}; this variation is necessary to get a flat model as observed in \cite{ZacZhao2}.
    \item In the $\pi$-modular case, i.e. $n$ is even and $2h =n$, the splitting model ${\rm M}_n^{\rm spl}$ is smooth and equals the local model $ {\rm M}_n^{\rm loc}$ (see \cite[Remark 5.12]{ZacZhao1}). 
    We exclude this case, as the basic locus of the corresponding RZ-space has already been described in \cite{Wu} and no new phenomena arise from our calculations. 
\end{enumerate}
}\end{Remarks}

\begin{Remark}\label{DualSpace}
{\rm
Denote by $\mathcal{F} \subset  \operatorname{Lie}(X^\vee)$ the perpendicular complement of $ \operatorname{Fil}^0(X)$ under the perfect pairing (\ref{perf.pair.}) which determine each other. Then, as in \cite[\S 3.2]{HLSY}, condition (3) of Definition \ref{SplDef} is equivalent to: $\mathcal{F}$ is a local direct summand of $\operatorname{Lie} X^\vee$ of rank $n - 1$ as an $\mathcal{O}_S$-module such that $\mathcal{O}_F$ acts on $\mathcal{F}$ via $O_F \xrightarrow{i} O_{\br{F}} \to \mathcal{O}_S$ and acts on $\operatorname{Lie} X^\vee / \mathcal{F}$ via $O_F \xrightarrow{\bar{i}} O_{\br{F}} \to \mathcal{O}_S$.
}\end{Remark}

Recall that we denote by $N = M(\mathbb{X})[1/\pi_0]$ the rational Dieudonn\'e module of the framing object which is a $2n$-dimensional $\breve{F}_0$-vector space equipped with a $\sigma$-linear operator $F$ and a $\sigma^{-1}$-linear operator $V$. 
The $O_F$-action $\iota_{\mathbb{X}} : O_F \to \operatorname{End}(\mathbb{X})$ induces on $N$ an $O_F$-action commuting with $F$ and $V$. We still denote this induced action by $\iota_{\mathbb{X}}$ and denote $\iota_{\mathbb{X}}(\pi)$ by $\Pi$.

The polarization of $\mathbb{X}$ induces a skew-symmetric $\breve{F}_0$-bilinear form $\langle \cdot, \cdot \rangle$ on $N$ satisfying
\[
\langle Fx, y \rangle = \langle x, Vy \rangle^\sigma, \qquad \langle \iota(a)x, y \rangle = \langle x, \iota(\bar{a})y \rangle,
\]
for any $x, y \in N$, $a \in O_F$. Also, $N$ is an $n$-dimensional $\breve{F}$-vector space equipped with the $F/\breve{F}_0$-hermitian form $h(\cdot, \cdot)$ defined by
\[
h(x, y) := \delta \left( \langle \Pi x, y \rangle + \pi \langle x, y \rangle \right),
\]
where $\delta$ is a fixed element in $\breve{F}_0^\times$ satisfying $\sigma(\delta) = -\delta$. The bilinear form $\langle \cdot, \cdot \rangle$ can be recovered from $h(\cdot, \cdot)$ via the relation:
\[
\langle x, y \rangle = \frac{1}{2\delta} \operatorname{Tr}_{F/\breve{F}_0} \left( \pi^{-1} h(x, y) \right).
\]
For a lattice $\Lambda \subset N$ we denote by $ \Lambda^\sharp = \{x \in N \,|\, h(x,\Lambda) \in O_F \}$ its hermitian dual. Let $\tau := \Pi V^{-1}$ and define $C := N^{\tau = 1}$ (the set of $\tau$-fixed points in $N$). Then $C$ is an $F$-vector space of dimension $n$ and we have
\[
N = C \otimes_{F_0} \breve{F}_0.
\]
The $F/F_0$-hermitian form $h(\cdot, \cdot)$ restricts to $C$ and we continue to use the same notation for the restricted form. From now on, we write $\pi$ instead of $\Pi$ for the action on $C$. We call $\Lambda\subset C$ a vertex lattice if it satisfies 
\begin{equation}
\pi \Lambda^{\sharp} \subset \Lambda \subset  \Lambda^{\sharp}.   
\end{equation}
We denote by $t(\Lambda):=\dim(\Lambda^\sharp/\Lambda)$ the type of a vertex lattice, which is an even integer (see \cite[Lemma 3.2]{RTW}). Also, set $\breve{\Lambda} = \Lambda \otimes_{O_F} O_{\breve{F}}$. 

\begin{Proposition}\label{DLoc}
Let $ \kappa$ be a perfect field over $ \bar{k}$. There is a bijection between $\mathcal{N}_n^{\rm loc}(\kappa)$ and the set of $W_{O_{F_0}}(\kappa)$-lattices
{\small
\[
\left\{ 
M \subset N \otimes W_{O_{F_0}}(\kappa) \,\middle|\, 
\Pi M^\sharp \subset M \mathrel{\overset{\scriptstyle 2h}{\subset}} M^\sharp,\ 
\Pi M \subset \tau^{-1}(M) \subset \Pi^{-1} M,\ 
M \mathrel{\overset{\scriptstyle \leq 1}{\subset}} M + \tau(M) 
\right\}.
\]
}
\end{Proposition}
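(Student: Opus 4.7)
My plan is to use relative Dieudonné theory to translate the RZ moduli problem into lattice conditions, along the lines of the classical descriptions of unitary RZ points (as in \cite{RTW,HLS2}).

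\textbf{Setup.} For $\kappa$ a perfect field over $\bar{k}$, the Zink--Lau equivalence (in the version of \cite{ACZ} recalled in \S\ref{sec2.1}) sends a strict biformal $O_{F_0}$-module $X$ to its relative Dieudonné module $M(X)$ over $W_{O_{F_0}}(\kappa)$, equipped with $\sigma$-linear $F$ and $\sigma^{-1}$-linear $V$ satisfying $FV=VF=\pi_0$. Under this equivalence, the $O_F$-action and the polarization $\lambda$ endow $M := M(X)$ with an action of $\Pi = \iota(\pi)$ (commuting with $F$ and $V$) and with a perfect form. The height-zero quasi-isogeny $\rho$ produces an isomorphism of isocrystals $M[1/\pi_0] \cong N\otimes_{\breve F_0} W_{O_{F_0}}(\kappa)[1/\pi_0]$ compatible with $F$, $V$, $\Pi$ and the form, allowing us to view $M$ as a full-rank $W_{O_{F_0}}(\kappa)$-lattice inside $N_\kappa := N\otimes W_{O_{F_0}}(\kappa)$.

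\textbf{Translation of the three conditions.} First, the polarization in Definition \ref{def 21}(3)--(4) directly translates to $\Pi M^{\sharp}\subset M \subset M^{\sharp}$ with $M^{\sharp}/M$ of length $2h$, where $M^{\sharp}$ denotes the hermitian dual of $M$ in $N_\kappa$; the inclusion $\Pi M^\sharp \subset M$ encodes $\ker[\lambda]\subset X[\iota(\pi)]$, while the length condition encodes the type $2h$. Second, since $\Pi$ commutes with $V$, we have $\tau^{-1} = V\Pi^{-1} = \Pi^{-1}V$, so the standard Dieudonné conditions $V(M)\subset M$ and $\pi_0 M \subset V(M)$ (the latter equivalent to $F(M)\subset M$ via $VF=\pi_0$) become $\tau^{-1}(M)\subset \Pi^{-1}M$ and $\Pi M \subset \tau^{-1}(M)$ respectively, giving the second condition. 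Third, the Hodge filtration is $\operatorname{Fil}(X) = V(M)/\pi_0 M \subset M/\pi_0 M$, with $\Pi$ acting and $\Pi^2 \equiv 0$; the Kottwitz, wedge and spin conditions—which together cut out the flat locus $\mathcal{N}_n^{\rm loc}$ inside $\mathcal{N}_n^\wedge$—force the cokernel of $M \hookrightarrow M + \Pi V^{-1}(M) = M + \tau(M)$ to have $\kappa$-length at most one, yielding the third condition. The reverse implications use that these lattice data reconstruct the Dieudonné module, and hence the quadruple $(X,\iota,\lambda,\rho)$, via Zink--Lau.

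\textbf{Main obstacle and strategy.} The subtle step is the third translation, where one must see that the combined Kottwitz/wedge/spin condition is captured by the single inequality $\operatorname{length}((M+\tau(M))/M)\leq 1$ rather than by two separate constraints on eigenspaces of $\Pi\pm\pi$. The cleanest way to handle this is to pass through the local model diagram connecting $\mathcal{N}_n^{\rm loc}$ with ${\rm M}_n^{\rm loc}$: on the one hand, the local model already has an explicit lattice-theoretic description of its $\kappa$-points from \cite[Proposition 3.4]{HLS2}; on the other hand, the flatness of $\mathcal{N}_n^{\rm loc}$ guarantees that over the perfect field $\kappa$ no further conditions are needed. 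Combining these with the identification between the Hodge filtration and $V(M)/\pi_0 M$ gives the last condition. Thus the bijection follows by assembling the three translations.
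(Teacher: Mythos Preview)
The paper does not give its own proof here; it simply refers to \cite[Proposition 3.5]{HLS2}, and your outline of the Dieudonn\'e-theoretic translation is essentially the argument carried out in that reference. Two small points are worth correcting. First, the Kottwitz/wedge/spin conditions cut out $\mathcal{N}_n^\wedge$, not $\mathcal{N}_n^{\rm loc}$; the reason their $\kappa$-points agree for any field $\kappa$ is that the two formal schemes share the same underlying reduced subscheme (as stated right after the definition of $\mathcal{N}_n^{\rm loc}$), not flatness. Second, your appeal to the local model diagram and ``\cite[Proposition 3.4]{HLS2}'' is essentially a forward reference to the very statement being proved (which is Proposition~3.5 there), so the direct translation in your second paragraph is really the substance: over $\kappa$ the wedge condition becomes $\operatorname{rank}(\Pi\mid VM/\pi_0 M)\le 1$, and one checks this is equivalent to $\operatorname{length}((M+\tau(M))/M)\le 1$ by applying $V$ and using that $VM$ and $\Pi M$ have the same colength $n$ in $M$.
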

\begin{proof}
See \cite[Proposition 3.5]{HLS2}. 
\end{proof}

\begin{Proposition}\label{DSpl}
Let $ \kappa$ be a perfect field over $ \bar{k}$. There is a bijection between $\mathcal{N}_n^{\rm spl}(\kappa)$ and the set of pairs of $W_{O_{F_0}}(\kappa)$-lattices $(M,M')$ in $N \otimes W_{O_{F_0}}(\kappa)$ satisfying
\[
\begin{array}{l}
\Pi M^\sharp \subset M \mathrel{\overset{\scriptscriptstyle 2h}{\subset}} M^\sharp,\quad 
\Pi M \subset \tau^{-1}(M) \subset \Pi^{-1} M,   \\
V M^\sharp \subset M' \subset \tau^{-1}(M^\sharp) \cap M^\sharp,\quad
\operatorname{length}(M^\sharp / M') = 1.     
\end{array}
\]

\end{Proposition}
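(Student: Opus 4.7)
The plan is to extend the proof of Proposition \ref{DLoc} by encoding the extra splitting datum $\operatorname{Fil}^0(X)$ as a second lattice $M'$. For a perfect field $\kappa/\bar k$, covariant (relative) Dieudonn\'e theory over $W_{O_{F_0}}(\kappa)$ bijectively assigns to $(X, \iota, \lambda, \rho)$ a lattice $M \subset N \otimes W_{O_{F_0}}(\kappa)$, and the first two sets of conditions in our statement---the polarization/type condition $\Pi M^\sharp \subset M \mathrel{\overset{\scriptscriptstyle 2h}{\subset}} M^\sharp$ and the Kottwitz/wedge condition $\Pi M \subset \tau^{-1}(M) \subset \Pi^{-1}M$---come directly from Proposition \ref{DLoc}. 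Thus it remains to parametrize the valid choices of $\operatorname{Fil}^0(X)$ over a fixed $M$ by a second lattice satisfying the remaining two conditions.

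To do this, I would first invoke Remark \ref{DualSpace} to replace $\operatorname{Fil}^0(X)$ by a locally direct $\mathcal{O}_S$-summand $\mathcal{F} \subset \operatorname{Lie}(X^\vee)$ of rank $n-1$, with the prescribed $O_F$-actions on $\mathcal{F}$ and on $\operatorname{Lie}(X^\vee)/\mathcal{F}$ via $i$ and $\bar{i}$ respectively. Identifying $M(X^\vee) = M^\sharp$ via the polarization gives $\operatorname{Lie}(X^\vee) = M^\sharp/VM^\sharp$ (covariantly), so rank-$(n-1)$ locally direct summands $\mathcal{F}$ correspond bijectively to $W_{O_{F_0}}(\kappa)$-lattices $M'$ with $VM^\sharp \subset M' \subset M^\sharp$ and $\operatorname{length}(M^\sharp/M') = 1$, via $\mathcal{F} = M'/VM^\sharp$. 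Since $\pi \equiv 0$ in $\kappa$, the $O_F$-action condition on $\mathcal{F}$ reduces to $\Pi \mathcal{F} = 0$, i.e.\ $\Pi M' \subset VM^\sharp$; using $V = \Pi \tau^{-1}$ (from $\tau = \Pi V^{-1}$), this rewrites as $M' \subset \tau^{-1}(M^\sharp)$, which together with $M' \subset M^\sharp$ gives the stated $M' \subset \tau^{-1}(M^\sharp) \cap M^\sharp$. The parallel condition on the one-dimensional quotient $\operatorname{Lie}(X^\vee)/\mathcal{F} = M^\sharp/M'$, namely that $\Pi$ act as zero, will be automatic: $\Pi^2 = \pi_0 = 0$ in $\kappa$ makes $\Pi$ nilpotent, hence zero on a one-dimensional quotient.

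The hard part will be carefully tracking the translation through the chain Remark \ref{DualSpace} $\to$ polarization $\to$ covariant Dieudonn\'e theory $\to$ lattices, and verifying that the stated lattice conditions on $(M,M')$ capture the moduli data of $\mathcal{N}_n^{\rm spl}(\kappa)$ exactly---no missing nor extraneous constraints---using the relations $\Pi^2 = \pi_0$, $\tau = \Pi V^{-1}$, $FV = VF = \pi_0$ together with the $\tau$-compatibility $\tau(L^\sharp) = \tau(L)^\sharp$ of the hermitian form (which follows from the $\sigma$-semilinearity of $\tau$ for $h$). One should also observe that the wedge condition $M \mathrel{\overset{\scriptscriptstyle \leq 1}{\subset}} M + \tau(M)$ appearing in Proposition \ref{DLoc} becomes automatic in the splitting setting, being forced by the existence of a valid $M'$ with the listed properties, which is consistent with its omission from the Proposition \ref{DSpl} list.
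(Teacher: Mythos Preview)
Your proposal is correct and follows essentially the same route as the paper's proof: both pass through Remark \ref{DualSpace} to encode $\operatorname{Fil}^0(X)$ as $\mathcal{F}\subset\operatorname{Lie}(X^\vee)$, set $M'=\operatorname{Pr}^{-1}(\mathcal{F})\subset M^\sharp$, and translate the splitting conditions into $VM^\sharp\subset M'\subset\tau^{-1}(M^\sharp)\cap M^\sharp$ with $\operatorname{length}(M^\sharp/M')=1$, using $V=\Pi\tau^{-1}$ to rewrite $\Pi M'\subset VM^\sharp$. Your explicit remark that the wedge-type condition $M\overset{\le 1}{\subset} M+\tau(M)$ from Proposition \ref{DLoc} is forced by the existence of $M'$ is a point the paper leaves implicit, but otherwise the arguments coincide.
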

\begin{proof}
Let $(X, \iota, \lambda, \rho, \mathcal{F}) \in \mathcal{N}_n^{\rm spl}(\kappa) $ and let $M(X)$ be the $O_{F_0}$-relative Dieudonné module of $X$. Define $M = \rho(M(X)) \subset N  \otimes W_{O_{F_0}}(\kappa)$ and $M' = \rho(\operatorname{Pr}^{-1}(\mathcal{F})) \subset N  \otimes W_{O_{F_0}}(\kappa)$, where $\operatorname{Pr} : M(X^\vee) \to \operatorname{Lie} X^\vee = M(X^\vee)/V M(X^\vee)$ is the natural quotient map.

As in the proof of \cite[Proposition 3.5]{HLS2} the relation $\pi M^\sharp \subset M \mathrel{\overset{\scriptscriptstyle 2h}{\subset}} M^\sharp$ comes from the polarization $\lambda$ and the relation $\Pi M \subset \tau^{-1}(M) \subset \Pi^{-1} M$ is equivalent to $ \pi_0 M \subset VM \subset M $. (Note that the Hodge filtration $\operatorname{Fil}(X) \subset D(X)$ can be identified with $ VM / \pi_0M \subset M / \pi_0M$.) The conditions $V M^\sharp \subset M' \subset \tau^{-1}(M^\sharp) \cap M^\sharp$ and $\operatorname{length}(M^\sharp/M') = 1$ are equivalent to
\[V M^\sharp \subset M' \subset M^\sharp, \quad \Pi M' \subset V M^\sharp, \quad \dim_{\kappa}(M^\sharp/M') = 1,
\] 
which are in turn equivalent to 
\[
\mathcal{F} \subset \operatorname{Lie} X^\vee, \quad \dim_{\kappa}(\mathcal{F}) = n - 1. \quad \Pi \cdot \mathcal{F} = \{0\}, \quad \Pi \cdot \operatorname{Lie} X^\vee \subset \mathcal{F} .
\]
Notice that the condition $\Pi \cdot \operatorname{Lie} X^\vee  \subset \mathcal{F}$ is automatic once we know $\dim_{\kappa}(\mathcal{F}) = n - 1$ and $\mathcal{F}$ is stable under the action of $\Pi$ (see also the proof of \cite[Proposition 3.5]{HLSY}). Hence the filtration $\mathcal{F} \subset \operatorname{Lie} X^\vee $ satisfies the splitting conditions. By combining Remark \ref{DualSpace} with the above, we have translated all conditions in the definition of $\mathcal{N}_n^{\rm spl}$ in terms of relative Dieudonné modules.
\end{proof}

\subsection{Bruhat-Tits strata}\label{BT_strata} 
Let $\mathcal{N}_1$ be the height two relative Rapoport-Zink space
with strict $O_F$-action and we fix the framing object $ (\mathbb{Y}, \iota_{\mathbb{Y}}, \lambda_{\mathbb{Y}})$ of dimension one over $\Spec \bar{k}$.  Define 
\begin{equation} \label{eq:Vn-def}
\mathbb{V} := \operatorname{Hom}_{O_F}(\mathbb{Y}, \mathbb{X}) \otimes_{O_F} F.
\end{equation}
The vector space $\mathbb{V} $ is equipped with a hermitian form $(\, ,\, )_{\mathbb{V}} $ such that for any $x, y \in \mathbb{V} $,
\begin{equation} \label{eq:hermitian-V}
(x, y)_{\mathbb{V} } = \lambda_{\mathbb{Y}}^{-1} \circ y^\vee \circ \lambda_{\mathbb{X}} \circ x \in \operatorname{End}(\mathbb{Y}) \otimes_{O_F} F \cong F, 
\end{equation}
where $y^\vee$ is the dual quasi-homomorphism of $y$ and the last isomorphism is given by $\iota^{-1}_{\mathbb{Y}}$. The hermitian spaces $(\mathbb{V}, (\, ,\, )_{\mathbb{V}})$ and $(C, h(\,, \,))$ are related by the $F$-linear isomorphism
\begin{equation} \label{eq:b-isom}
b : \mathbb{V} \to C, \quad x \mapsto x(e),
\end{equation}
where $e$ is a generator of the $\tau$-fixed points of the $O_{F_0}$-relative Dieudonné module $M(\mathbb{Y})$; in particular, $\mathbb{V}$ and $C$ are isomorphic as hermitian spaces (see \cite[\S 2.2]{HLSY}). We will sometimes identify $\mathbb{V}$ with $C$.

\quash{\begin{Definition}\label{Def1}
  \rm{
(1) For an $O_F$ lattice $L \subset \mathbb{V}$, define the subfunctor $\mathcal{Z}(L)$ of $ \mathcal{N}_n^{\rm loc}$ such that $\mathcal{N}_n^{\rm loc}(S)$ is the set of isomorphism classes of tuples $ (X, \iota, \lambda, \rho) \in \mathcal{N}_n^{\rm loc}(S)$ such that for any $x \in L \subset \mathbb{V}$ the quasi-homomorphism $ \rho^{-1} \circ x \circ \rho_Y: \mathbb{Y} \times_{\bar{k}} \bar{S} \to X \times_S \bar{S}$ extends to a homomorphism $ Y \to X $.

(2) For an $O_F$ lattice $L \subset \mathbb{V}$, define the subfunctor $\mathcal{Y}(L^{\sharp})$ of $ \mathcal{N}_n^{\rm loc}$ such that $\mathcal{N}_n^{\rm loc}(S)$ is the set of isomorphism classes of tuples $ (X, \iota, \lambda, \rho) \in \mathcal{N}_n^{\rm loc}(S)$ such that for any $x^{^\sharp} \in \Lambda^{\sharp} \subset \mathbb{V}$ the quasi-homomorphism $ \rho^\vee \circ \lambda_{\mathbb{X}} \circ x^{^\sharp} \circ \rho_Y: \mathbb{Y} \times_{\bar{k}} \bar{S} \to X^\vee \times_S \bar{S}$ extends to a homomorphism $ Y \to X^\vee $.
}
\end{Definition}}

By (relative) Dieudonn\'e theory, the lattices $\breve{\Lambda}$ and $\breve{\Lambda}^\sharp$ correspond to the strict $O_{\breve{F}_0}$-modules $X_\Lambda$ and $X_{\Lambda^\sharp}$ over $\bar{k}$, respectively, with quasi-isogenies $\rho_\Lambda : X_\Lambda \to \mathbb{X}$ and $\rho_{\Lambda^\sharp} : X_{\Lambda^\sharp} \to \mathbb{X}$. We define the following two kinds of Bruhat--Tits (BT) strata for the special fiber $\overline{\mathcal{N}}_n^{\rm loc}$  of $\mathcal{N}_n^{\rm loc}$
(see also \cite[\S 3.3]{HLSY} and \cite[Definition 2.2]{HLS2}):


\begin{Definition}\label{Isogenies}
\rm{
Fix an even integer $0\leq 2h\leq n$. Let $L_{\mathcal{Z}}$ denote the set of all vertex lattices in $C$ of type $2t \geq 2h$, and let $L_{\mathcal{Y}}$ denote the set of all vertex lattices in $C$ of type $2t\leq 2h$. 

\begin{enumerate}
  \item For any $\Lambda \in L_{\mathcal{Z}}$, the $\mathcal{Z}$-stratum $\mathcal{Z}^{\rm loc}(\Lambda)$ is the subfunctor of $\overline{\mathcal{N}}_n^{\rm loc}$ that assigns to each $\bar{k}$-scheme $S$ the set of tuples $(X, \iota, \lambda, \rho)$ such that the composition $\rho_{\Lambda,X} := \rho^{-1} \circ (\rho_\Lambda)_S$ is an isogeny.

  \item For any $\Lambda \in L_{\mathcal{Y}}$, the $\mathcal{Y}$-stratum $\mathcal{Y}^{\rm loc}(\Lambda^\sharp)$ is the subfunctor of $\overline{\mathcal{N}}_n^{\rm loc}$ that assigns to each $\bar{k}$-scheme $S$ the set of tuples $(X, \iota, \lambda, \rho)$ such that the composition $\rho_{\Lambda^\sharp, X^\vee} := \rho^\vee \circ \lambda_{\mathbb{X}} \circ \rho_{\Lambda^\sharp}$ is an isogeny, where $\rho_{\Lambda^\sharp} = \rho_\Lambda \circ \lambda_\Lambda^{-1}$.
\end{enumerate}
}
\end{Definition}
By \cite[Lemma 2.10]{RZbook}, $\mathcal{Z}^{\rm loc}(\Lambda)$ and $\mathcal{Y}^{\rm loc}(\Lambda^\sharp)$ are closed formal subschemes of $\overline{\mathcal{N}}_n^{\rm loc}$ (see also \cite[\S 2.1]{HLS2}). Using the same reasoning as in \cite[Lemma 4.2]{VW}, it follows that they are representable by projective schemes over $\bar{k}$. Also, these schemes are reduced (see \cite[Corollary 4.8]{HLS2}) and so they lie in the reduced subscheme $  \calN_{n, {\rm red}}^{\rm loc}$ of $  \overline{\mathcal{N}}_n^{\rm loc}$.
\quash{
\begin{Definition}\label{Defsplcycles}
  \rm{
(1) For an $O_F$ lattice $L \subset \mathbb{V}$, define the closed subfunctor $\mathcal{Z}'(L)$ of $ \mathcal{N}_n^{\rm spl}$ such that $\mathcal{N}_n^{\rm spl}(S)$ is the set of isomorphism classes of tuples $ (X, \iota, \lambda, \rho, \calF) \in \mathcal{N}_n^{\rm spl}(S)$ such that for any $x \in L \subset \mathbb{V}$ the quasi-homomorphism $ \rho^{-1} \circ x \circ \rho_Y: \mathbb{Y} \times_{\bar{k}} \bar{S} \to X \times_S \bar{S}$ extends to a homomorphism $ Y \to X $.

(2) For an $O_F$ lattice $L \subset \mathbb{V}$, define the closed subfunctor $\mathcal{Y}'(L^{\sharp})$ of $ \mathcal{N}_n^{\rm spl}$ such that $\mathcal{N}_n^{\rm spl}(S)$ is the set of isomorphism classes of tuples $ (X, \iota, \lambda, \rho, \calF) \in \mathcal{N}_n^{\rm spl}(S)$ such that for any $x^{^\sharp} \in \Lambda^{\sharp} \subset \mathbb{V}$ the quasi-homomorphism $ \rho^\vee \circ \lambda_{\mathbb{X}} \circ x^{^\sharp} \circ \rho_Y: \mathbb{Y} \times_{\bar{k}} \bar{S} \to X^\vee \times_S \bar{S}$ extends to a homomorphism $ Y \to X^\vee $.
}
\end{Definition}
}

Next, we define the corresponding strata for the special fiber of the splitting RZ-space $ \overline{\mathcal{N}}_n^{\rm spl}$. 
As in \cite[\S 3.2]{HLSY}, for a vertex lattice $\Lambda\subset \mathbb{V}$, we have that for each $\bar{k}$-scheme $S$: 
\begin{enumerate}
    \item $\mathcal{Z}^{\rm spl}(\Lambda) (S) $ is the set of isomorphism classes of tuples $(X, \iota, \lambda, \rho, \mathcal{F}) \in \overline{\mathcal{N}}_n^{\rm spl} (S)$ such that $(X, \iota, \lambda, \rho) \in \mathcal{Z}^{\rm loc}(\Lambda)(S)$ and if $\Lambda$ is of type $2t \neq 2h $, we require in addition that $ x_{*} (\operatorname{Lie}(Y\times S)) \subset \mathcal{F}$ for any $x \in \Lambda$. 
    \item $\mathcal{Y}^{\rm spl}(\Lambda^\sharp) (S)$ is the set of isomorphism classes of tuples $(X, \iota, \lambda, \rho, \mathcal{F}) \in \overline{\mathcal{N}}_n^{\rm spl}(S) $ such that $(X, \iota, \lambda, \rho) \in \mathcal{Y}^{\rm loc}(\Lambda^\sharp)(S)$ and if $\Lambda$ is of type $2t \neq 2h $, we require in addition that $ x^{\sharp}_{*} (\operatorname{Lie}(Y\times S)) \subset \mathcal{F}$ for any $x^{\sharp} \in \Lambda^{\sharp}$. 
\end{enumerate}
Note that in Corollary \ref{Reducedness} we will show that the moduli functors $\mathcal{Z}^{\rm spl}(\Lambda) $ and  $\mathcal{Y}^{\rm spl}(\Lambda^{\sharp}) $ are reduced and thus they lie in the reduced subscheme $\calN_{n, {\rm red}}^\spl$ of $\overline{\mathcal{N}}_n^{\rm spl}$. Using Propositions \ref{DLoc} and \ref{DSpl} we can naturally obtain a lattice-theoretic characterization of BT-strata for $\overline{\mathcal{N}}_n^{\rm spl}$ and $ \overline{\mathcal{N}}_n^{\rm loc}$:
\begin{Proposition}\label{DieudLatt}
Let $ \kappa$ be a perfect field over $\bar{k}$. The $\kappa$-points of the BT-strata can be described as follows: 

(1) Assume $\Lambda\subset C$ is a vertex lattice of type $2t \geq 2h$.
\begin{itemize}
    \item For $t= h$, we have
    \begin{flalign*}
    & \mathcal{Z}^{\rm loc}(\Lambda)(\kappa) = \left\{ (X, \iota, \lambda, \rho) \in \mathcal{N}_n^{\rm loc}(\kappa) \mid \Lambda \otimes W_{O_{F_0}}(\kappa) = M(X)  \right\}, \\  
    & \mathcal{Z}^{\rm spl}(\Lambda)(\kappa) = \left\{ (X, \iota, \lambda, \rho,\mathcal{F}) \in \mathcal{N}^{\rm spl}_{n}(\kappa) \,\middle|\, \Lambda \otimes W_{O_{F_0}}(\kappa) = M(X)  \right\}.
    \end{flalign*}
   \item For $t > h$, we have that $ \mathcal{Z}^{\mathrm{loc}}(\Lambda)(\kappa)$ is equal to
   \[  
  \left\{ (X, \iota, \lambda, \rho) \in \mathcal{N}_n^{\rm loc}(\kappa) \;\middle|\;
  \Lambda \otimes W_{O_{F_0}}(\kappa) \subset M(X) \subset M(X)^\sharp \subset \Lambda^\sharp \otimes W_{O_{F_0}}(\kappa)
  \right\}
   \]
and $\mathcal{Z}^{\mathrm{spl}}(\Lambda)(\kappa) $ is equal to 
\[
\left\{ (X, \iota, \lambda, \rho, \mathcal{F}) \in \mathcal{N}^{\mathrm{spl}}_{n}(\kappa) \;\middle|\;
  \begin{aligned}
    &(X, \iota, \lambda, \rho) \in \mathcal{Z}^{\mathrm{loc}}(\Lambda)(\kappa), \\
    &\Lambda \otimes W_{O_{F_0}}(\kappa) \subset M'(X) \subset M(X)^\sharp
  \end{aligned}
  \right\}.
\]
\end{itemize}

(2) Assume $\Lambda\subset C$ is a vertex lattice of type $2t \leq 2h$.
\begin{itemize}
    \item For $t=h$, we have
    \begin{flalign*}
   &  \mathcal{Y}^{\rm loc}(\Lambda^{\sharp})(\kappa) = \left\{ (X, \iota, \lambda, \rho) \in \mathcal{N}_n^{\rm loc}(\kappa) \mid \Lambda \otimes W_{O_{F_0}}(\kappa) = M(X)  \right\}, \\  
  &   \mathcal{Y}^{\rm spl}(\Lambda^{\sharp})(\kappa) = \left\{ (X, \iota, \lambda, \rho,\mathcal{F}) \in \mathcal{N}^{\rm spl}_{n}(\kappa) \mid \Lambda \otimes W_{O_{F_0}}(\kappa) = M(X)  \right\}.
    \end{flalign*}
\item For $t < h$, we have that $ \mathcal{Y}^{\mathrm{loc}}(\Lambda^\sharp)(\kappa)$ is equal to
\[
\left\{ (X, \iota, \lambda, \rho) \in \mathcal{N}_n^{\rm loc}(\kappa) \;\middle|\;
  \pi \Lambda^\sharp \otimes W_{O_{F_0}}(\kappa) \subset \pi M(X)^\sharp \subset M(X) \subset \Lambda \otimes W_{O_{F_0}}(\kappa)
  \right\}
\]
and $ \mathcal{Y}^{\mathrm{spl}}(\Lambda^\sharp)(\kappa)$ is equal to
\[
\left\{ (X, \iota, \lambda, \rho, \mathcal{F}) \in \mathcal{N}^{\mathrm{spl}}_n(\kappa) \;\middle|\;
  \begin{aligned}
    &(X, \iota, \lambda, \rho) \in \mathcal{Y}^{\mathrm{loc}}(\Lambda^\sharp)(\kappa), \\
    &\Lambda^\sharp \otimes W_{O_{F_0}}(\kappa) \subset M'(X) \subset M(X)^\sharp
  \end{aligned}
  \right\}.
\]
\end{itemize}
\qed
\end{Proposition}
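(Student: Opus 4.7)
The plan is to translate each defining condition of the Bruhat--Tits strata into a lattice condition on $M(X)$ and $M'(X)$ via covariant Dieudonn\'e theory, and then to intersect with the lattice-theoretic descriptions of $\mathcal{N}_n^{\rm loc}(\kappa)$ and $\mathcal{N}_n^{\rm spl}(\kappa)$ supplied by Propositions \ref{DLoc} and \ref{DSpl}. The underlying principle I will use throughout is the familiar one: over the perfect field $\kappa$, a quasi-homomorphism of strict formal $O_{F_0}$-modules extends to a genuine homomorphism if and only if the induced $\breve{F}_0$-linear map on rational Dieudonn\'e modules sends the source lattice into the target lattice. To ease notation, write $W = W_{O_{F_0}}(\kappa)$.

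First I would treat $\mathcal{Z}^{\rm loc}(\Lambda)$. The quasi-isogeny $\rho_{\Lambda} : X_{\Lambda} \to \mathbb{X}$ corresponds on rational Dieudonn\'e modules to the inclusion $\breve{\Lambda} \hookrightarrow N$, so $\rho_{\Lambda, X} = \rho^{-1}\circ (\rho_{\Lambda})_{\kappa}$ extends to an honest isogeny exactly when $\Lambda \otimes W \subset M(X)$. Taking hermitian duals converts this to $M(X)^{\sharp} \subset \Lambda^{\sharp} \otimes W$, and together with $M(X) \subset M(X)^{\sharp}$ from Proposition \ref{DLoc} this produces the four-term chain $\Lambda \otimes W \subset M(X) \subset M(X)^{\sharp} \subset \Lambda^{\sharp} \otimes W$ stated in (1) for $t > h$. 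When $t = h$, the length equality $\dim_{\kappa}(\Lambda^{\sharp}/\Lambda) = 2h = \dim_{\kappa}(M(X)^{\sharp}/M(X))$ forces the chain to collapse to $\Lambda \otimes W = M(X)$. The case of $\mathcal{Y}^{\rm loc}(\Lambda^{\sharp})$ is symmetric after using the polarization to identify $M(X^{\vee}) = M(X)^{\sharp}$: the extension condition for $\rho^{\vee}\circ \lambda_{\mathbb{X}}\circ \rho_{\Lambda^{\sharp}}$ becomes $\Lambda^{\sharp}\otimes W \subset M(X)^{\sharp}$, which combined with $\pi M(X)^{\sharp}\subset M(X)$ and its dual $M(X) \subset \Lambda \otimes W$ gives the chain for $t < h$, and again collapses to $\Lambda \otimes W = M(X)$ when $t = h$ by the same length count.

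For the splitting strata there remains to interpret the extra requirement $x_{*}(\operatorname{Lie}(Y\times S)) \subset \mathcal{F}$ for $x\in \Lambda$ (respectively $x^{\sharp}_{*}(\operatorname{Lie}(Y\times S)) \subset \mathcal{F}$ for $x^{\sharp}\in \Lambda^{\sharp}$) in the case $t\ne h$. Using Remark \ref{DualSpace} together with the identifications $\operatorname{Lie}(X^{\vee}) = M(X)^{\sharp}/VM(X)^{\sharp}$ and $\mathcal{F} = M'(X)/VM(X)^{\sharp}$ coming from the proof of Proposition \ref{DSpl}, the condition says precisely that the image of a $\tau$-fixed generator of $M(Y)$ under the Dieudonn\'e-module map induced by $\lambda\circ x$ (respectively $\lambda_{\mathbb{X}}\circ x^{\sharp}$) lies in $M'(X)$. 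Since the polarization acts as the identity on rational Dieudonn\'e modules under our conventions, this unwinds to $\Lambda \otimes W \subset M'(X)$ (respectively $\Lambda^{\sharp}\otimes W \subset M'(X)$); combining with the $\mathcal{Z}^{\rm loc}$ / $\mathcal{Y}^{\rm loc}$ descriptions just obtained and with Proposition \ref{DSpl} yields the formulas in the statement. The main bookkeeping obstacle will be keeping the various identifications straight --- in particular the distinction between $M^{\sharp}$ and $\pi M^{\sharp}$, the action of $\lambda$ on Dieudonn\'e duals, and the compatibility of $b : \mathbb{V} \to C$ with the embeddings $\Lambda \hookrightarrow N$ --- but no new geometric input beyond Propositions \ref{DLoc} and \ref{DSpl} is required.
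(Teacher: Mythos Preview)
Your proposal is correct and follows exactly the approach the paper intends: the statement is marked with \qed and no proof is given beyond the sentence ``Using Propositions \ref{DLoc} and \ref{DSpl} we can naturally obtain a lattice-theoretic characterization of BT-strata,'' and your argument is precisely the unpacking of that remark, translating the isogeny conditions of Definition \ref{Isogenies} and the extra $\mathcal{F}$-condition into Dieudonn\'e lattice inclusions. The paper later confirms your translation of $x_*(\operatorname{Lie}(Y\times S))\subset\mathcal{F}$ into $\Lambda\otimes W\subset M'(X)$ in the Remark of \S\ref{LPBT}, so your reading of the splitting condition is the intended one.
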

\begin{Corollary}\label{t0}
Let $ \kappa$ be a perfect field over $\bar{k}$ and let $\Lambda\subset C$ be a vertex lattice of type $2h$. Then
\begin{enumerate}
    \item $ \mathcal{Z}^{\rm loc}(\Lambda)(\kappa) =  \mathcal{Y}^{\rm loc}(\Lambda^{\sharp})(\kappa)$ and as sets contain a discrete point in the {\rm RZ} space called the \emph{worst point}.
    \item Both strata $ \mathcal{Z}^{\rm spl}(\Lambda)(\kappa)$ and $ \mathcal{Y}^{\rm spl}(\Lambda^\sharp)(\kappa)$ are isomorphic to $\mathbb{P}^{n-1}_{\kappa}$.
\end{enumerate}
\end{Corollary}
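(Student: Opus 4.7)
The plan is to apply Propositions~\ref{DieudLatt} and~\ref{DSpl} directly, leveraging the key observation that since a vertex lattice $\Lambda \subset C = N^{\tau=1}$ already lies in the $\tau$-fixed locus, the associated lattices $M := \Lambda \otimes W_{O_{F_0}}(\kappa)$ and $M^\sharp = \Lambda^\sharp \otimes W_{O_{F_0}}(\kappa)$ are $\tau$-stable. This $\tau$-stability will trivialize all the Frobenius-type conditions appearing in those propositions.

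For part~(1), Proposition~\ref{DieudLatt} describes both $\mathcal{Z}^{\rm loc}(\Lambda)(\kappa)$ and $\mathcal{Y}^{\rm loc}(\Lambda^\sharp)(\kappa)$ in the case $t = h$ by the single condition $M(X) = \Lambda \otimes W_{O_{F_0}}(\kappa)$, so the two sets coincide tautologically. To confirm this is a single $\kappa$-point, I would verify that $M$ satisfies all conditions of Proposition~\ref{DLoc}: the vertex relation $\pi \Lambda^\sharp \subset \Lambda \subset \Lambda^\sharp$ of index $2h$ tensors up to give $\Pi M^\sharp \subset M \subset M^\sharp$ of index $2h$, while $\tau$-stability yields $\Pi M \subset \tau^{-1}(M) = M \subset \Pi^{-1} M$ and $M + \tau(M) = M$ for free. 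Since $M$ is uniquely determined by $\Lambda$ and $\kappa$, this is the unique worst point.

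For part~(2), I would combine Propositions~\ref{DieudLatt} and~\ref{DSpl} to describe $\mathcal{Z}^{\rm spl}(\Lambda)(\kappa)$ as pairs $(M, M')$ with $M$ fixed as in (1) and $M'$ subject to $V M^\sharp \subset M' \subset \tau^{-1}(M^\sharp) \cap M^\sharp$ with $\operatorname{length}(M^\sharp/M') = 1$. Using $\tau(M^\sharp) = M^\sharp$ and $V = \Pi \tau^{-1}$, both constraints simplify drastically: $\tau^{-1}(M^\sharp) \cap M^\sharp = M^\sharp$ and $V M^\sharp = \Pi M^\sharp$. So $M'$ is a colength-one sublattice of $M^\sharp$ containing $\Pi M^\sharp$, equivalently a hyperplane $\mathcal{F} := M'/\Pi M^\sharp$ in the $n$-dimensional $\kappa$-vector space $V_0 := M^\sharp/\Pi M^\sharp$. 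The natural identification of such hyperplanes with $\mathbb{P}^{n-1}_\kappa(\kappa)$ then yields the claim, and the same argument applies verbatim to $\mathcal{Y}^{\rm spl}(\Lambda^\sharp)$ since the defining lattice conditions coincide when $t = h$.

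The step I would check most carefully is that the splitting conditions $\Pi \mathcal{F} = 0$ and $\Pi \operatorname{Lie}(X^\vee) \subset \mathcal{F}$ (extracted from the proof of Proposition~\ref{DSpl}) impose no constraint on the hyperplane. This follows from the observation that $\operatorname{Lie}(X^\vee) = M^\sharp/V M^\sharp = M^\sharp/\Pi M^\sharp = V_0$, on which $\Pi$ acts as zero, so every hyperplane is admissible. This maximal degeneracy of $\Pi$ on $\operatorname{Lie}(X^\vee)$ is precisely what justifies calling this point the \emph{worst point}.
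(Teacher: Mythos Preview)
Your proposal is correct and follows essentially the same approach as the paper: both invoke Proposition~\ref{DieudLatt} to see that the $t=h$ case forces $M(X)=\Lambda\otimes W_{O_{F_0}}(\kappa)$, and then use Proposition~\ref{DSpl} (and its proof) to see that the remaining datum $M'$ (equivalently $\mathcal{F}\subset\operatorname{Lie}X^\vee$) is an unconstrained hyperplane in an $n$-dimensional $\kappa$-space. Your write-up simply makes explicit the $\tau$-stability argument and the simplification $VM^\sharp=\Pi M^\sharp$, $\tau^{-1}(M^\sharp)\cap M^\sharp=M^\sharp$ that the paper leaves implicit in the phrase ``from the above constructions.''
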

\begin{proof}
Both claims follow from Proposition \ref{DieudLatt}. More precisely, for the first claim we have $ \mathcal{Z}^{\rm loc}(\Lambda)(\kappa) =  \mathcal{Y}^{\rm loc}(\Lambda^{\sharp})(\kappa) =\{\Lambda \otimes W_{O_{F_0 }}(\kappa) \}$. 
For the second claim, since $ M =\Lambda \otimes W_{O_{F_0}} (\kappa)$ we can easily see from the above constructions, see Proposition \ref{DSpl} and its proof, that $\mathcal{F}$ can be any rank $n - 1$ locally free $\kappa$-module on $\operatorname{Lie} X^\vee$. 
\end{proof}
Note that under the local model diagram, the points $ M = \Lambda \otimes W_{O_{F_0 }}(\kappa) \in \mathcal{Z}^{\rm loc}(\Lambda)(\kappa)$, where $\Lambda$ is a vertex lattice of type $2h$, correspond to the worst point of the local model ${\rm M}_n^{\rm loc}$. This justifies the terminology worst point introduced in the above corollary (see also \cite[\S 5.3]{LRZ}).


\section{Strata splitting models}\label{StrataSplModles}

In this section, we will introduce the strata splitting models which are defined by purely linear algebraic data. As we will see in Section \ref{LPBT}, these models are \'{e}tale locally isomorphic to the Bruhat-Tits strata $\calZ^\spl(\Lambda)$ and $\calY^\spl(\Lambda^\sharp)$. Therefore, it is enough to study these easier models to obtain several geometric ``local" properties for the corresponding BT-strata. To define these, we first introduce the strata local models for the RZ space $\mathcal{N}_n^{\rm loc}$. We will then see that, just as the splitting RZ spaces $\mathcal{N}_n^{\rm spl}$ are linear modifications of $\mathcal{N}_n^{\rm loc}$, so too are the strata splitting models of the corresponding strata local models.

\subsection{Review of strata local models}

In this subsection, we will briefly review the strata local models of $\calZ^\loc(\Lambda)$ and $\calY^\loc(\Lambda^\sharp)$. These models are reduced, normal, Cohen-Macaulay with dimensions depending on the type of lattice $\Lambda$ and the vertex stabilizer level of the RZ space $\mathcal{N}_n^{\rm loc}$ (see Theorem \ref{StrataLocalModel}). We refer to \cite[\S 4]{HLS2} for more details on strata local models. 

Recall that $F/F_0$ is a ramified quadratic extension and $\pi \in F$ (resp. $\pi_0$) is a uniformizer of $F$ (resp. $F_0$) with $\pi^2 = \pi_0$. Let $k$ be the perfect residue field of characteristic $\neq 2$.  Consider the $F$-vector space $F^n$ of dimension $n > 3$ and let 
\[
h: F^n \times F^n \rightarrow F
\]
be a split $F/F_0$-hermitian form, i.e. there is a basis $e_1, \dots, e_n$ of $F^n$ such that
\[
h(ae_i,be_{n+1-j}) = \overline{a}b\cdot \delta_{i,j} \quad \text{for  all} \quad a,b \in F, 
\]
where $a \mapsto \overline{a}$ is the non-trivial element of $\text{Gal}(F/{F_0})$. Attached to $h$ are the respective alternating and symmetric $F_0$-bilinear forms $F^n \times F^n \rightarrow F_0$ given by
\[
\langle x, y \rangle = \frac{1}{2}\text{Tr}_{F /F_0} (\pi^{-1}\phi(x, y)) \quad \text{and} \quad ( x, y ) =  \frac{1}{2}\text{Tr}_{F /F_0} (\phi(x, y)).
\]
For any $O_F$-lattice $\Lambda$ in $F^n$, we denote by $\Lambda^\sharp = \{v \in F^n | h( v, \Lambda ) \subset O_{F} \}$, $\Lambda^\vee = \{v \in F^n | \langle v, \Lambda \rangle \subset O_{F_0} \}$ and $\Lambda^\bot = \{v \in F^n | ( v, \Lambda ) \subset O_{F_0} \}$ the dual lattices respectively for the hermitian, alternating and symmetric forms.
The forms $\langle \, , \, \rangle $ and $ (\, , \,) $
induce perfect $O_{F_0}$-bilinear pairings
\begin{equation}\label{perfectpairing}
    \Lambda \times \Lambda^\vee \xrightarrow{\langle \, , \, \rangle } O_{F_0}, \quad \Lambda^\bot \times \Lambda \xrightarrow{ (\, , \,)} O_{F_0}
\end{equation}
for all $\Lambda$. Then we have $\Lambda^\sharp= \Lambda^\vee=\pi \Lambda^\bot$.
For $i= kn+j$ with $0\leq j< n$, we define the standard lattices
\begin{equation}\label{eq 212}
\Lambda_i = \pi^{-k}\cdot \text{span}_{O_F} \{\pi^{-1}e_1, \dots, \pi^{-1}e_j, e_{j+1}, \dots, e_n\}.	
\end{equation} 
Note that $ \Lambda_{n-i}:=\Lambda^\bot_i $ and $ \Lambda_{-i}:=\Lambda^\sharp_i=\Lambda^\vee_i $. Then $\Lambda_i$'s form a self-dual periodic lattice chain $\mathcal{L}=\{\Lambda_i\}_{i\in \ZZ}$.
For nonempty subsets $I \subset \{0, \dots, m\}$ where $m = \lfloor n/2\rfloor$, 
let $\calL_I=\{\Lambda_i\}_{i\in \pm I+n\ZZ}$  be a self-dual periodic lattice chain. Let $\scrG_I = \underline{{\rm Aut}}(\mathcal{L}_I)$ be the (smooth) group scheme over $O_{F_0}$ with $P_I = \scrG_I(O_{F_0})$ the subgroup of $G(F_0)$ fixing the lattice chain $\mathcal{L}_I$ (see \cite[\S 1.2.3(b)]{PR} for more details). For even integers $2h, 2t$, where $0\leq 2h\neq 2t\leq n$, we define the following index sets: $[2h]=\pm h+n\ZZ$, $[2h, 2t]=\{\pm h, \pm t\}+n\ZZ$ and let $\calL_{[2h]}$, $\calL_{[2h,2t]}$ be the standard self-dual lattice chains.

We first recall the definition of {\it wedge local models} $\M^{[2h],\wedge}_n$. For any $O_F$-algebra $R$, let $\Lambda_{i,R}$ be the tensor product $\Lambda_i\otimes_{O_{F_0}} R$ as an $O_F\otimes_{O_{F_0}} R$-module. Set $\Pi=\pi\otimes 1$, and $\pi=1\otimes \pi$.

\begin{Definition}
{\rm
The wedge local model $\M^{[2h],\wedge}_n$ is a projective scheme over $\Spec O_F$ representing the functor that sends each $O_F$-algebra $R$ to the set of subsheaves $\calF_i \subset  \Lambda_{i,R}$, where $i\in [2h]$, such that
\begin{itemize}
    \item For all $i\in [2h]$, $\calF_i$ as $O_F\otimes R$-modules are Zariski locally on $R$ direct summands of rank $n$.
    \item For all $i, j\in [2h]$ with $i<j$, the maps induced by the inclusions $  \Lambda_{i,R}\subset \Lambda_{j, R}$ restrict to maps
    \[
    \calF_i \rightarrow \calF_j.
    \]
    \item For all $i\in [2h]$, the isomorphism $\Pi: \Lambda_{i, R} \rightarroweq \Lambda_{i-n, R}$ identifies
    \[
    \calF_i \rightarroweq \calF_{i-n}.
    \]
    \item For all $i\in [2h]$, $\calF_{-i}$ is the orthogonal complement of $\calF_i$ with respect to $ \bb \,,\,\pp : \Lambda_{-i} \times  \Lambda_i\rightarrow R$.
    \item (Kottwitz condition) For all $i\in [2h]$,
    \[
    \text{char}_{\Pi |  \mathcal{F}_i} (X)= (X + \pi)^{n-1}(X - \pi) .
    \] 
    \item (Wedge condition) For all $i\in [2h]$,
    \[
    \wedge^2(\Pi-\pi\mid \calF_i)=0, \quad \wedge^n(\Pi+\pi\mid \calF_i)=0.
    \]
\end{itemize}   
}
\end{Definition}

Wedge local models $\M^{[2h],\wedge}_n$ are not always flat in general (see \cite{Sm3} for more details). We define the {\it local model} $\M^{\loc, [2h]}_n$ as the flat closure of $\M^{[2h],\wedge}_n$. Then the strata local models $\M^{\loc, [2h]}_n(2t)$ are defined as follows:

\begin{Definition}
{\rm
Let $R$ be an $k$-algebra, and $\calL_{[2h]}$, $\calL_{[2h,2t]}$ be the standard self-dual lattice chains. The strata local model $\M^{\loc, [2h]}_n(2t)$ is the projective scheme over $\Spec k$, representing the functor that sends each $k$-algebra $R$ to the set of subsheaves 
\[
\calF_i \subset  \Lambda_{i,R}, \quad \text{where}~ i\in [2h,2t] 
\]
such that
\begin{itemize}
    \item $\calF_i=\Pi \Lambda_i$ for $i\in [2t]$.
    \item $(\calF_i)_{i\in [2h]}\in \overline{\M}^{\loc, [2h]}_n\otimes R$.
    \item For any $i< j$ with either $i\in [2h], j\in [2t]$ or $i\in [2t], j\in [2h]$, the natural morphism $\Lambda_i\rightarrow \Lambda_j$ maps $\calF_i$ to $\calF_j$.
\end{itemize}
}
\end{Definition}

Here $\overline{\M}^{\loc, [2h]}_n$ is the special fiber of the local model $\M^{\loc, [2h]}_n$. The main theorem for the strata local model is the following:

\begin{Theorem}[\cite{HLS2}, \S 4]\label{StrataLocalModel}
The strata local model  $\M^{\loc, [2h]}_n(2t)$ is reduced, normal, and Cohen-Macaulay. Moreover, 

(1). For $t> h$, the strata local model $\M^{\loc, [2h]}_n(2t)$ has dimension $t+h$.

(2). For $t< h$, excluding the case where $n$ is even and $h=\frac n2$ ($\pi$-modular case), the strata local model $\M^{\loc, [2h]}_n(2t)$ has dimension $n-t-h-1$.

(3). For $n$ is even, $t< h=\frac n2$, the strata local model $\M^{\loc, [n]}_n(2t)$ is smooth and irreducible of dimension $\frac n2-t-1$.
\end{Theorem}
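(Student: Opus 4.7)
The plan is to prove all assertions by constructing explicit affine open coverings $\{\calU_{i_0}\}$ of $\M^{\loc, [2h]}_n(2t)$ and verifying each property chart by chart. The strategy is entirely parallel to the analysis that will be carried out later in the paper for the strata splitting models: translate the lattice-chain conditions into explicit matrix equations, identify each chart with a concrete affine scheme (either a determinantal variety or an affine space), and read off the required properties from the explicit description.

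First I would parametrize the Hodge filtration $\calF_h \subset \Lambda_{h,R}$ as the column space of a $2n \times n$ matrix in block form, using the standard basis $e_1,\dots,e_n$ and the associated splitting of $\Lambda_{h,R}$. The orthogonal complement condition $\calF_{-h} = \calF_h^{\perp}$ determines $\calF_{-h}$, so only $\calF_h$ carries free parameters. Standard affine charts $\calU_{i_0}$ are obtained by making an $i_0$-th row of this matrix a unit pivot. Imposing (a) the Kottwitz and wedge conditions at level $[2h]$, (b) the strata rigidity $\calF_i = \Pi\Lambda_i$ for $i \in [2t]$, and (c) the chain compatibility between levels $[2h]$ and $[2t]$, collapses each chart into an explicit system of matrix equations in the entries of the parametrizing matrix.

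The resulting charts take one of three forms. For $t > h$ a chart becomes a determinantal scheme of shape
\[
\Spec \frac{k[X,Y,Z]}{\big(\rank([X;Y]) - 1,\ \wedge^2([Y\mid Z])\big)},
\]
with $X$ of size $2h \times 1$ and $Y,Z$ of size $(t-h)\times 1$; the rank $=1$ condition becomes automatic once an entry of $[X^t \mid Y^t]$ is inverted, so the only surviving relations are the $2\times 2$ minors of $[Y\mid Z]$. For $t < h$ outside the $\pi$-modular case, the strata compatibility forces enough entries of the parametrizing matrix to vanish that the chart reduces to an affine space $\mathbb{A}^{n-t-h-1}_k$; for $n$ even and $t < h = n/2$, an analogous collapse yields affine space of dimension $n/2 - t - 1$. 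Given these descriptions, smoothness, irreducibility, reducedness, normality and Cohen--Macaulayness in cases (2) and (3) are immediate, while in case (1) they follow from the classical theory of generic determinantal ideals (e.g.\ Bruns--Vetter), since the $\wedge^2$ condition on a $(t-h)\times 2$ matrix cuts out a cone over a Segre variety. The dimension assertions are then direct parameter counts; for instance in case (1) one obtains $(2t-1) - (t-h-1) = t+h$.

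The main obstacle will be case (1): one must verify that the determinantal presentation above is genuinely reduced and normal, rather than merely set-theoretically correct. The cleanest route is to stratify by the rank of $[Y\mid Z]$ and apply Serre's criterion $(R_1)+(S_2)$; alternatively one may identify each chart with an affine patch of a known affine Schubert variety in the partial affine flag variety attached to $\scrG_{[2h,2t]}$ and invoke the Pappas--Rapoport-type normality and Cohen--Macaulayness theorems. A secondary technical burden is the careful bookkeeping of the duality $\calF_{-i} = \calF_i^{\perp}$ and of the $\Pi$-periodicity $\Lambda_{i,R} \xrightarroweq \Lambda_{i-n, R}$, required to ensure the various affine charts glue consistently and that no condition is over- or under-counted when passing between the vertex levels in $[2h]$ and $[2t]$.
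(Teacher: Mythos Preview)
The paper does not prove this theorem; it is stated with the attribution ``[\cite{HLS2}, \S 4]'' and no argument is supplied beyond the citation. So there is no in-paper proof to compare against.

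More importantly, your proposal misidentifies the model. The affine charts you write down for $t>h$---with column \emph{vectors} $X$ of size $2h\times 1$ and $Y,Z$ of size $(t-h)\times 1$ subject to $\wedge^2([Y\mid Z])=0$---are precisely the charts of the strata \emph{splitting} model $\M^{\spl,[2h]}_n(2t)$ computed later in the paper (Proposition~\ref{prop X-strata}), not of the strata \emph{local} model $\M^{\loc,[2h]}_n(2t)$. Those vectors are the coordinates of the auxiliary rank-one datum $\calG_{-h}$: in the paper's notation your $[X;Y]$ is the vector $V$ parametrizing $\calG_{-h}$, and the factorization of the $n\times n$ block of the Hodge filtration as an outer product $VZ^t$ (equation~(\ref{eq 334})) is exactly what the splitting condition $(\Pi+\pi)\calF_{-h}\subset\calG_{-h}$ provides. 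The local model carries no such datum; its charts are cut out by wedge and rank conditions directly on $n\times n$ matrices and are not of the shape you describe. The same confusion recurs for $t<h$: your claim that each chart collapses to $\mathbb{A}^{n-t-h-1}_k$ is the splitting-model result (Proposition~\ref{prop Y-strata}), whereas the strata local model in this range is only asserted to be normal and Cohen--Macaulay, not smooth---compare Theorem~\ref{thm splstrata}(2) with Theorem~\ref{StrataLocalModel}(2). If you want an independent proof rather than the citation, you must either work out the genuine local-model equations (on matrix variables, without the $\calG_{-h}$ factorization), or identify $\M^{\loc,[2h]}_n(2t)$ with Schubert varieties in the affine flag variety for $\scrG_{[2h,2t]}$ and quote the Pappas--Rapoport--Zhu normality and Cohen--Macaulayness theorems, which is what \cite{HLS2} does.
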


\subsection{Strata splitting models}\label{StrataSpl}
In order to define the strata splitting models, we first recall the definition of splitting models.

\begin{Definition}
{\rm
The splitting model $\M^{\spl,[2h]}_n$ is a projective scheme over $\Spec O_F$ representing the functor that sends each $O_F$-algebra $R$ to the set of subsheaves 
\[
\begin{array}{l}
   \calF_i \subset  \Lambda_{i,R}, \quad ~\text{where}~ i\in [2h]  \\
   \calG_j \subset  \calF_j, \quad ~\text{where}~ j\in \{-h\}+n\ZZ
\end{array}
\]
such that
\begin{itemize}
    \item For all $i\in [2h], j\in \{-h\}+n\ZZ$, $\calF_i$ (resp. $\calG_j$) as $O_F\otimes R$-modules are Zariski locally on $R$ direct summands of rank $n$ (resp. rank $1$).
    \item For all $i\in [2h]$, $(\calF_i)\in \M^{[2h],\wedge}_n\otimes R$.
    \item (Splitting condition)  For all $j\in \{-h\}+n\ZZ$,
    \[
    (\Pi+\pi) \calF_{j} \subset \calG_{j}, \quad  (\Pi-\pi) \calG_{j} = (0).
    \]
\end{itemize}
}
\end{Definition}

We recall the following facts about the splitting model.
\begin{Proposition}[\cite{ZacZhao2}, Theorem 5.1]\label{SplitFlat}

a) The scheme $\M^{\spl,[2h]}_n$ is $O_F$-flat, normal and Cohen-Macaulay.

b) The special fiber of $\M^{\spl,[2h]}_n$ is reduced.  
\end{Proposition}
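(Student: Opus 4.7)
The proposition is cited from the authors' earlier work \cite{ZacZhao2}, and an independent strategy would follow the standard blueprint for splitting models introduced by Kr\"amer and Richarz. My plan is to analyze the forgetful morphism from the splitting model to the wedge local model, use explicit affine charts to reduce flatness, normality and Cohen--Macaulayness to matrix computations, and then separately verify reducedness of the special fiber.

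First I would study the forgetful morphism
\[
\phi : \M^{\spl,[2h]}_n \longrightarrow \M^{[2h],\wedge}_n, \quad (\calF_i, \calG_j) \mapsto (\calF_i).
\]
The splitting conditions $(\Pi+\pi)\calF_{-h} \subset \calG_{-h}$ and $(\Pi - \pi)\calG_{-h} = 0$ force $\calG_{-h}$ to be a rank-$1$ summand of $\calF_{-h}$ that contains the image of $\Pi + \pi$ and is annihilated by $\Pi - \pi$. A direct check using the Kottwitz and wedge conditions shows that over the generic fiber, $\Pi + \pi$ has rank exactly one on $\calF_{-h}$ with image equal to $\ker(\Pi - \pi \mid \calF_{-h})$, so $\calG_{-h}$ is uniquely determined and $\phi$ is generically an isomorphism. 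This identifies $\M^{\spl,[2h]}_n$ with a linear modification of $\M^{[2h],\wedge}_n$ in the sense of Pappas. Since the flat local model $\M^{\loc,[2h]}_n$ is known to be flat, normal and Cohen--Macaulay, and since the linear modification is taken along an explicit ideal sheaf supported on the ``worst-point'' locus, flatness of $\M^{\spl,[2h]}_n$ follows from the general properties of linear modifications.

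For normality and Cohen--Macaulayness, I would pass to explicit affine neighborhoods of the worst points, where $\calF_{-h} = \Pi\Lambda_{-h,R}$ and the fiber of $\phi$ is isomorphic to $\PP^{n-1}$. Choosing matrix coordinates adapted to the standard basis $e_1,\dots,e_n$, the splitting data unfold into rank/determinantal equations in the matrix entries of $\Pi + \pi$ on $\calF_{-h}$ together with a chosen generator of $\calG_{-h}$. These cut out a determinantal scheme of classical type, whose Cohen--Macaulay and normality properties follow from Eagon--Northcott/Hochster--Eagon style results. The same charts give the special fiber explicitly as a union of determinantal subvarieties, so that reducedness can be verified by exhibiting a smooth dense open subscheme on each irreducible component and invoking Serre's $R_0+S_1$ criterion (combined with the flatness and Cohen--Macaulayness already obtained).

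The main obstacle is the analysis at the worst points, where $\calG_{-h}$ becomes a free rank-$1$ parameter and the defining equations degenerate maximally. One must show that the resulting determinantal ideals cut out a Cohen--Macaulay scheme of the expected relative dimension $n-1$, that no embedded components appear in the special fiber, and that the singular locus has sufficiently high codimension to guarantee normality. Handling this requires a careful identification of the irreducible components of the degenerate fiber and Jacobian computations adapted to the matrix parametrization, which is precisely where the arguments of \cite{ZacZhao2} do most of the work.
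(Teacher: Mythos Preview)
The paper does not give its own proof of this proposition: it is simply cited from \cite{ZacZhao2}, with Remark~\ref{Case_m-1} noting that the missing case $n=2m$, $h=m-1$ follows by the same affine-chart computation. Your proposal correctly identifies the overall strategy used there---analyze the forgetful map $\phi$, show it is a generic isomorphism, pass to explicit matrix charts around the worst point, and reduce everything to determinantal algebra---so in broad outline you are aligned with the cited argument.

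There is, however, a genuine gap in your deduction of flatness. You write that ``flatness of $\M^{\spl,[2h]}_n$ follows from the general properties of linear modifications,'' invoking the known flatness and Cohen--Macaulayness of $\M^{\loc,[2h]}_n$. This does not work: a linear modification of a flat scheme need not be flat, and in fact the whole point of \cite{ZacZhao2} is that flatness of the splitting model must be established \emph{directly} from the affine charts, not inherited from the local model. In the actual argument, one writes down the equations of each chart over $O_F$, identifies them with a determinantal scheme of the expected relative dimension, and deduces flatness from Cohen--Macaulayness together with the fiber-dimension criterion over a DVR (equivalently, from the absence of $\pi$-torsion once the special fiber is shown to be reduced). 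Your later paragraphs do sketch exactly this chart computation, so the ingredients are there; but the logical order should be reversed---the explicit determinantal analysis is what gives flatness, not an abstract appeal to linear modifications.
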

The splitting model $\M^{\spl,[2h]}_n$ supports an action of $\scrG_{[2h]}$ and there is a $\scrG_{[2h]}$-equivariant projective morphism 
\[
 \tau : \M^{\spl,[2h]}_n \rightarrow \M^{\loc,[2h]}_n
\]
which is given by $(\calF_i, \calG_{j}) \mapsto (\calF_i)$ on $R$-valued points. (Indeed, we can easily see, as in \cite[Definition 4.1]{Kr}, that $\tau$ is well defined.) The morphism $\tau : \M^{\spl,[2h]}_n \rightarrow \M^{\loc,[2h]}_n$ induces an isomorphism on the generic fibers (see \cite[\S 3.2]{ZacZhao2}). 

\begin{Remark}\label{Case_m-1}{\rm
The case $n=2m$ is even and $h=m-1$ is not directly treated in \cite{ZacZhao2}. However, we can follow exactly the same steps and obtain the affine charts described in \cite[Proposition 4.2]{ZacZhao2} for $n=2m$ and $ h = m-1$ ($h$ corresponds to $\ell$ in loc. cit.). Then, it is an easy exercise to verify that these affine charts, and so the splitting model $ \M^{\spl,[2(m-1)]}_{2m}$, are flat, normal, Cohen-Macaulay and with a reduced special fiber. 
}\end{Remark}

Now we can define the strata splitting models. For even integers $2h, 2t$, where $0\leq 2h\neq 2t\leq n$, let $\overline{\M}^{\spl,[2h]}_n$ be the special fiber of the splitting model over $\Spec k$.

\begin{Definition}
{\rm
Let $R$ be an $k$-algebra. The strata splitting model $\M^{\spl,[2h]}_n(2t)$ is the projective scheme over $\Spec k$, representing the functor that sends each $k$-algebra $R$ to the set of subsheaves 
\[
\begin{array}{l}
   \calF_i \subset  \Lambda_{i,R}, \quad ~\text{where}~ i\in [2h, 2t]  \\
   \calG_j \subset  \calF_i, \quad ~\text{where}~ j\in \{-h\}+n\ZZ
\end{array}
\]
such that
\begin{itemize}
    \item $\calF_i=\Pi \Lambda_i$ for $i\in [2t]$.
    \item $(\calF_i, \calG_j)_{i\in [2h], j\in \{-h\}+n\ZZ}\in \overline{\M}^{\spl, [2h]}_n\otimes R$.
    \item For any $i_1< i_2$ with either $i_1\in [2h], i_2\in [2t]$ or $i_1\in [2t], i_2\in [2h]$, the natural morphism $\Lambda_{i_1}\rightarrow \Lambda_{i_2}$ maps $\calF_{i_1}$ to $\calF_{i_2}$.
    \item Let $j=-h+kn$ for some $k$, $\calG_j$ satisfies the following condition:
    
    (1).When $t> h$, we have $\calG_j\subset \Lambda_M^\bot$, where $\Lambda_M$ is the image of $\Lambda_{-t+kn}\rightarrow \Lambda_{h+kn}$, and $\Lambda_M^\bot$ is the dual of $\Lambda_M$ with respect to $\bb~,~ \pp$.

    (2).When $t< h$, we have $\calG_j\subset \Lambda_M^\bot$, where $\Lambda_M$ is the image of $\Lambda_{t+(k-1)n}\rightarrow \Lambda_{h+(k-1)n}$, and $\Lambda_M^\bot$ is the dual of $\Lambda_M$ with respect to $( ~ ,~ )$.
\end{itemize}
}
\end{Definition}

The strata splitting model $\M^{\spl,[2h]}_n(2t)$ is a  closed subscheme of $\overline{\M}^{\spl,[2h]}_n$. By restricting to the strata splitting models, we get the projective $\scrG_{[2h]}$-equivariant morphism $ \tau: \M^{\spl,[2h]}_n(2t)\rightarrow \M^{\loc,[2h]}_n(2t)$. The main theorem of this section is as follows.

\begin{Theorem}\label{thm splstrata}
The strata splitting model  $\M^{\spl, [2h]}_n(2t)$ is reduced, normal, and Cohen-Macaulay. Moreover, 

(1). For $t> h$, the strata splitting model $\M^{\spl, [2h]}_n(2t)$ has dimension $t+h$.

(2). For $t< h$, excluding the case where $n$ is even and $h=\frac n2$ ($\pi$-modular case), the strata splitting model $\M^{\spl, [2h]}_n(2t)$ is smooth of dimension $n-t-h-1$.

(3). For $t_2<h<t_1$, the intersection of strata splitting models $\M^{\spl, [2h]}_n(2t_1)\cap \M^{\spl, [2h]}_n(2t_2)$ is smooth of dimension $t_1-t_2-1$.
\end{Theorem}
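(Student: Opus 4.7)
The plan is to establish the theorem by constructing an explicit affine open covering $\{\calU_{i_0}\}$ of $\M^{\spl, [2h]}_n(2t)$ and analyzing each chart directly, following and extending the blueprint used in \cite{ZacZhao2} for the full splitting model $\M^{\spl,[2h]}_n$. I would begin by trivializing each $\Lambda_{i,R}$ for $i\in[2h,2t]$ using the standard basis and expressing the data $(\calF_i, \calG_j)$ as matrices of indeterminates over $R$. The strata conditions $\calF_i=\Pi\Lambda_i$ for $i\in[2t]$, the compatibility with the transition maps $\Lambda_i\to \Lambda_j$, the $\Pi$-equivariance $\calF_i\rightarroweq \calF_{i-n}$, and the duality $\calF_{-i}=\calF_i^\bot$ together reduce the moduli data to a small collection of free variables with explicit equations. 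The additional constraint $\calG_{-h}\subset \Lambda_M^\bot$ then translates, depending on whether $t>h$ or $t<h$, into the concrete form anticipated in the introduction.

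For the case $t>h$, the computation identifies each affine chart $\calU_{i_0}$ (obtained by normalizing a certain $i_0$-th entry of $[X^t\mid Y^t]$ to $1$) with
\[
\Spec\frac{k[X,Y,Z]}{(\wedge^2([Y\mid Z]))},
\]
where $X$ accounts for the remaining entries of the first column and $Y,Z$ are columns of length $t-h$. The ideal $(\wedge^2([Y\mid Z]))$ defines the classical rank-one locus in $(t-h)\times 2$ matrices, namely the affine cone over the Segre variety $\mathbb{P}^{t-h-1}\times \mathbb{P}^1$; by classical determinantal variety theory, this is integral, normal, and Cohen-Macaulay of dimension $t-h+1$. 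Taking the product with the remaining affine coordinates preserves these properties, and a direct count yields total dimension $t+h$ in each chart, proving (1).

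For the case $t<h$, the combined strata and splitting conditions force enough vanishing that each affine chart becomes isomorphic to $\mathbb{A}^{n-h-t-1}_k$; in particular, the analogue of the $Z$-block is killed by the additional orthogonality constraint, leaving only $n-h-t-1$ free parameters. This gives smoothness and the stated dimension at once, establishing (2). For the intersection in (3), the conditions from the $t_2<h$ side collapse the determinantal piece arising from the $t_1>h$ side in the common chart (the $(Y,Z)$-block is forced to vanish by the extra orthogonality), and one is left with a smooth affine chart of dimension $t_1-t_2-1$.

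The main obstacle will be the careful bookkeeping in these explicit matrix computations: one must simultaneously track the several filtrations $\calF_i$ for $i\in[2h,2t]$ together with $\calG_{-h}$, their inclusions, their duality relations under both the symplectic and symmetric forms, and the fact that the relevant dual lattice $\Lambda_M^\bot$ differs between the $\calZ$- and $\calY$-situations. In particular, verifying (3) requires aligning coordinate systems coming from the two stratum types into a single common chart and confirming that no hidden relations appear among the remaining free variables. Once the affine chart descriptions are in place, reducedness, normality, and the Cohen-Macaulay property follow from the corresponding classical results for affine space and for the rank-one determinantal locus, while smoothness in parts (2) and (3) is automatic.
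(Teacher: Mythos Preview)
Your proposal is correct and follows essentially the same route as the paper: explicit affine-chart computations extending the blueprint of \cite{ZacZhao2}, yielding the rank-one determinantal chart for $t>h$ (Proposition~\ref{prop X-strata}), the affine-space chart for $t<h$ (Proposition~\ref{prop Y-strata}), and the intersection analysis of \S\ref{intersection}. One small correction to your description of case~(3): in the paper's coordinates only the $Y$-block (their $V_{2,1}$) is forced to vanish by the $\calY$-side orthogonality, while the $Z$-block ($Z_{2,3}$, of size $t_1-h$) survives, so the $t_1-t_2-1$ free parameters come from both the surviving $X$-part ($V_{1,1}'$, of size $h-t_2$) and the $Z$-part, not from $X$ alone.
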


\subsection{Affine charts}

In this subsection, we will prove Theorem \ref{thm splstrata}. Using the construction of the strata local models (\cite[\S 4]{HLS2}) and the projective $\scrG_{[2h]}$-equivariant morphism $ \tau: \M^{\spl,[2h]}_n(2t)\rightarrow \M^{\loc,[2h]}_n(2t)$, it suffices to compute an open affine covering of the inverse image of the worst point under $\tau$. We refer the reader to \cite[\S 4.1]{ZacZhao2} for the definition of the worst point. 
Here, studying these affine charts will yield Propositions \ref{prop X-strata} and \ref{prop Y-strata}.

\subsubsection{$\calZ^\spl$-strata splitting models}
Let $\Lambda\subset C$ be a vertex lattice of type $2t$ with $t> h$. We begin by computing the strata splitting model associated to $\calZ(\Lambda)$. By an unramified extension, we can reduce to the case where the hermitian form is split. We select the same affine charts as those in \cite[\S 4.2]{ZacZhao2}, i.e.,
\begin{equation}
\begin{tikzcd}
\Lambda_{-t,R} \arrow[r,"\lambda_1"] 
&
\Lambda_{-h,R} \arrow[r,"\lambda"] 
&
\Lambda_{h,R} \arrow[r,"\lambda_2"]
&
\Lambda_{t,R}
\\
\pi\Lambda_{-t,R} \arrow[u,hook]\arrow{r}
&
\calF_{-h} \arrow[u,hook]\arrow{r}
&
\calF_{h} \arrow[u,hook]\arrow{r}
&
\pi\Lambda_{t,R}\arrow[u,hook]
\\
&
\calG_{-h}\arrow[u, hook]
&&
\end{tikzcd},
\end{equation}
where
\begin{equation}\label{eq FG}
\calF_h = \left[\ 
\begin{matrix}
X\\ \hline
I_n  
\end{matrix}\ \right], \quad 
\mathcal{F}_{-h}=  \left[\ 
\begin{matrix}
Y\\ \hline
I_n  
\end{matrix}\ \right],\quad
\calG_{-h}= \left[\ 
\begin{matrix}
0\\ \hline
V  
\end{matrix}\ \right]   
\end{equation}
and $X, Y$ are matrices of size $n\times n$ and the matrix $V$ is of size $n\times 1$. We break up the matrices $X, Y, V$ into blocks as follows:
\begin{equation}\label{eq XYV}
X=\left[\ \begin{matrix}
X_1&X_2 \\ 
X_3&X_4 
\end{matrix}\ \right],\quad
Y=\left[\ \begin{matrix}
Y_1&Y_2 \\ 
Y_3&Y_4 
\end{matrix}\ \right],	\quad
V= \left[\ 
\begin{matrix}
V_1\\ 
V_2 
\end{matrix}\ \right],
\end{equation}
where $X_1$ and $Y_1$ are of size $2h\times 2h$, $X_4$ and $Y_4$ are of size $(n-2h)\times (n-2h)$, and $V_1$ and $V_2$ are of sizes $2h\times 1$ and $(n-2h)\times 1$ respectively. By \cite[Lemma 4.2.1]{ZacZhao2}, there exists a $n\times 1$ matrix $Z= \left[\ 
\begin{matrix}
Z_1\\ 
Z_2 
\end{matrix}\ \right]$, with $Z_i$ of the same size as $Y_i$ ($i=1,2$), such that $X, Y$ can be expressed in terms of $V, Z$:
\begin{equation}\label{eq 334}
\begin{array}{llll}
Y_1=V_1Z_1^t, & Y_2=V_1Z_2^t,& Y_3=V_2Z_1^t, & Y_4=V_2Z_2^t,\\
X_1=-JZ_1V_1^tJ, &X_2=JZ_1V_2^tH, &X_3=-HZ_2V_1^tJ, & X_4=HZ_2V_2^tH.
\end{array}
\end{equation}	
Here $H=H_{n-2h}$ is the unit anti-diagonal matrix of size $n-2h$, and $J=\left[\ 
\begin{matrix}
&H_h \\ 
-H_h& 
\end{matrix}\ \right]$ of size $2h$. Moreover, by \cite[Proposition 4.2.2]{ZacZhao2}, the matrices $V_1, V_2, Z_1, Z_2$ satisfy the following conditions:
\begin{equation}\label{eq 344}
Z_1=-\frac12 (Z_2^tHZ_2)JV_1, \quad
\wedge^2 (V_2\mid HZ_2)=0,\quad
Z_2^tV_2=0.
\end{equation}
To make $(\calF_h, \calF_{-h}, \calG_{-h})\in \M^{\spl, [2h]}_n(2t)(R)$, we still need to check:
\begin{equation}
\lambda_1(\Pi \Lambda_{-t,R})\subset \calF_{-h},\quad
\lambda_2(\calF_h)\subset \Pi \Lambda_{t,R},\quad
\calG_{-h}\subset \Lambda_{M}^\bot, 
\end{equation}
where $\Lambda_M$ is the image of $\Lambda_{-t}\rightarrow \Lambda_h$. Note that the ordered basis of $\Lambda_{\pm t}$, $\Lambda_{\pm h}$ are the same as \cite[(4.1.1)]{ZacZhao2}. With respect to these ordered basis, we have:
\begin{equation}
\lambda_1=\left[\ \begin{array}{ll|ll}
0 & I_{n-t+h} &0 & 0 \\ 
0_{t-h}& 0 &0 &0\\ \hline
0&0_{n-t+h}&0&I_{n-t+h}\\
I_{t-h}&0&0_{t-h}&0
\end{array}\ \right],\quad	
\lambda_2=\left[\ \begin{array}{ll|ll}
0 & I_{t-h} &0 & 0 \\ 
A& 0 &0 &0\\ \hline
0&0_{t-h}&0&I_{t-h}\\
B&0&A&0
\end{array}\ \right],
\end{equation}
where
\[
A=\left[\ \begin{matrix}
I_{2h}  &0 & 0 \\ 
 0 &0_{t-h} &0\\
0&0&I_{n-2t}
\end{matrix}\ \right],\quad
B=\left[\ \begin{matrix}
0_{2h}  &0 & 0 \\ 
 0 &I_{t-h} &0\\
0&0&0_{n-2t}
\end{matrix}\ \right].
\]
Now $\lambda_1(\Pi \Lambda_{-t,R})\subset \calF_{-h}$ implies that
\begin{equation}\label{eq Y}
Y_1=0,\quad Y_3=0,\quad Y_2=\begin{blockarray}{cccc}
\matindex{t-h} &\matindex{n-2t}&\matindex{t-h}&\\
\begin{block}{[ccc]c}
  0  &0 & * & \matindex{h} \\ 
 0 &0&*& \matindex{h} \\
\end{block}
   \end{blockarray}
,\quad
Y_4=\begin{blockarray}{cccc}
\matindex{t-h} &\matindex{n-2t}&\matindex{t-h}&\\
\begin{block}{[ccc]c}
  0  &0 & * & \matindex{t-h} \\ 
 0 &0&*& \matindex{n-2t} \\
 0  &0 & * & \matindex{t-h} \\ 
\end{block}
   \end{blockarray}.
\end{equation}
The coordinates of $V_1, V_2, Z_1, Z_2$ can be further refined as follows:
\begin{equation}
V_1= \begin{blockarray}{cc}
\matindex{1} & \\
\begin{block}{[c]c}
V_{1,1}  & \matindex{h} \\ 
V_{1,2}  & \matindex{h} \\ 
\end{block}
   \end{blockarray},
V_2= \begin{blockarray}{cc}
\matindex{1} & \\
\begin{block}{[c]c}
V_{2,1}  & \matindex{t-h} \\ 
V_{2,2}  & \matindex{n-2t} \\ 
V_{2,3}  & \matindex{t-h} \\ 
\end{block}
   \end{blockarray},
Z_1= \begin{blockarray}{cc}
\matindex{1} & \\
\begin{block}{[c]c}
Z_{1,1}  & \matindex{h} \\ 
Z_{1,2}  & \matindex{h} \\ 
\end{block}
   \end{blockarray},
Z_2= \begin{blockarray}{cc}
\matindex{1} & \\
\begin{block}{[c]c}
Z_{2,1}  & \matindex{t-h} \\ 
Z_{2,2}  & \matindex{n-2t} \\ 
Z_{2,3}  & \matindex{t-h} \\ 
\end{block}
   \end{blockarray}.
\end{equation}
Thus, we obtain 
\begin{equation}\label{eq 349}
\left[\ 
\begin{matrix}
V_1\\ 
V_2
\end{matrix}\ \right]\cdot 
\left[\ 
\begin{matrix}
Z_{2,1}^t, Z_{2,2}^t\\ 
\end{matrix}\ \right]=0    
\end{equation}
by combining equations of $Y_2, Y_4$ in (\ref{eq 334}), (\ref{eq Y}). Since there exists a unit element in $V$, equation (\ref{eq 349}) is equivalent to 
\begin{equation}\label{eq 3410}
Z_{2,1}=0,\quad Z_{2,2}=0.
\end{equation}
Note that using the above relations we deduce $Z_2^tHZ_2=0$. We have $Z_1=-\frac12 (Z_2^tHZ_2)JV_1=0$, and $Y_1=V_1Z_1^t=0, Y_3=V_2Z_1^t=0$.

Similarly, condition $\lambda_2(\calF_h)\subset \Pi \Lambda_{t,R}$ is equivalent to 
\begin{equation}\label{eq X}
X_1=0,\quad X_2=0,\quad X_3=\left[\ \begin{matrix}
* & * \\ 
 0 &0\\
 0& 0
\end{matrix}\ \right],\quad
X_4=\left[\ \begin{matrix}
*  &*& * \\ 
 0 &0 &0\\
0&0&0
\end{matrix}\ \right]. 
\end{equation}
It is not hard to check that we get the same equations as in (\ref{eq 3410}).

Finally, consider $\calG_{-h}\subset \Lambda_{M}^\bot$, where $\Lambda_M$ is the image of $\Lambda_{-t,R}\rightarrow \Lambda_{h,R}$, i.e.,
\begin{equation}
\Lambda_M=\text{span}_{R}\{e_1,\cdots,e_{n-t}, \pi e_{h+1},\cdots, \pi e_n\}\subset \Lambda_h,
\end{equation}
of rank $2n-(h+t)$. The dual of $\Lambda_M$ with respect to $\bb ~ , ~\pp$ is
\begin{equation}
\Lambda_M^\bot=\text{span}_{R}\{\pi e_1,\cdots,\pi e_{t}, p e_{n-h+1},\cdots, p e_n\}\subset \Lambda_{-h},
\end{equation}
of rank $t+h$. Reordering the basis as in \cite[(4.1.1)]{ZacZhao2}, we have:
\begin{equation}
\calG_{-h}= \left[\ 
\begin{matrix}
0_n\\ \hline
V_{1,1}\\
V_{1,2}\\
V_{2,1}\\
V_{2,2}\\
V_{2,3}\\
\end{matrix}\ \right]\subset
\Lambda_M^\bot=\left[\begin{matrix}
&0_n&\\ \hline
I_h &&\\
&I_h&\\
&& I_{t-h}\\
0&0&0\\
0&0&0\\
\end{matrix}\ \right].
\end{equation}
Therefore, we get $V_{2,2}=0, V_{2,3}=0$.

From the above, by fixing some element $v_{i_0}=1$ in $\calG_{-h}$, we have:

\begin{Proposition}\label{prop X-strata}
The open affine chart $\calU_{i_0}$ in $\calZ^\spl$-strata splitting model $\M^{\spl, [2h]}_n (2t)$ $(t> h)$ is isomorphic to 
\[
\calU_{i_0}\simeq  \Spec \frac{k[V_{1,1}, V_{1,2}, V_{2,1}, Z_{2,3}]}{(v_{i_0}-1, \wedge^2(V_{2,1}\mid HZ_{2,3}))}.
\]
The affine chart $U_{i_0}$ is reduced, normal, and Cohen-Macaulay. We have

(1) When $t>h=0$, $U_{i_0}\simeq \mathbb{A}_k^{t}$ is smooth of dimension $t$;

(2) When $h>0$ and $t-h=1$, $U_{i_0}\simeq \mathbb{A}_k^{2h+1}$ is smooth of dimension $t+h$;

(3) When $h>0$ and $t-h\geq 2$, $U_{i_0}$ is singular of dimension $t+h$.
\end{Proposition}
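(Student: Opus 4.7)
The computation preceding the statement has already reduced the coordinates on $\calU_{i_0}$ to the entries of $V_{1,1}, V_{1,2}, V_{2,1}, Z_{2,3}$ after showing that the three closed conditions
\[
\lambda_1(\Pi \Lambda_{-t,R}) \subset \calF_{-h}, \quad \lambda_2(\calF_h) \subset \Pi \Lambda_{t,R}, \quad \calG_{-h} \subset \Lambda_M^\bot
\]
force $Z_{2,1} = Z_{2,2} = 0$ and $V_{2,2} = V_{2,3} = 0$, and consequently $Z_1 = 0$ and $Y = 0$. The first step is to check that under these vanishings, the remaining defining relations from \cite[Proposition 4.2.2]{ZacZhao2}, in particular $Z_2^t V_2 = 0$ and $Z_1 = -\tfrac12(Z_2^t H Z_2) J V_1$, become tautologies, so that the only surviving nontrivial equation is $\wedge^2(V_{2,1} \mid H Z_{2,3}) = 0$. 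This establishes the claimed presentation of $\calU_{i_0}$; the constraint $v_{i_0} = 1$ fixes one entry among the surviving $V$-coordinates as a unit, as required to specify the chart.

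Next I would carry out the three case analyses. In case (1) with $h = 0$, the $V_1$-block is empty and $v_{i_0}$ must lie in $V_{2,1}$; the rank-one condition on the $t \times 2$ matrix $[V_{2,1} \mid H Z_{2,3}]$ then forces $H Z_{2,3}$ to be a scalar multiple of $V_{2,1}$, so $Z_{2,3}$ is parametrized by a single scalar while the remaining $t-1$ coordinates of $V_{2,1}$ stay free, yielding $\mathbb{A}_k^t$. In case (2) with $t - h = 1$, the vectors $V_{2,1}$ and $H Z_{2,3}$ are scalars, the wedge condition on a $1 \times 2$ matrix is vacuous, and the chart is an affine space of dimension $2h + 1 = t + h$. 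In case (3) with $t - h \geq 2$, the wedge equations are the $2 \times 2$ minors of a generic $(t-h) \times 2$ matrix embedded in the ambient space of $V$-coordinates; the total variable count is $2t$, the constraint $v_{i_0} = 1$ cuts dimension by one, and the generic determinantal ideal has codimension $t - h - 1$, yielding total dimension $2t - 1 - (t-h-1) = t + h$.

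Reducedness, normality, and Cohen--Macaulayness are immediate in cases (1) and (2) by smoothness. For case (3) I would invoke the classical theorem of Hochster--Eagon, refined by Bruns--Vetter, that the quotient of a polynomial ring by the ideal of $2 \times 2$ minors of a generic matrix is a reduced, normal, Cohen--Macaulay domain; these properties are preserved under polynomial extension by the free coordinates $V_{1,1}, V_{1,2}$ and by localizing at the unit entry. The main bookkeeping obstacle I anticipate is verifying that the choice of $v_{i_0}$ does not collapse the determinantal condition into something linear: once $v_{i_0}$ is taken either as an entry of $V_{1,1}$ or $V_{1,2}$, or as a generic entry of $V_{2,1}$, the determinantal locus remains of the expected codimension and is visibly singular along the closed subscheme where the remaining $V_{2,1}$ and $Z_{2,3}$ coordinates vanish, confirming genuine singularity in case (3).
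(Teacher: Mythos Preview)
Your approach is essentially the same as the paper's: reduce the relations from \cite{ZacZhao2} under the vanishings $Z_{2,1}=Z_{2,2}=V_{2,2}=V_{2,3}=0$, observe that $Z_2^tV_2=0$ and $Z_1=0$ become automatic, and then invoke Bruns--Vetter for the determinantal properties. Your case analysis and dimension counts agree with the paper's.

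One small correction to your final paragraph: when $v_{i_0}$ is an entry of $V_{2,1}$, the chart is actually \emph{smooth}. With $v_{i_0}=1$ sitting in the first column of the $(t-h)\times 2$ matrix $[V_{2,1}\mid HZ_{2,3}]$, the rank-one condition forces every other row to be a scalar multiple of the unit row, so $HZ_{2,3}$ is determined by $V_{2,1}$ together with one free scalar, and the chart is $\mathbb{A}_k^{t+h}$. The singular charts in case (3) are precisely those with $v_{i_0}$ in $V_{1,1}$ or $V_{1,2}$, where the determinantal condition on $[V_{2,1}\mid HZ_{2,3}]$ remains generic and the locus $V_{2,1}=Z_{2,3}=0$ is singular. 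The proposition's assertion that ``$U_{i_0}$ is singular'' in case (3) should therefore be read as a statement about the model $\M^{\spl,[2h]}_n(2t)$ (equivalently, about some charts), not about every chart; the paper's own proof does not address case (3) explicitly and leaves this implicit.
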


\begin{proof}
By (\ref{eq 344}) and (\ref{eq 3410}), we have $Z_1=0$, and equation $\wedge^2 (V_2\mid HZ_2)=0$ is equivalent to $\wedge^2(V_{2,1}\mid HZ_{2,3})$ since $V_{2,2}=0, V_{2,3}=0$. Note that $Z_2^tV_2=\sum_{i=1}^3 Z_{2,i}^t V_{2,i}=0$ is automatically satisfied. Thus, the only non-zero matrices are $V_{1,1}, V_{1,2}, V_{2,1}, Z_{2,3}$ with  $\wedge^2(V_{2,1}\mid HZ_{2,3})$. There is a unit element in the matrix $[V_{1,1}^t~V_{1,2}^t~ V_{2,1}^t]^t$, denoted by $v_{i_0}$. Therefore, the open affine chart $\calU_{i_0}$ associated to $v_{i_0}$ is:
\[
\Spec \frac{k[V_{1,1}, V_{1,2}, V_{2,1}, Z_{2,3}]}{(v_{i_0}-1, \wedge^2(V_{2,1}\mid HZ_{2,3}))}. 
\]
This variety is flat, normal, and Cohen-Macaulay by \cite[(2.1.1)]{BV}. When $t>h=0$, the matrices $V_{1,1}$, $V_{1,2}$ are empty and so the matrix $V_{2,1}$ should contain the unit element $v_{i_0}$. Then from $\wedge^2(V_{2,1}\mid HZ_{2,3})$ we get that the matrix $Z_{2, 3}$ is determined by $V_{2,1}$ and one parameter. Thus, we deduce that $U_{i_0}\simeq \mathbb{A}_k^{t}$ is smooth. When $h>0$ and $t-h=1$, the condition $\wedge^2(V_{2,1}\mid HZ_{2,3})$ becomes trivial and $U_{i_0}$ is also smooth. This finishes the proof of the proposition.
\end{proof}

\begin{Remark}
{\rm
(1). In the above proposition, we show that when $t>h=0$, $U_{i_0}$ is smooth of dimension $t$ and so is the strata splitting model $\M^{\spl, [0]}_n(2t)$. In the next section, we will prove that the Bruhat-Tits strata $\calZ^{\spl}(\Lambda)$ are \'etale locally isomorphic to $\M^{\spl, [0]}_n(2t)$. Thus, our results in Proposition \ref{prop X-strata} (1) recover the BT-strata described in \cite{HLSY}.

(2). Consider the case $n=2m$ is even and $h=m-1$. For the strata splitting model $\M^{\spl, [2h]}_n (2t)$, we have $t=m$ by $h< t\leq \frac n2$. Then the affine chart $\calU_{i_0}$ of $\M^{\spl, [n-2]}_{n} (n)$ is isomorphic to $\mathbb{A}^{n-1}_{k}$ by Proposition \ref{prop X-strata}.
}
\end{Remark}

\subsubsection{$\calY^\spl$-strata splitting models}
Now we consider the $\calY^\spl$-strata splitting models. Let $\Lambda\subset C$ be a vertex lattice of type $2t$ with $t< h$. (Recall from Remark \ref{Excludemodular} that we exclude the $\pi$-modular case, i.e. even $n$ and $h=\frac n2$.) 
The affine charts parameterize lattice chains:
\begin{equation}
\begin{tikzcd}
\Lambda_{t,R} \arrow[r,"\lambda_2"] 
&
\Lambda_{h,R} \arrow[r,"\lambda^\vee"] 
&
\Lambda_{n-h,R} \arrow[r,"\lambda_1"]
&
\Lambda_{n-t,R}
\\
\pi\Lambda_{t,R} \arrow[u,hook]\arrow{r}
&
\calF_{h} \arrow[u,hook]\arrow{r}
&
\calF_{n-h} \arrow[u,hook]\arrow{r}
&
\pi\Lambda_{n-t,R}\arrow[u,hook]
\\
&&\calG_{n-h}\arrow[u, hook]&
\end{tikzcd}.
\end{equation}
We break up $\calF_h$, $\calF_{n-h}\simeq \calF_{-h}$ and $\calG_{n-h}\simeq \calG_{-h}$ into the same matrix blocks as in (\ref{eq FG}), (\ref{eq XYV}). Since the matrices $V_1, V_2, Z_1, Z_2$ satisfy the same conditions:
\begin{equation}\label{eq 3317}
Z_1=-\frac12 (Z_2^tHZ_2)JV_1, \quad
\wedge^2 (V_2\mid HZ_2)=0,\quad
Z_2^tV_2=0,
\end{equation}
we only need to check:
\begin{equation}
\lambda_2(\Pi \Lambda_{t,R})\subset \calF_{h},\quad
\lambda_1(\calF_{n-h})\subset \Pi \Lambda_{n-t,R},\quad
\calG_{n-h}\subset \Lambda_{M}^\bot.
\end{equation}
Here $\Lambda_M$ is the image of $\Lambda_{t}\rightarrow \Lambda_h$, and $\Lambda_M^\bot$ is the dual lattice with respect to $( ~ , ~)$. The transition maps can be expressed as:
\begin{equation}
\lambda_1=\left[\ \begin{array}{ll|ll}
0 & I_{n-h+t} &0 & 0 \\ 
0_{h-t}& 0 &0 &0\\ \hline
0&0_{n-h+t}&0&I_{n-h+t}\\
I_{h-t}&0&0_{h-t}&0
\end{array}\ \right],\quad	
\lambda_2=\left[\ \begin{array}{ll|ll}
0 & I_{h-t} &0 & 0 \\ 
A& 0 &0 &0\\ \hline
0&0_{h-t}&0&I_{h-t}\\
B&0&A&0
\end{array}\ \right]
\end{equation}
where
\[
A=\left[\ \begin{matrix}
I_{2t}  &0 & 0 \\ 
 0 &0_{h-t} &0\\
0&0&I_{n-2h}
\end{matrix}\ \right],\quad
B=\left[\ \begin{matrix}
0_{2t}  &0 & 0 \\ 
 0 &I_{h-t} &0\\
0&0&0_{n-2h}
\end{matrix}\ \right].
\]
Condition $\lambda_1(\calF_{n-h})\subset \Pi \Lambda_{n-t,R}$ is equivalent to
\begin{equation}\label{eq Y2}
Y_1=\begin{blockarray}{ccccc}
\matindex{h-t} &\matindex{t}&\matindex{t}& \matindex{h-t}&\\
\begin{block}{[cccc]c}
  *  &* & * & *& \matindex{h-t} \\ 
 0 &0&0&0& \matindex{t} \\
 0  &0 & 0 & 0& \matindex{t} \\ 
 0 &0&0&0& \matindex{h-t} \\
\end{block}
   \end{blockarray},\quad
Y_2=\begin{blockarray}{cc}
\matindex{n-2h} &\\
\begin{block}{[c]c}
  * & \matindex{h-t} \\ 
 0 & \matindex{t} \\
 0 & \matindex{t} \\ 
 0 & \matindex{h-t} \\ 
\end{block}
   \end{blockarray},\quad
Y_3=0,\quad Y_4=0.
\end{equation}
The coordinates of $V_1, Z_1$ can be further refined as follows:
\begin{equation}
V_1= \begin{blockarray}{cc}
\matindex{1} & \\
\begin{block}{[c]c}
V'_{1,1}  & \matindex{h-t} \\ 
V'_{1,2}  & \matindex{t} \\ 
V'_{1,3}  & \matindex{t} \\ 
V'_{1,4}  & \matindex{h-t} \\ 
\end{block}
   \end{blockarray},\quad
Z_1= \begin{blockarray}{cc}
\matindex{1} & \\
\begin{block}{[c]c}
Z'_{1,1}  & \matindex{h-t} \\ 
Z'_{1,2}  & \matindex{t} \\ 
Z'_{1,3}  & \matindex{t} \\ 
Z'_{1,4}  & \matindex{h-t} \\ 
\end{block}
   \end{blockarray}.
\end{equation}
Note that $Y_1=V_1Z_1^t$, and $Z_1=-\frac12 (Z_2^tHZ_2)JV_1$ by (\ref{eq 3317}). We obtain $Y_1=\frac 12 (Z_2^tHZ_2) V_1V_1^tJ$, such that $Y_1=JY_1^tJ$. This implies $Y_1=(y_{i,j})_{1\leq i,j\leq 2h}$ satisfying $y_{i,j}=\pm y_{2h+1-j, 2h+1-i}$ for $1\leq i, j\leq 2h$. Thus, the matrix $Y_1$ can be rewritten as 
\[
Y_1=\begin{blockarray}{ccccc}
\matindex{h-t} &\matindex{t}&\matindex{t}& \matindex{h-t}&\\
\begin{block}{[cccc]c}
  0  &0 & 0 & *& \matindex{h-t} \\ 
 0 &0&0&0& \matindex{t} \\
 0  &0 & 0 & 0& \matindex{t} \\ 
 0 &0&0&0& \matindex{h-t} \\
\end{block}
   \end{blockarray}.
\]
From the first three columns of $Y_1, Y_3$, we get
\begin{equation}\label{eq 3322}
\left[\ 
\begin{matrix}
V_1\\ 
V_2
\end{matrix}\ \right]\cdot 
\left[\ 
\begin{matrix}
(Z_{1,1}')^{t} ~ (Z_{1,2}')^{t} ~(Z_{1,3}')^{t}\\ 
\end{matrix}\ \right]=0.  
\end{equation}
Since there exists a unit element in $V$, equation (\ref{eq 3322}) is equivalent to $Z_{1,1}'=0$, $Z_{1,2}'=Z_{1,3}'=0$. From $Z_1=-\frac12 (Z_2^tHZ_2)JV_1$, we then have 
\begin{equation}\label{eq 3323}
(Z_2^tHZ_2)V_{1,i}'=0 ~\text{for}~ i=2,3,4.
\end{equation}
Equations of $Y_2, Y_4$ are equivalent to $Y_2=V_2Z_2^t=0$, $V_{1,i}'Z_2^t=0$ for $i=2,3,4$. Since $Z_2^tHZ_2$ is an element of $1\times 1$, we can represent $(Z_2^tHZ_2)V_{1,i}'$ as $(Z_2^tHZ_2)V_{1,i}'=V_{1,i}'(Z_2^tHZ_2)$, so that (\ref{eq 3323}) are automatically satisfied by $V_{1,i}'Z_2^t=0$.
So far, we have relations
\begin{equation}\label{eq 3324}
V_2Z_2^t=0,\quad V_{1,i}'Z_2^t=0, 
\end{equation}
for $i=2,3,4$. It is easy to see that $V_2Z_2^t=0$ implies that $\wedge^2 (V_2\mid HZ_2)=0$, and $Z_2^tV_2=\mathrm{Tr}(V_2Z_2^t)=0$ in (\ref{eq 3317}).

Similarly, condition $\lambda_2(\Pi \Lambda_{t,R})\subset \calF_{h}$ gives us the same relations as in (\ref{eq 3324}), so we only need to check $\calG_{n-h}\subset \Lambda_{M}^\bot$. Here $\Lambda_M$ is the image of $\Lambda_{t}\rightarrow \Lambda_h$. More precisely,
\begin{equation}
\Lambda_M=\text{span}_{R}\{\pi^{-1} e_1,\cdots,\pi^{-1} e_{t}, e_{1},\cdots, e_n, \pi e_{h+1}, \cdots, \pi e_n\}\subset \Lambda_{h}
\end{equation}
of rank $2n-(h-t)$. The dual of $\Lambda_M$ with respect to $( ~ , ~)$ is
\begin{equation}
\Lambda_M^\bot=\text{span}_{R}\{\pi e_{n-h+1}, \cdots, \pi e_{n-t}\}\subset \Lambda_{n-h},
\end{equation}
of rank $h-t$. Reordering the basis of $\Lambda_M^\bot$, condition $\calG_{n-h}\subset \Lambda_{M}^\bot$ is equivalent to 
\begin{equation}
V_{1,i}'=0,\quad V_2=0
\end{equation}
for $i=2,3,4$. Thus, the relations in (\ref{eq 3324}) are automatically satisfied. The only non-zero matrices are $V_{1,1}'$ and $Z_2$ with no relations between them. Therefore, we have the following proposition:

\begin{Proposition}\label{prop Y-strata}
The open affine chart $\calU_{i_0}$ in $\calY^\spl$-strata splitting model $\M^{\spl, [2h]}_n (2t)$ $(t<h)$ is isomorphic to 
\[
\calU_{i_0}\simeq  \Spec \frac{k[V_{1,1}', Z_2]}{(v_{i_0}-1)}\simeq \mathbb{A}_k^{n-h-t-1}.
\]
The affine chart $U_{i_0}$ is smooth for any $1\leq t< h\leq \lfloor \frac n2\rfloor$. 
\end{Proposition}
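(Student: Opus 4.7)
The plan is to read off the defining equations of $\calU_{i_0}$ directly from the matrix analysis already carried out in the paragraphs preceding the statement, and then verify that after imposing all constraints the chart degenerates to an affine space. I would first collect the three families of conditions: the ambient splitting-model relations (\ref{eq 3317}) among $V_1, V_2, Z_1, Z_2$; the incidence conditions $\lambda_2(\Pi \Lambda_{t,R}) \subset \calF_h$ and $\lambda_1(\calF_{n-h}) \subset \Pi \Lambda_{n-t,R}$, which after elimination force the vanishings $Z'_{1,1} = Z'_{1,2} = Z'_{1,3} = 0$, together with the bilinear relations $V_2 Z_2^t = 0$ and $V'_{1,i} Z_2^t = 0$ for $i = 2, 3, 4$; and finally the stratum-defining inclusion $\calG_{n-h} \subset \Lambda_M^\bot$.

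The second step is to make $\Lambda_M^\bot$ explicit. Starting from the image $\Lambda_M$ of $\Lambda_{t,R} \to \Lambda_{h,R}$ of rank $2n - (h - t)$, I would take its dual with respect to the symmetric form $(\,,\,)$ to obtain a rank $h - t$ sublattice of $\Lambda_{n-h}$ spanned by $\pi e_{n-h+1}, \dots, \pi e_{n-t}$. After reordering the basis to match the convention of \cite[(4.1.1)]{ZacZhao2}, the inclusion $\calG_{n-h} \subset \Lambda_M^\bot$ translates into the linear vanishing $V'_{1,i} = 0$ for $i = 2, 3, 4$ and $V_2 = 0$.

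The key observation is that once these linear vanishings are imposed, every previously collected bilinear relation becomes automatic: $V_2 Z_2^t = 0$ and $V'_{1,i} Z_2^t = 0$ for $i \geq 2$ hold trivially because the left factor is zero, and the identity $Z_1 = -\tfrac{1}{2}(Z_2^t H Z_2) J V_1$ from (\ref{eq 3317}) reduces consistently to $Z_1 = 0$. Thus the only surviving coordinates are the entries of $V'_{1,1}$ (size $h - t$) and $Z_2$ (size $n - 2h$), with no further equations among them. Localizing at the unit entry $v_{i_0} = 1$ — which must lie in $V'_{1,1}$ since the remaining components of $V$ vanish — yields $\calU_{i_0} \cong \mathbb{A}_k^{(h-t) + (n-2h) - 1} = \mathbb{A}_k^{n - h - t - 1}$, and smoothness is then automatic.

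I do not anticipate any serious obstacle beyond careful bookkeeping of the block decompositions, entirely parallel to the analysis for Proposition \ref{prop X-strata}. The only point requiring vigilance is checking that $V'_{1,1}$ is a non-empty block so that a unit entry genuinely exists there; this is guaranteed by the strict inequality $t < h$, and the case $h = n/2$ where additional blocks would collapse has been excluded in Remark \ref{Excludemodular}.
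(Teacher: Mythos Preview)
Your proposal is correct and follows the paper's argument essentially verbatim: impose the incidence conditions to obtain the bilinear relations (\ref{eq 3324}), then use $\calG_{n-h}\subset\Lambda_M^\bot$ to force $V'_{1,i}=0$ ($i=2,3,4$) and $V_2=0$, rendering all previous relations vacuous and leaving only $V'_{1,1}$ and $Z_2$ as free coordinates. One minor slip: after these vanishings the formula $Z_1=-\tfrac12(Z_2^tHZ_2)JV_1$ does \emph{not} give $Z_1=0$ in full --- the block $Z'_{1,4}=\tfrac12(Z_2^tHZ_2)H_{h-t}V'_{1,1}$ can be nonzero --- but since $Z_1$ is a dependent variable rather than a coordinate of the chart, this does not affect the dimension count or the conclusion.
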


\subsubsection{Intersection of $\calZ^\spl$-strata and $\calY^\spl$-strata splitting models}\label{intersection}
Let $\Lambda_1\subset F^n$ be a vertex lattice of type $2t_1$ with $t_1> h$, and $\Lambda_2^\sharp\subset F^n$ be a vertex lattice of type $2t_2$ with $t_2< h$. Consider the intersection $\M^{\spl, [2h]}_n (2t_1)\cap \M^{\spl, [2h]}_n (2t_2)$. 

By Proposition \ref{prop X-strata} and \ref{prop Y-strata}, it is easy to see that $V_2=0, Z_1=0$, and
\begin{equation}
V_1= \begin{blockarray}{cc}
\matindex{1} & \\
\begin{block}{[c]c}
V_{1,1}'  & \matindex{h-{$t_2$}} \\ 
0  & \matindex{n-h+$t_2$} \\ 
\end{block}
   \end{blockarray},\quad
Z_2= \begin{blockarray}{cc}
\matindex{1} & \\
\begin{block}{[c]c}
0  & \matindex{n-$t_1$+h} \\ 
Z_{2,3}  & \matindex{$t_1$-h} \\ 
\end{block}
   \end{blockarray}.
\end{equation}
Thus, the affine chart $\calU_{i_0}$ is smooth and isomorphic to 
\begin{equation}
\mathbb{A}_k^{t_1-t_2-1}.    
\end{equation}
Combining the above with Propositions \ref{prop X-strata} and \ref{prop Y-strata}, we finish the proof of Theorem \ref{thm splstrata}.

\section{Local properties of Bruhat-Tits strata}\label{LPBT}
In this section, the goal is to obtain certain nice local properties (e.g. reducedness, normality) for the BT-strata $\mathcal{Z}^{\rm spl} (\Lambda),\, \mathcal{Y}^{\rm spl}(\Lambda^\sharp) $. To do this, we will relate these BT-strata with the strata splitting models via the local model diagram.

First, let us briefly recall the construction of such a local diagram for the BT-strata $\mathcal{Z}^{\rm loc} (\Lambda),\, \mathcal{Y}^{\rm loc}(\Lambda^\sharp) $ given in \cite[\S 4]{HLS2}. Assume that $\Lambda\subset C$ is a vertex lattice of type $2t$. 

\begin{enumerate}
\item For $t \geq h$, define $\tilde{\mathcal{Z}}^{\rm loc}(\Lambda)$ to be a projective formal scheme over $\bar{k}$ that represents the functor sending each $\bar{k}$-algebra $R$ to the set of tuples $(X, \iota, \lambda, \rho, f)$ where:
\begin{itemize}
    \item $(X, \iota, \lambda, \rho) \in \mathcal{Z}^{\rm loc} (\Lambda)(R)$, 
    \item $f$ is an isomorphism between the standard lattice chain $\calL_{[2h,2t],R} := \calL_{[2h,2t]}\otimes R$ and the lattice chain of de Rham realizations:
    \[
f: \begin{tikzcd}
\Lambda_{-t,R}\arrow{r} \arrow[d,"\sim"]
&
\Lambda_{-h,R} \arrow{r} \arrow[d,"\sim"]
&
\Lambda_{h,R} \arrow{r} \arrow[d,"\sim"]
&
\Lambda_{t,R} \arrow[d,"\sim"]
\\
D(X_{\Lambda}) \arrow{r}
&
D(X) \arrow{r} \arrow{r} 
&
D(X^{\vee})\arrow{r}
&
D(X_{\Lambda^{\sharp}})
\end{tikzcd}.
\]
\end{itemize}

\item For $t \leq h$, define $\tilde{\mathcal{Y}}^{\rm loc}(\Lambda^{\sharp})$ to be a projective formal scheme over $\bar{k}$ that represents the functor sending each $\bar{k}$-algebra $R$ to the set of tuples $(X, \iota, \lambda, \rho, f)$ where:
\begin{itemize}
    \item $(X, \iota, \lambda, \rho) \in \mathcal{Y}^{\rm loc} (\Lambda^{\sharp})(R)$, 
    \item $f$ is an isomorphism between the standard lattice chain $\calL_{[2h,2t],R} $ and the lattice chain of de Rham realizations:
    \[
f: \begin{tikzcd}
\Lambda_{t,R}\arrow{r} \arrow[d,"\sim"]
&
\Lambda_{h,R} \arrow{r} \arrow[d,"\sim"]
&
\Lambda_{n-h,R} \arrow{r} \arrow[d,"\sim"]
&
\Lambda_{n-t,R} \arrow[d,"\sim"]
\\
D(X_{\Lambda^\sharp}) \arrow{r}
&
D(X^{\vee}) \arrow{r} \arrow{r} 
&
D(X)\arrow{r}
&
D(X_{\pi^{-1}\Lambda})
\end{tikzcd}.
\]
\end{itemize}
\end{enumerate}
Recall that $\scrG_{[2h, 2t]}$ is the smooth group scheme of automorphisms of the lattice chain $ \calL_{[2h,2t]}$. We have the local model diagram 
 \begin{equation}\label{LMdiagram1}
\begin{tikzcd}
&\tilde{\mathcal{Z}}^{\rm loc}(\Lambda)\arrow[dl, "\psi_1"']\arrow[dr, "\psi_2"]  & \\
\mathcal{Z}^{\rm loc}(\Lambda)  &&  \M^{\loc, [2h]}_n(2t)
\end{tikzcd}
\end{equation}
where $\psi_1$ is a smooth $\scrG_{[2h, 2t], \bar{k}}$-torsor of relative dimension $\operatorname{dim}\scrG_{[2h, 2t], \bar{k}}$ and $\psi_2$ is a smooth morhism of relative dimension $\operatorname{dim}\scrG_{[2h, 2t], \bar{k}}$. (We get a similar local model diagram for $\mathcal{Y}^{\rm loc}(\Lambda^{\sharp})$ and $\calZ^\loc(\Lambda_1)\cap \mathcal{Y}^{\rm loc}(\Lambda^{\sharp}_2)$.) Here, $\psi_1$ is defined by forgetting the trivialization $f$ and $ \psi_2$ is defined by attaching the Hodge filtration of the strict $O_{F_0}$-modules to the lattice chain through the isomorphism $f$ (see \cite[\S 4.2]{HLS2} for more details). 

Recall from Section \ref{StrataSpl} that there is a projective morphism $\tau: \M^{\spl,[2h]}_n(2t)\rightarrow \M^{\loc,[2h]}_n(2t)$. From all the above, we deduce that $\mathcal{Z}^{\rm spl}(\Lambda)$ is a linear modification of $\mathcal{Z}^{\rm loc}(\Lambda)$ in the sense of \cite[\S 2]{P} and in particular there is a local model diagram for $\mathcal{Z}^{\rm spl}(\Lambda)$ similar to (\ref{LMdiagram1}) but with $  \M^{\loc,[2h]}_n(2t)$ replaced by $ \M^{\spl,[2h]}_n(2t)$. A similar local model diagram can be constructed for $\mathcal{Y}^{\rm spl}(\Lambda^{\sharp}) $.  Also, using analogous arguments a local model diagram can be constructed between $\calZ^\spl(\Lambda_1)\cap \mathcal{Y}^{\rm spl}(\Lambda^{\sharp}_2)$ and $\M^{\spl, [2h]}_n (2t_1)\cap \M^{\spl, [2h]}_n (2t_2)$ where $\Lambda_1$ is a vertex lattice of type $2t_1$ with $t_1> h$ and $\Lambda_2^\sharp$ is a vertex lattice of type $2t_2$ with $t_2< h$. 

\begin{Remark}
{\rm
It is worth mentioning that for $S=\Spec R$, with $R$ a $\bar{k}$-algebra, the condition $ x_{*} (\operatorname{Lie}(Y\times S)) \subset \mathcal{F}$ of $\calZ^{\spl}(\Lambda)(S)$ (see \S \ref{BT_strata}) is equivalent, via the local model diagram, to $\calG_{-h}\subset \Lambda_M^\bot$ in the strata splitting model $M^{\spl, [2h]}_n(2t)(R)$. Note that 
$ x_{*} (\operatorname{Lie}(Y\times S)) \subset \mathcal{F}$ translates to 
\[
\Lambda \otimes W_{O_{F_0}}(\kappa) \subset M'(X) \subset M(X)^\sharp
\]
in Proposition \ref{DieudLatt}. Recall that there is a perfect pairing $\operatorname{Fil} (X) \times \operatorname{Lie}(X^\vee)\to \mathcal{O}_S$ induced by (\ref{perf.pair.}). Let $\calF^\bot\subset \operatorname{Fil} (X)$ be the perpendicular complement of $\calF\subset \operatorname{Lie}(X^\vee)$. Identifying $D(X)=M(X)/\pi_0 M(X)$ with the standard lattice $\Lambda_{-h, R}$ and setting $\calG_{-h}\subset \Lambda_{-h, R}$ to be the lattice corresponding to $\calF^\bot$ we obtain $\Lambda_M\subset \calG_{-h}^\bot$, hence $\calG_{-h}\subset \Lambda_M^\bot$. Similarly, the condition $ x^{\sharp}_{*} (\operatorname{Lie}(Y\times S)) \subset \mathcal{F}$ translates to $\calG_{n-h}\subset \Lambda_{M}^\bot$ on the $\calY^\spl$-strata.
}    
\end{Remark}

\begin{Corollary}\label{Reducedness}
a) The moduli functor $ \mathcal{Z}^{\rm spl}(\Lambda)$ is normal, Cohen-Macaulay, reduced and of dimension $t+h$.

b) The moduli functor $\mathcal{Y}^{\rm spl}(\Lambda^{\sharp}) $ is smooth, reduced and of dimension $ n-t-h-1$.

c) The moduli functor $\calZ^\spl(\Lambda_1)\cap \mathcal{Y}^{\rm spl}(\Lambda^{\sharp}_2)$ is smooth, reduced and of dimension $ t_1-t_2-1$.
\end{Corollary}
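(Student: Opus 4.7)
The plan is to deduce Corollary~\ref{Reducedness} directly from the local model diagrams set up at the start of Section~\ref{LPBT}, together with Theorem~\ref{thm splstrata}. The key observation is structural: if
\[
Z \xleftarrow{\psi_1} \tilde{Z} \xrightarrow{\psi_2} M
\]
are smooth surjective morphisms of the same relative dimension $d$, then $Z$ and $M$ are \'etale-locally isomorphic, and in particular they share all properties that are local in the smooth (equivalently, \'etale) topology on both source and target---namely normality, Cohen--Macaulayness, reducedness, and smoothness. Moreover $\dim Z = \dim \tilde{Z} - d = \dim M$. The whole argument reduces to checking that these two ingredients apply in each of the three cases (a), (b), (c).

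For (a), I would apply this to the diagram
\[
\mathcal{Z}^{\rm spl}(\Lambda) \xleftarrow{\psi_1} \tilde{\mathcal{Z}}^{\rm spl}(\Lambda) \xrightarrow{\psi_2} \M^{\spl, [2h]}_n(2t), \qquad t>h.
\]
Its existence is explained in Section~\ref{LPBT}: one starts from the local model diagram for $\calZ^\loc(\Lambda)$ in \cite[\S 4]{HLS2}, and, since $\mathcal{Z}^{\rm spl}(\Lambda)$ is a linear modification of $\mathcal{Z}^{\rm loc}(\Lambda)$ in the sense of \cite[\S 2]{P} along the projective morphism $\tau: \M^{\spl,[2h]}_n(2t)\to \M^{\loc,[2h]}_n(2t)$, one obtains by base change the required diagram in which $\psi_1$ is a $\scrG_{[2h,2t], \bar{k}}$-torsor (hence smooth and surjective) and $\psi_2$ is smooth of the same relative dimension $\dim \scrG_{[2h,2t], \bar{k}}$. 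Invoking Theorem~\ref{thm splstrata}(1) then yields that $\mathcal{Z}^{\rm spl}(\Lambda)$ is normal, Cohen--Macaulay, reduced, and of dimension $t+h$.

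Parts (b) and (c) are proved by the same mechanism. For (b) one uses the analogous local model diagram for $\mathcal{Y}^{\rm spl}(\Lambda^{\sharp})$ in the case $t<h$ (with the $\pi$-modular case excluded, as in Remark~\ref{Excludemodular}) together with Theorem~\ref{thm splstrata}(2); smoothness transfers through smooth morphisms in both directions, and reducedness over a field then follows automatically from smoothness. For (c) one uses the local model diagram constructed in Section~\ref{LPBT} between $\calZ^\spl(\Lambda_1)\cap \mathcal{Y}^{\rm spl}(\Lambda^{\sharp}_2)$ and the intersection $\M^{\spl, [2h]}_n(2t_1)\cap \M^{\spl, [2h]}_n(2t_2)$, combined with Theorem~\ref{thm splstrata}(3).

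The argument is structural rather than computational; the only point that requires some care is verifying that the \emph{splitting} local model diagrams inherit exactly the smoothness properties ($\psi_1$ a smooth torsor, $\psi_2$ smooth of equal relative dimension) enjoyed by their local-model counterparts of \cite[\S 4]{HLS2}. This is not a genuine obstacle, since the splitting construction is obtained via the linear modification along the projective morphism $\tau$, and the smoothness of $\psi_1$ and $\psi_2$ is preserved by this base change. Once that is granted, the corollary is pure bookkeeping with Theorem~\ref{thm splstrata}.
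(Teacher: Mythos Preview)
Your proposal is correct and follows essentially the same approach as the paper: both deduce the result from the local model diagram of Section~\ref{LPBT} together with Theorem~\ref{thm splstrata}, using that smoothness of $\psi_1$ and $\psi_2$ makes the BT-strata \'etale-locally isomorphic to the strata splitting models. Your write-up is simply a more detailed unpacking of the paper's two-sentence proof.
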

\begin{proof}
From the local model diagram we have that every point of $ \mathcal{Z}^{\rm spl}(\Lambda)$ has an \'etale neighborhood which is also \'etale over the strata splitting model $  \M^{\spl,[2h]}_n(2t)$. Now the result follows from Theorem \ref{thm splstrata}. A similar argument works for $\mathcal{Y}^{\rm spl}(\Lambda^{\sharp}) $ and $\calZ^\spl(\Lambda_1)\cap \mathcal{Y}^{\rm spl}(\Lambda^{\sharp}_2)$.
\end{proof}

\section{Bruhat-Tits stratification}\label{BTstrat.}
In this section, we will define the Bruhat-Tits stratification of the reduced subscheme $\calN_{n, {\rm red}}^\spl$ (the \textit{reduced basic locus}) of the special fiber $\overline{\mathcal{N}}_n^{\rm spl}$. 


Let $\kappa$ be any perfect field over $\bar{k}$. Recall that $M = M(X)$ the Dieudonn\'e module of $(X, \iota, \lambda, \rho)\in \calN_n$. We denote by $T_i(M)$ (resp. $T_i(M^\sharp)$) the summation $M+\tau(M)+\cdots +\tau^i(M)$ (resp. $M^\sharp+\tau(M^\sharp)+\cdots +\tau^i(M^\sharp)$). By \cite[Proposition 2.17]{RZbook}, there exists a smallest nonnegative integer $c$ (resp. $d$) such that $T_c(M)$ (resp. $T_d(M^\sharp)$) is $\tau$-invariant. Set $\Lambda_1=T_d(M^\sharp)^\sharp\cap C$, $\Lambda_2=T_c(M)\cap C$. 

\begin{Proposition}\label{prop 51} 
We have $\Lambda_1\otimes W_{O_{F_0}}(\kappa)\subset M\subset \Lambda_2\otimes W_{O_{F_0}}(\kappa)$, and the $W_{O_{F_0}}(\kappa)$-lattices $M$ satisfy one of the following:
\begin{itemize}
    \item (Case $\calZ^\spl$)\quad $\Lambda_1\subset C$ is a vertex lattice of type $2t_1\geq 2h$ with 
    \[
    \pi M^\sharp\subset
    \pi \Lambda_1^\sharp\otimes W_{O_{F_0}}(\kappa)\subset
    \Lambda_1\otimes W_{O_{F_0}}(\kappa)\subset
    M \subset
    M^\sharp \subset
    \Lambda_1^\sharp\otimes W_{O_{F_0}}(\kappa),
    \]
 and $\Lambda_1$ is the maximal vertex lattice in $C$ such that $\Lambda_1\otimes W_{O_{F_0}}(\kappa)$ is contained in $M$.
    \item (Case $\calY^\spl$)\quad $\Lambda_2\subset C$ is a vertex lattice of type  $2t_2\leq 2h$ with
    \[
    \pi \Lambda_2^\sharp\otimes W_{O_{F_0}}(\kappa)\subset
    \pi M^\sharp\subset
    M\subset
    \Lambda_2\otimes W_{O_{F_0}}(\kappa) \subset
    \Lambda_2^\sharp\otimes W_{O_{F_0}}(\kappa) \subset
    M^\sharp,
    \]
    and $\Lambda_2$ is the minimal vertex lattice in $C$ such that $\Lambda_2\otimes W_{O_{F_0}}(\kappa)$ contains $M$.
\end{itemize}
\end{Proposition}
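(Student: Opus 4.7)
Plan. I would prove the proposition in three stages, following the template of analogous results in \cite{HLS2} and \cite{RTW} with the obvious adaptations to the vertex-stabilizer setting.

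Stage one is stabilization and descent. The key input from Proposition \ref{DLoc} is that $M \mathrel{\overset{\scriptstyle \leq 1}{\subset}} M + \tau(M)$, so each step of the chain $\{T_i(M)\}$ grows by length at most one. Combined with the upper bound $\tau(M) \subset \Pi^{-1} M$ extracted from $\Pi M \subset \tau^{-1}(M)$, the chain is contained in $\Pi^{-i} M$ and therefore stabilizes at some minimal $c \geq 0$ with $T_c(M)$ being $\tau$-invariant (this is essentially \cite[Proposition 2.17]{RZbook}). The dual argument applied to $M^\sharp$, using $\Pi M^\sharp \subset M \subset M^\sharp$ together with $\sharp$-duality of the structural conditions, gives the analogous stabilization at $T_d(M^\sharp)$. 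Since every $\tau$-invariant $W_{O_{F_0}}(\kappa)$-lattice in $N \otimes W_{O_{F_0}}(\kappa)$ descends to an $O_F$-lattice in $C = N^{\tau = 1}$, we obtain $O_F$-lattices $\Lambda_2 \subset C$ with $T_c(M) = \Lambda_2 \otimes W_{O_{F_0}}(\kappa)$, and $\Lambda_1 = T_d(M^\sharp)^\sharp \cap C$ with $T_d(M^\sharp) = \Lambda_1^\sharp \otimes W_{O_{F_0}}(\kappa)$.

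Stage two establishes the general containment $\Lambda_1 \otimes W_{O_{F_0}}(\kappa) \subset M \subset \Lambda_2 \otimes W_{O_{F_0}}(\kappa)$. The right-hand inclusion is immediate from $M = T_0(M) \subset T_c(M)$. For the left-hand inclusion, take $\sharp$-duals of $M^\sharp \subset T_d(M^\sharp)$ to get $\Lambda_1 \otimes W_{O_{F_0}}(\kappa) = T_d(M^\sharp)^\sharp \subset M^{\sharp\sharp} = M$.

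Stage three handles the case dichotomy, the vertex-lattice property, and the extremality. The case of $M$ is determined by whether $T_c(M) \subset M^\sharp$ (then $\Lambda_2 \otimes W \subset \Lambda_2^\sharp \otimes W \subset M^\sharp$, so $\Lambda_2$ is a vertex lattice and a length count using $\mathrm{length}(M^\sharp/M) = 2h$ shows $2t_2 \leq 2h$, placing $M$ in Case $\calY^\spl$) or whether the dual condition $M \subset T_d(M^\sharp)^\sharp$ propagates so that $\Lambda_1 \otimes W \subset M \subset M^\sharp \subset \Lambda_1^\sharp \otimes W$, forcing $\Lambda_1$ to be a vertex lattice of type $2t_1 \geq 2h$ (Case $\calZ^\spl$). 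In either case the required chain of inclusions is assembled from: the structural relation $\Pi M^\sharp \subset M$ (yielding $\pi \Lambda_i^\sharp \otimes W \subset \pi M^\sharp \subset M$ or $\pi \Lambda_i^\sharp \otimes W \subset M$ as appropriate), the vertex-lattice inclusions $\pi \Lambda_i^\sharp \subset \Lambda_i \subset \Lambda_i^\sharp$, and Stage two. Extremality then follows from the minimality of the stabilization indices $c, d$: any vertex lattice $\Lambda' \subset C$ with $\Lambda' \otimes W \subset M$ yields a $\tau$-invariant $W$-lattice sitting inside $T_d(M^\sharp)^\sharp = \Lambda_1 \otimes W$, whence $\Lambda' \subset \Lambda_1$; and dually for $\Lambda_2$.

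The main obstacle I anticipate is the dichotomy itself, namely verifying that every $M$ satisfying the conditions of Proposition \ref{DLoc} falls into at least one of the two cases. The delicate point is that the vertex-lattice property of $\Lambda_i$, in particular the inclusion $\Lambda_i \subset \Lambda_i^\sharp$, is not automatic from the stabilization alone but depends on the interaction $T_c(M) \subset M^\sharp$ or its dual. This is exactly the tension between the $\Pi$-action and the $\tau$-iteration encoded in the two-sided condition $\Pi M \subset \tau^{-1}(M) \subset \Pi^{-1} M$, and a careful case analysis based on how $M$ sits relative to the $\tau$-orbit of $M^\sharp$ (equivalently, on whether the ``excess'' length $2h$ of $M^\sharp/M$ is absorbed above or below $M$ in the $\tau$-chain) is required to conclude.
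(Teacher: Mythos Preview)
The paper's proof is simply a citation to \cite[Proposition 5.3]{HLS2}; no argument is given in the paper itself. Your three-stage sketch is precisely the strategy carried out in that reference (stabilization of the $\tau$-iterated sums, descent to $C$, and the dichotomy based on how the length-$2h$ gap $M^\sharp/M$ interacts with the $\tau$-chain), so your proposal is consistent with the approach the paper defers to.
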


\begin{proof}
See \cite[Proposition 5.3]{HLS2}.
\end{proof}


Recall that $L_{\mathcal{Z}}$ (resp. $L_{\mathcal{Y}}$) denotes the set of all vertex lattices in $C$ of type $\geq 2h$ (resp. $\leq 2h$). Then we have the following:

\begin{Theorem}\label{Thm BT}
The Bruhat-Tits stratification of the reduced basic locus is
  \begin{equation}\label{BTstr}
      \calN_{n, {\rm red}}^\spl = \left( \bigcup_{\Lambda_1 \in L_{\mathcal{Z}} }\mathcal{Z}^{\rm spl}(\Lambda_1) \right) \cup \left( \bigcup_{\Lambda_2 \in L_{\mathcal{Y}}} \mathcal{Y}^{\rm spl}(\Lambda_2^\sharp) \right).
  \end{equation}
\begin{enumerate}
  \item  
   These strata satisfy the following inclusion relations:
  \begin{itemize}
    \item[(i)] For any $\Lambda_1, \Lambda_2 \in L_{\mathcal{Z}}$ of type greater than $2h$, 
    $  \Lambda_1 \subseteq \Lambda_2$ if and only if $\mathcal{Z}^{\rm spl}(\Lambda_2) \subseteq \mathcal{Z}^{\rm spl}(\Lambda_1)$.    
    \item[(ii)] For any $\Lambda_1, \Lambda_2 \in  L_{\mathcal{Y}}$ of type less than $2h$, 
    $  \Lambda_1 \subseteq \Lambda_2$ if and only if $\mathcal{Y}^{\rm spl}(\Lambda^{\sharp}_1) \subseteq \mathcal{Y}^{\rm spl}(\Lambda^{\sharp}_2)$.  

    \item[(iii)] For any $\Lambda_1\in L_{\mathcal{Z}}$ of type greater than $2h$, $\Lambda_2 \in  L_{\mathcal{Y}}$ of type less than $2h$, $  \Lambda_1 \subseteq \Lambda_2$ if and only if the intersection $\mathcal{Z}^{\rm spl}(\Lambda_1) \cap \mathcal{Y}^{\rm spl}(\Lambda_2^\sharp)$ is non-empty.
  \end{itemize}

  \item In the following, assume that $\Lambda, \Lambda'$ are vertex lattices of type $ 2t$ with $t \neq h$, and $\Lambda_0, \Lambda'_0$ are vertex lattices of type $2t$ with $ t=h$. 
  \begin{itemize}
    \item[(i)] The intersection $\mathcal{Z}^{\rm spl}(\Lambda) \cap \mathcal{Z}^{\rm spl}(\Lambda')$ (resp. $\mathcal{Y}^{\rm spl}(\Lambda^\sharp) \cap \mathcal{Y}^{\rm spl}(\Lambda^{\prime \sharp)}$) is non-empty if and only if $\Lambda'' = \Lambda + \Lambda'$ (resp. $\Lambda''=\Lambda \cap \Lambda'$) is a vertex lattice; in which case we have $\mathcal{Z}^{\rm spl}(\Lambda) \cap \mathcal{Z}^{\rm spl}(\Lambda') = \mathcal{Z}^{\rm spl}(\Lambda'')$ (resp. $\mathcal{Y}^{\rm spl}(\Lambda^\sharp) \cap \mathcal{Y}^{\rm spl}(\Lambda^{\prime \sharp}) = \mathcal{Y}^{\rm spl}(\Lambda^{\prime \prime \sharp})$).

    \item[(ii)] The intersection $\mathcal{Z}^{\rm spl}(\Lambda_0) \cap \mathcal{Z}^{\rm spl}(\Lambda_0')$ (or $\mathcal{Y}^{\rm spl}(\Lambda_0^\sharp) \cap \mathcal{Y}^{\rm spl}(\Lambda_0^{\prime \sharp})$) is always empty if $\Lambda_0 \ne \Lambda'_0$.
    
    \item[(iii)] The intersection $\mathcal{Z}^{\rm spl}(\Lambda) \cap \mathcal{Z}^{\rm spl}(\Lambda_0)$ (resp.  $\mathcal{Y}^{\rm spl}(\Lambda^\sharp) \cap \mathcal{Y}^{\rm spl}(\Lambda_0^\sharp)$) is non-empty if and only if $\Lambda \subset \Lambda_0$ (resp. $\Lambda_0 \subset \Lambda$), in which case $\mathcal{Z}^{\rm spl}(\Lambda) \cap \mathcal{Z}^{\rm spl}(\Lambda_0)$ (resp.  $\mathcal{Y}^{\rm spl}(\Lambda^\sharp) \cap \mathcal{Y}^{\rm spl}(\Lambda_0^\sharp)$) is isomorphic to $\mathbb{P}^{h+t-1}_{\bar{k}}$ (resp. $\mathbb{P}^{h-t-1}_{\bar{k}}$).
    \item[(iv)] The BT-strata $\mathcal{Z}^{\mathrm{spl}}(\Lambda_0)$ and $\mathcal{Y}^{\mathrm{spl}}(\Lambda_0^\sharp)$ are each isomorphic to the projective space $\mathbb{P}^{n-1}_{\bar{k}}$.
   \end{itemize}
\end{enumerate}
\end{Theorem}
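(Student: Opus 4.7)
The plan is to reduce every claim to a lattice-theoretic statement about the Dieudonné pair $(M,M')$ via Propositions \ref{DSpl} and \ref{DieudLatt}, and then invoke the maximality/minimality dichotomy of Proposition \ref{prop 51} together with Corollary \ref{t0}. The set-theoretic decomposition (\ref{BTstr}) is immediate: given any $\kappa$-point of $\overline{\calN}_n^\spl$, Proposition \ref{prop 51} produces either a maximal vertex lattice $\Lambda_1\in L_{\calZ}$ witnessing membership in $\calZ^\spl(\Lambda_1)$, or a minimal vertex lattice $\Lambda_2\in L_{\calY}$ witnessing membership in $\calY^\spl(\Lambda_2^\sharp)$; the splitting datum $M'$ automatically fits the relevant stratum condition since it lies between $VM^\sharp$ and $M^\sharp$. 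Reducedness of the right-hand side is Corollary \ref{Reducedness}, upgrading the equality to a scheme-theoretic identity.

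For part (1)(i), the forward direction is visible from Proposition \ref{DieudLatt}: shrinking $\Lambda$ (hence enlarging $\Lambda^\sharp$) only weakens the chain conditions defining $\calZ^\spl(\Lambda)$, so $\Lambda_1\subseteq \Lambda_2$ forces $\calZ^\spl(\Lambda_2)\subseteq \calZ^\spl(\Lambda_1)$. Conversely, picking a $\kappa$-point of $\calZ^\spl(\Lambda_2)$ at which $\Lambda_2$ is realized as the maximal vertex lattice $T_d(M^\sharp)^\sharp\cap C$ of Proposition \ref{prop 51}, membership in $\calZ^\spl(\Lambda_1)$ forces $\Lambda_1\otimes W\subset M$, and hence $\Lambda_1\subseteq \Lambda_2$ by maximality. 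Part (1)(ii) is dual. For (1)(iii), any common point yields $\Lambda_1\otimes W\subset M\subset \Lambda_2\otimes W$ and hence $\Lambda_1\subseteq \Lambda_2$; conversely if $\Lambda_1\subseteq \Lambda_2$ with $t_1>h>t_2$, the intersection of strata splitting models is smooth and non-empty of dimension $t_1-t_2-1\geq 1$ by Theorem \ref{thm splstrata}(3), which transfers via the local model diagram of Section \ref{LPBT} to a non-empty intersection of the BT-strata.

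Part (2)(i) again follows from Proposition \ref{DieudLatt}: a point of $\calZ^\spl(\Lambda)\cap \calZ^\spl(\Lambda')$ satisfies $(\Lambda+\Lambda')\otimes W\subset M\subset M^\sharp\subset (\Lambda+\Lambda')^\sharp\otimes W$, which forces $\Lambda+\Lambda'$ to be a vertex lattice and identifies the intersection with $\calZ^\spl(\Lambda+\Lambda')$ via (1)(i); the $\calY^\spl$ statement is dual, replacing the sum by $\Lambda\cap\Lambda'$. Part (2)(ii) is immediate from Corollary \ref{t0}, which pins $M$ down to $\Lambda_0\otimes W$ at any point of a $t=h$ stratum, and part (2)(iv) is literally Corollary \ref{t0}(2).

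The main obstacle is part (2)(iii). By Proposition \ref{DieudLatt}, a point of $\calZ^\spl(\Lambda)\cap \calZ^\spl(\Lambda_0)$ with $\Lambda$ of type $2t>2h$ and $\Lambda_0$ of type $2h$ has $M=\Lambda_0\otimes W$ and $\Lambda\otimes W\subset M\subset \Lambda^\sharp\otimes W$, forcing $\Lambda\subset \Lambda_0$; the splitting datum is then a colength-one submodule $M'\subset M^\sharp=\Lambda_0^\sharp\otimes W$ containing both $\Lambda\otimes W$ and $VM^\sharp$. Since $M^\sharp/VM^\sharp=\operatorname{Lie} X^\vee$ is $\kappa$-$n$-dimensional, such $M'$ correspond bijectively to hyperplanes containing the image $\overline{\Lambda}$ of $\Lambda\otimes W$. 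A dimension count using $\dim_k(\Lambda_0^\sharp/\Lambda)=h+t$ yields $\dim_\kappa \overline{\Lambda}=n-h-t$, so the set of valid hyperplanes is a linear $\mathbb{P}^{h+t-1}\subset \mathbb{P}^{n-1}$. To upgrade this to a scheme-theoretic isomorphism, I would pass to the affine charts of Proposition \ref{prop X-strata} and explicitly identify the preimage of the worst point inside $\M^{\spl,[2h]}_n(2t)$ by setting the ``deformation'' coordinates to zero; the remaining free coordinates $(V_{1,1},V_{1,2})$ produce the expected projective space. The $\calY^\spl$ case is entirely dual, using Proposition \ref{prop Y-strata} and yielding $\mathbb{P}^{h-t-1}$.
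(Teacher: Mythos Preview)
Your overall strategy matches the paper's: reduce to $\kappa$-points via Propositions \ref{DSpl} and \ref{DieudLatt}, invoke the dichotomy of Proposition \ref{prop 51}, and handle the type-$2h$ strata via Corollary \ref{t0}. Most of the individual arguments are correct and parallel the paper's.

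There is, however, a genuine gap in your argument for the covering (\ref{BTstr}). You assert that ``the splitting datum $M'$ automatically fits the relevant stratum condition since it lies between $VM^\sharp$ and $M^\sharp$.'' This does not suffice. In the $\calZ^\spl$ case with $\Lambda_1$ of type $2t_1>2h$, the stratum condition requires $\breve{\Lambda}_1\subset M'$, and knowing only $VM^\sharp\subset M'\subset M^\sharp$ does not yield this: in fact $VM^\sharp\subset V\breve{\Lambda}_1^\sharp=\Pi\breve{\Lambda}_1^\sharp\subset \breve{\Lambda}_1$, which is the wrong inclusion. The missing step (which the paper supplies) is that when $t_1\neq h$ the lattice $M$ is not $\tau$-invariant, so $M^\sharp\cap \tau^{-1}(M^\sharp)\subsetneq M^\sharp$ has colength one and hence equals $M'$; then $\tau$-invariance of $\breve{\Lambda}_1$ together with $\breve{\Lambda}_1\subset M^\sharp$ gives $\breve{\Lambda}_1\subset \tau^{-1}(M^\sharp)$ and therefore $\breve{\Lambda}_1\subset M'$. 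The $\calY^\spl$ case is handled dually. Without this observation the decomposition (\ref{BTstr}) is not established.

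Two smaller remarks. In (1)(i), your converse argument assumes the existence of a point of $\calZ^\spl(\Lambda_2)$ at which $\Lambda_2$ is realized as the \emph{maximal} vertex lattice of Proposition \ref{prop 51}; this is true but you should say why (or argue more directly as the paper does). In (2)(iii), your framing via hyperplanes in $M^\sharp/VM^\sharp$ is more complicated than necessary: since $VM^\sharp=\Pi\breve{\Lambda}_0^\sharp\subset\breve{\Lambda}$ here, the data of $M'$ is simply a hyperplane in $\breve{\Lambda}_0^\sharp/\breve{\Lambda}$, which is $(t+h)$-dimensional, giving $\mathbb{P}^{t+h-1}$ immediately.
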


\begin{proof}
To prove (\ref{BTstr}), it suffices to check this on $\kappa$-points (see also the proof of \cite[Theorem 3.19]{HLSY}). A point $z \in \mathcal{N}_{n, \mathrm{red}}^\spl(\kappa)$ corresponds to a pair $(M, M')$ as in Proposition \ref{DSpl}. For the remainder of the proof, we fix $\kappa$ and denote by  $\breve{\Lambda}=\Lambda\otimes W_{O_{F_0}}(\kappa)$. By taking $\Lambda_1$ or $\Lambda_2$ from Proposition \ref{prop 51}, we either have $\breve{\Lambda}_1 \subset M$ or $ M\subset \breve{\Lambda}_2$ as a unique vertex lattice of type $\geq 2h$ or $ \leq 2h$ respectively. 

If  $\Lambda_i$ has type $2h$ for $i=1,2$, then $ \breve{\Lambda}_1\subset M \subset M^\sharp \subset \breve{\Lambda}_1^\sharp$ for case $\calZ^\spl$ (resp.  $M\subset \breve{\Lambda}_2 \subset\breve{\Lambda}_2^\sharp\subset M^\sharp$ for case $\calY^\spl$), so they have to be equal. Thus, by Proposition \ref{DieudLatt}, $z \in \mathcal{Z}^{\rm spl}(\Lambda)(\kappa)$ or $z \in \mathcal{Y}^{\rm spl}(\Lambda^{\sharp})(\kappa)$ depending on $2h$.

If \( \Lambda \) is not of type $2h$, then $ M$, and so $M^{\sharp}$, is not $\tau$-invariant. By Proposition \ref{DSpl}, we have $M' \subset \tau^{-1}(M^\sharp) \cap M^\sharp$, $\operatorname{length}(M^\sharp / M') = 1$. Since $\tau^{-1}(M^\sharp)\neq M^\sharp$, we get
$\tau^{-1}(M^\sharp) \cap M^\sharp\subsetneq M^\sharp$ and so $M' = M^{\sharp} \cap \tau^{-1}(M^{\sharp})$ is uniquely determined. Since $\Lambda$ is $\tau$-invariant, we deduce that either $\breve{\Lambda}\subset M^\sharp$, which implies $\breve{\Lambda}=\tau^{-1}(\breve{\Lambda})\subset \tau^{-1}(M^\sharp)$ and so $\breve{\Lambda}\subset M'$ (Case $\calZ^\spl$), or $\breve{\Lambda}^{\sharp} \subset M^\sharp$, which implies $\breve{\Lambda}^{\sharp}=\tau^{-1}(\breve{\Lambda}^\sharp) \subset \tau^{-1}(M^\sharp)$ and so $\breve{\Lambda}^\sharp\subset M'$ (Case $\calY^\spl$). Hence, $z \in \mathcal{Z}^{\rm spl}(\Lambda)(\kappa)$ or $z \in \mathcal{Y}^{\rm spl}(\Lambda^{\sharp})(\kappa)$ by Proposition \ref{DieudLatt}. This proves  (\ref{BTstr}). 

(1). Inclusion properties (i) and (ii) follow from the definitions of the strata by Propositions \ref{DieudLatt} and \ref{prop 51}. For (iii), if the intersection $\mathcal{Z}^{\rm spl}(\Lambda_1) \cap \mathcal{Y}^{\rm spl}(\Lambda_2^\sharp)$ is non-empty and pick $(M, M')\in \mathcal{Z}^{\rm spl}(\Lambda_1) \cap \mathcal{Y}^{\rm spl}(\Lambda_2^\sharp)(\kappa)$, then $\breve{\Lambda}_1\subset M \subset M^\sharp$ and $M\subset \breve{\Lambda}_2\subset \breve{\Lambda}_2^\sharp$, thus $\breve{\Lambda}_1 \subset M\subset \breve{\Lambda}_2$. Conversely, suppose $\Lambda_1\subset \Lambda_2$, then the intersection $\M^{\spl, [2h]}_n (2t_1)\cap \M^{\spl, [2h]}_n (2t_2)$ is non-empty by \S \ref{intersection}, where $t_1$ is the type of $\Lambda_1$ and $t_2$ is the type of $\Lambda_2$. Thus, $\mathcal{Z}^{\rm loc}(\Lambda_1) \cap \mathcal{Y}^{\rm loc}(\Lambda_2^\sharp)$ is non-empty by the local model diagram (\ref{LMdiagram1}).

(2.i). For the $\mathcal{Z}^{\rm spl}$-strata, if we assume that $\Lambda''$ is a vertex lattice, then we can easily see that $\mathcal{Z}^{\rm spl}(\Lambda) \cap \mathcal{Z}^{\rm spl}(\Lambda') = \mathcal{Z}^{\rm spl}(\Lambda'')$ by construction. On the other hand, if we assume that $\mathcal{Z}^{\rm spl}(\Lambda) \cap \mathcal{Z}^{\rm spl}(\Lambda')$ is nonempty and pick $ (M,M') \in  \mathcal{Z}^{\rm spl}(\Lambda) \cap \mathcal{Z}^{\rm spl}(\Lambda')(\kappa)$, then $\Lambda_1 \supset \Lambda + \Lambda'$ where $ \Lambda_1$ is the maximal vertex lattice contained in $M$ from Proposition \ref{prop 51}. Then $\Lambda + \Lambda' \subset \Lambda_1 \subset \Lambda_1^\sharp \subset \Lambda^\sharp \cap (\Lambda')^\sharp = (\Lambda + \Lambda')^\sharp$. Similarly, we have $\pi(\Lambda+\Lambda')^\sharp\subset (\Lambda+\Lambda')$. Hence, $\Lambda''=\Lambda + \Lambda'$ is a vertex lattice. For $\mathcal{Y}^{\rm spl}$-strata, note that $ (M,M') \in  \mathcal{Y}^{\rm spl}(\Lambda^{\sharp}) \cap \mathcal{Y}^{\rm spl}((\Lambda')^{\sharp})(\kappa)$ gives  $\Lambda_2\subset \Lambda\cap \Lambda'\subset (\Lambda^\sharp+\Lambda')^\sharp\subset \Lambda_2^\sharp$ by the minimality of $\Lambda_2$ from Proposition \ref{prop 51}. Then, arguing as in the case of the $\mathcal{Z}^{\rm spl}$-strata, we obtain the desired result.

(2.ii). The statement follows from Proposition \ref{DieudLatt} (see also Corollary \ref{t0}).

(2.iii). For $\mathcal{Z}^{\rm spl}$-strata, a point $(M, M') \in  \mathcal{N}^{\rm spl}_{n}(\kappa) $ is in $\mathcal{Z}^{\rm spl}(\Lambda) \cap \mathcal{Z}^{\rm spl}(\Lambda_0)$ if and only if $M = \breve{\Lambda}_0$ and $\breve{\Lambda} \subset M \subset M^\sharp\subset \breve{\Lambda}^\sharp$, $\breve{\Lambda} \subset M' \subset M^\sharp$ by Propositions \ref{DieudLatt} and \ref{prop 51}. This shows that $\Lambda \subset \Lambda_0$, and $M'$ corresponds to a point in $\mathbb{P}(\Lambda_0^\sharp / \Lambda)(\kappa)$. Note that $\Lambda_0$ (resp. $\Lambda$) is a vertex lattice of type $2h$ (resp. $2t$). So $\dim(\Lambda_0^\sharp / \Lambda)=t+h$. Thus, we can see that $ \mathcal{Z}^{\rm spl}(\Lambda) \cap \mathcal{Z}^{\rm spl}(\Lambda_0)(\kappa) =\mathbb{P}^{h+t-1}_{\kappa}$. 
Similarly, for $\mathcal{Y}^{\rm spl}$-strata, we have $M = \breve{\Lambda}_0 $ and $M\subset \breve{\Lambda} \subset \breve{\Lambda}^\sharp\subset M^\sharp$, $\breve{\Lambda}^\sharp \subset M' \subset M^\sharp$. Thus, $\Lambda_0\subset \Lambda$ and 
$M'$ corresponds to a point in $\mathbb{P}(\Lambda_0^\sharp / \Lambda^\sharp)(\kappa)\simeq \mathbb{P}^{h-t-1}(\kappa)$.

(2.iv). This claim follows from Corollary \ref{t0}.
\end{proof}
\quash{
\begin{Proposition}
Let $L$ be a lattice of $\mathbb{V}$ and denote by $\mathcal{Z}'(L)_{\mathrm{red}}$ and $\mathcal{Y}' (L^{\sharp})_{\mathrm{red}}$ the reduced subschemes of $\mathcal{Z}'(L)$ and $\mathcal{Y}'(L^{\sharp})$ respectively. Then 
\begin{equation}\label{stratacycles}
     \mathcal{Z}'(L)_{\mathrm{red}}\;=\; \bigcup_{L \subset \Lambda}  \mathcal{Z}^{\rm spl}(\Lambda), \quad  \mathcal{Y}'(L^{\sharp})_{\mathrm{red}}\;=\; \bigcup_{L^{\sharp} \subset \Lambda^{\sharp}}  \mathcal{Y}^{\rm spl}(\Lambda^{\sharp}).
\end{equation}
Moreover, $\mathcal{Z}'(L) \cap \mathcal{Z}^{\rm spl}(\Lambda)$  is nonempty if and only if $L \subset \Lambda^\sharp$ and $\mathcal{Y}'(L^{\sharp}) \cap \mathcal{Y}^{\rm spl}(\Lambda^{\sharp})$ is nonempty if and only if $L^{\sharp} \subset \pi^{-1} \Lambda$.
\end{Proposition}
\begin{proof}
It suffices to check the equalities in (\ref{stratacycles}) on $\kappa$-points. We prove the statement for $\mathcal{Z}'(L)$; the case of $\mathcal{Y}'(L^{\sharp})$ is similar. If $t(L)=2h$, then by definition $\mathcal{Z}'(L)_{\mathrm{red}}=\mathcal{Z}^{\mathrm{spl}}(L)$, so assume $t(L)\neq 2h$. Let $(M,M')\in \mathcal{Z}'(L)_{\mathrm{red}}(\kappa)$, so $L\subset M$. As in the proof of Theorem \ref{Thm BT}, the condition $t(L)\neq 2h$ implies that $M^{\sharp}$ is not $\tau$-invariant; hence $M' = M^{\sharp}\cap \tau^{-1}\!\bigl(M^{\sharp}\bigr).$ By Proposition \ref{prop 51}, there exists a maximal vertex lattice $\Lambda_{1}$ with $L\subset \Lambda_{1}$ and $(M,M')\in \mathcal{Z}^{\mathrm{spl}}(\Lambda_{1})$. Therefore
\[
\mathcal{Z}'(L)_{\mathrm{red}} \;\subset\; \bigcup_{L\subset \Lambda}\mathcal{Z}^{\mathrm{spl}}(\Lambda).
\]
Conversely, let $(M,M')\in \mathcal{Z}^{\mathrm{spl}}(\Lambda)$ with $L\subset \Lambda$. Then $\Lambda\subset M$ and $\Lambda\subset M'$, so $L\subset M$ and hence $(M,M')\in \mathcal{Z}'(L)$. Combining the two inclusions gives
\[
\mathcal{Z}'(L)_{\mathrm{red}} \;=\; \bigcup_{L\subset \Lambda}\mathcal{Z}^{\mathrm{spl}}(\Lambda).
\]

We now show that $\mathcal{Z}'(L)\cap \mathcal{Z}^{\mathrm{spl}}(\Lambda)\neq \varnothing$ if and only if $L\subset \Lambda^{\sharp}$. Assume $L\subset \Lambda^{\sharp}$. Set $\Lambda' := L+\Lambda \supset \Lambda$. Then $\Lambda'$ is a vertex lattice, and by Theorem~\ref{Thm BT} together with the definition of $\mathcal{Z}'(L)$,
\[
\mathcal{Z}^{\mathrm{spl}}(\Lambda') \;\subset\; \mathcal{Z}'(L)\cap \mathcal{Z}^{\mathrm{spl}}(\Lambda),
\]
so the intersection is non-empty. Conversely, assume $\mathcal{Z}'(L)\cap \mathcal{Z}^{\mathrm{spl}}(\Lambda)\neq \varnothing$. By~\eqref{stratacycles} and Theorem \ref{Thm BT}, there exists a vertex lattice $\Lambda'$ with $\Lambda\subset \Lambda'$ and $L\subset \Lambda'$. Since $\Lambda' \subset (\Lambda')^{\sharp} \subset \Lambda^{\sharp},$ we conclude $L\subset \Lambda^{\sharp}$.

It remains to prove that $\mathcal{{Y}}'(L^{\sharp})\cap \mathcal{Y}^{\mathrm{spl}}(\Lambda^{\sharp})\neq \varnothing$ if and only if $L^{\sharp}\subset \pi^{-1}\Lambda$. Suppose $\mathcal{Y}'(L^{\sharp})\cap \mathcal{Y}^{\mathrm{spl}}(\Lambda^{\sharp})\neq \varnothing$. As above, there exists a vertex lattice $\Lambda'$ with $\Lambda^{\sharp}\subset (\Lambda')^{\sharp}$ and $L^{\sharp}\subset (\Lambda')^{\sharp}$. Since $\Lambda^{\sharp} \subset (\Lambda')^{\sharp} \subset \pi^{-1}\Lambda,$ we obtain $L^{\sharp}\subset \pi^{-1}\Lambda$. Conversely, assume $L^{\sharp}\subset \pi^{-1}\Lambda$ and set $\Lambda' := L\cap \Lambda$. Using $(L\cap \Lambda)^{\sharp}=L^{\sharp}+\Lambda^{\sharp}$, we have $(\Lambda')^{\sharp} = L^{\sharp}+\Lambda^{\sharp} \subset \pi^{-1}\Lambda.$ Thus $\Lambda'$ is a vertex lattice with $\Lambda^{\sharp}\subset (\Lambda')^{\sharp}$, and by Theorem~\ref{Thm BT}, $\mathcal{Y}^{\mathrm{spl}}\bigl((\Lambda')^{\sharp}\bigr) \subset \mathcal{Y}^{\mathrm{spl}}(\Lambda^{\sharp}).$ By definition, $\mathcal{Y}^{\mathrm{spl}}\bigl((\Lambda')^{\sharp}\bigr)\subset \mathcal{Y}'(L^{\sharp})$ as well, so the intersection is non-empty. This completes the proof. 
\end{proof}}

\section{Global properties of Bruhat-Tits strata}\label{GlobalPropertiesSection}
The goal of this section is to prove that the BT-strata in the RZ space $\mathcal{N}^{\rm spl}_n$ are connected and irreducible. To accomplish that, we will identify the BT-strata with certain (modified) Deligne-Lusztig varieties.

\subsection{Deligne-Lusztig varieties}\label{DLVs}
In this section, we consider a class of (modified) Deligne–Lusztig varieties arising from symplectic and orthogonal groups. 

\subsubsection{Symplectic Case}\label{symsec} Assume that the lattice $\Lambda \subset C$ is of type $2t$ with $ t>h$ and consider the $k$-vector space $V_{\Lambda} = \Lambda^{\sharp}/\Lambda$ of dimension $2t$ with induced symplectic form $ \langle \,, \, \rangle  $. Define $  V_{\Lambda,\bar{k}} := V_{\Lambda} \otimes_k \bar{k} $, denote by $\Phi$ its Frobenius endomorphism and denote the bilinear extension of $ \langle \,, \, \rangle$ to $ V_{\Lambda,\bar{k}}$ still by $ \langle \,, \, \rangle$. Let $G$ be the special symplectic group ${\rm Sp}(V_{\Lambda,\bar{k}})$. We fix a maximal torus and Borel subgroup $T\subset B\subset G$ which is stable under the $\Phi$-action. Let \( \mathrm{Gr}(i, V_{\Lambda,\bar{k}}) \) be the Grassmannian variety parametrizing rank \( i \) locally direct summands of $ V_{\Lambda,\bar{k}}$. Consider the parabolic subgroup $P\subset G$, where $G/P$ parametrizes isotropic subspaces in $V_{\Lambda,\bar{k}}$ of dimension $t-h$. Denote by $\mathrm{SGr}(i, V_{\Lambda,\bar{k}})=G/P$. The $\bar{k}$-points are 
$ \mathrm{SGr}(i, V_{\Lambda,\bar{k}})(\bar{k})=\left\{ U \in \mathrm{Gr}(i, V_{\Lambda,\bar{k}})(\bar{k}) \ \middle| \  \langle U , U \rangle =0 \right\}$. 

Consider the subvariety $S_{\Lambda}$ of $ \mathrm{SGr}(t-h, V_{\Lambda,\bar{k}})$ (see \cite[\S 6.2]{HLS2}) given by 
\[
S_{\Lambda}(\bar{k}) = \left\{ U \in \mathrm{SGr}(t-h, V_{\Lambda,\bar{k}})(\bar{k}) \ \middle| \  \operatorname{dim} (U \cap \Phi(U)) \geq t-h-1 \right\}
\]
which is stratified by certain (generalized) Deligne-Lusztig varieties. 
We refer to \cite[\S 6.1]{HLS2} for more details. The variety $S_\Lambda$ is irreducible and of dimension $t+h$ (see \cite[Theorem 6.3]{HLS2}). 
Next, let $S'_{\Lambda}$ be the reduced closed subscheme of $\mathrm{SGr}(t-h, V_{\Lambda,\bar{k}}) \times \mathrm{Gr}(t+h-1, V_{\Lambda,\bar{k}})$ whose $\bar{k}$-points are specified by
\[
S'_{\Lambda}(\bar{k}) = \left\{ (U, U') \in \left(\mathrm{SGr}(t-h, V_{\Lambda,\bar{k}}) \times \mathrm{Gr}(t+h-1, V_{\Lambda,\bar{k}})\right)(\bar{k}) \;\middle|\; U' \subset U^{\sharp} \cap \Phi(U^{\sharp}) \right\}.
\]
(Here $U^{\sharp}$ is the dual of $U$ with respect to the symplectic form $ \langle \,, \, \rangle  $ of $V_{\Lambda} $). Then the variety $S'_{\Lambda}$ is a projective subvariety of $\mathrm{SGr}(t-h, V_{\Lambda,\bar{k}}) \times \mathrm{Gr}(t+h-1, V_{\Lambda,\bar{k}})$. Note that by \cite[Lemma 4.4]{HZ}, we have $[U^\sharp : U^\sharp \cap \Phi(U^\sharp)] = [U : U \cap \Phi(U)]$, and thus the conditions $U' \subset U^\sharp \cap \Phi(U^\sharp)$ and $U' \mathrel{\overset{\scriptstyle \leq 1}{\subset}} U^\sharp$ imply that $[U : U \cap \Phi(U)] \leq 1$. Hence, there is a forgetful surjective map $\varphi_{\Lambda}: S'_{\Lambda} \rightarrow S_{\Lambda}  $ given by $ (U, U') \mapsto U$.

\begin{Lemma}\label{lm 61}
The morphism $\varphi_{\Lambda}$ is a projective morphism. It is an isomorphism outside the closed subscheme $ T = \{U\in S_{\Lambda} \, | \, U = \Phi(U) \}$ of $S_{\Lambda}$. For a point $y\in T$ we have $\varphi_{\Lambda}^{-1}(y) \cong \mathbb{P}^{t+h-1}_{\bar{k}}$.
\end{Lemma}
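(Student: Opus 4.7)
The plan is to decompose $S_\Lambda = T \sqcup V$, where $V := S_\Lambda \setminus T$, and analyze $\varphi_\Lambda$ separately on each piece. Projectivity of $\varphi_\Lambda$ is immediate: by construction $S'_\Lambda$ is a closed subscheme of the projective $\bar k$-scheme $\mathrm{SGr}(t-h,V_{\Lambda,\bar k})\times\mathrm{Gr}(t+h-1,V_{\Lambda,\bar k})$, and $\varphi_\Lambda$ is the restriction of the first projection. The locus $T$ is closed in $S_\Lambda$, being the preimage of the diagonal under the algebraic morphism $U\mapsto (U,\Phi(U))$ into $\mathrm{SGr}(t-h,V_{\Lambda,\bar k})\times \mathrm{SGr}(t-h,V_{\Lambda,\bar k})$.

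For the fiber over a point $y=[U]\in T$, the relation $U=\Phi(U)$ yields $U^\sharp=\Phi(U^\sharp)$, so $U^\sharp\cap\Phi(U^\sharp)=U^\sharp$, which has dimension $2t-(t-h)=t+h$. Hence $\varphi_\Lambda^{-1}(y)$ parametrizes $(t+h-1)$-dimensional subspaces of the $(t+h)$-dimensional space $U^\sharp$, giving the canonical identification $\varphi_\Lambda^{-1}(y)\cong \mathrm{Gr}(t+h-1,U^\sharp)\cong \mathbb{P}^{t+h-1}_{\bar k}$.

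For the isomorphism on $V$, note that on $V$ we have $U\neq\Phi(U)$, which together with the defining inequality $\dim(U\cap\Phi(U))\geq t-h-1$ of $S_\Lambda$ forces $[U:U\cap\Phi(U)]=1$ pointwise. By the symplectic identity $[U^\sharp:U^\sharp\cap\Phi(U^\sharp)]=[U:U\cap\Phi(U)]$ recalled from \cite[Lemma 4.4]{HZ}, this yields $\dim(U^\sharp\cap\Phi(U^\sharp))=t+h-1$ uniformly on $V$. By upper-semicontinuity of the rank of an intersection of vector subbundles, constancy of this rank implies that $U^\sharp\cap\Phi(U^\sharp)$ is a locally direct summand of rank $t+h-1$ on $V$, and so defines an algebraic section $\sigma\colon V\to S'_\Lambda$ of $\varphi_\Lambda$ sending $U$ to $(U,U^\sharp\cap\Phi(U^\sharp))$. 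Any point of $\varphi_\Lambda^{-1}(V)$ is a pair $(U,U')$ with $U'\subset U^\sharp\cap\Phi(U^\sharp)$ and $\dim U'=t+h-1=\dim(U^\sharp\cap\Phi(U^\sharp))$, forcing $U'=U^\sharp\cap\Phi(U^\sharp)$; hence the fibers of $\varphi_\Lambda$ on $V$ are singletons. Reducedness of $S'_\Lambda$ then guarantees that $\sigma$ is a scheme-theoretic inverse to $\varphi_\Lambda|_{\varphi_\Lambda^{-1}(V)}$, which is therefore an isomorphism.

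The main technical point is the constancy of the rank of $U^\sharp\cap\Phi(U^\sharp)$ on $V$, which is what promotes the set-theoretic inverse to a morphism of schemes; this is precisely what the symplectic duality identity provides. Once that is in place, the remaining verifications (projectivity, identification of the fibers over $T$, and the reducedness-based cancellation producing the inverse) are routine.
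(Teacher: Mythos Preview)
Your proof is correct and follows essentially the same approach as the paper's: both compute the fibers over $T$ and over its complement, identifying the former with $\mathrm{Gr}(t+h-1,U^\sharp)\cong\mathbb{P}^{t+h-1}_{\bar k}$ and showing the latter are singletons via the identity $[U^\sharp:U^\sharp\cap\Phi(U^\sharp)]=[U:U\cap\Phi(U)]$. You supply more scheme-theoretic care than the paper does---constructing the explicit section $\sigma$ via constancy of rank and invoking reducedness of $S'_\Lambda$ to upgrade the pointwise inverse to a scheme-theoretic one---whereas the paper is content with the fiberwise statement.
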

\begin{proof}
First, we know that $\varphi_{\Lambda}$ is projective as it is a morphism between projective schemes. The subscheme $T$ is closed by \cite[\S 6.2]{HLS2}. Consider a $\bar{k}$-point $U \in S_{\Lambda}(\overline{\kappa})/ T$, then $U \cap \Phi(U)$ has dimension $t+h-1$. The fiber of $U$ under $\varphi_{\Lambda}$ contains pairs $(U,U')$ such that $ U' \subset U^{\sharp} \cap \Phi(U^{\sharp}) $ and $U'$ has dimension $t+h-1$. Hence, $U'$ is uniquely determined and equals $ U^{\sharp} \cap \Phi(U^{\sharp}) $. Now, assume that $ U \in T$. Then $U = \Phi(U)$ and $U'$ can be any element in $\mathrm{Gr}(t+h-1, U^{\sharp}) \cong \mathbb{P}^{t+h-1}_{\bar{k}}$. This finishes the proof of the lemma.  
\end{proof}
From the above, we can deduce that

\begin{Corollary}\label{Cor 62}
The projective scheme $ S'_{\Lambda}$ has dimension $t+h$.    
\end{Corollary}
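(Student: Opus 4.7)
The plan is to exploit the forgetful morphism $\varphi_{\Lambda} : S'_{\Lambda} \to S_{\Lambda}$ from Lemma \ref{lm 61} and compute the dimension via a fiber-dimension argument, using the already-established dimension of $S_{\Lambda}$.

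First, I would recall that by \cite[Theorem 6.3]{HLS2}, the scheme $S_{\Lambda}$ is irreducible of dimension $t+h$. Next, I would observe that the closed subscheme $T = \{U \in S_{\Lambda} \mid U = \Phi(U)\}$ is finite: its $\bar{k}$-points are precisely the $\Phi$-fixed points in the projective variety $\mathrm{SGr}(t-h, V_{\Lambda,\bar{k}})$, which correspond to the $k$-rational points of the symplectic Grassmannian lying in $S_{\Lambda}$, and this is a finite set. In particular, $T$ is a $0$-dimensional closed subscheme, and $S_{\Lambda}\setminus T$ is a dense open of dimension $t+h$.

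By Lemma \ref{lm 61}, $\varphi_{\Lambda}$ restricts to an isomorphism
\[
\varphi_{\Lambda} : S'_{\Lambda} \setminus \varphi_{\Lambda}^{-1}(T) \;\stackrel{\sim}{\longrightarrow}\; S_{\Lambda}\setminus T,
\]
so the open subset $S'_{\Lambda}\setminus \varphi_{\Lambda}^{-1}(T)$ has dimension $t+h$. On the other hand, again by Lemma \ref{lm 61}, the preimage $\varphi_{\Lambda}^{-1}(T)$ is a disjoint union of finitely many fibers, each isomorphic to $\mathbb{P}^{t+h-1}_{\bar{k}}$; hence $\dim \varphi_{\Lambda}^{-1}(T) = t+h-1$. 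Combining these,
\[
\dim S'_{\Lambda} \;=\; \max\bigl(t+h,\; t+h-1\bigr) \;=\; t+h,
\]
which gives the claim.

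The argument is short and the only mild obstacle is justifying that $T$ is finite (equivalently $0$-dimensional), but this is immediate once we identify $\Phi$-fixed points of the symplectic Grassmannian with its $k$-rational points. No other subtlety arises, since the fiber dimension on $T$ is strictly smaller than $\dim S_{\Lambda}$, so the contribution from the exceptional locus cannot increase the total dimension.
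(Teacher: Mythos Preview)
Your proof is correct and follows essentially the same approach as the paper: both use the forgetful morphism $\varphi_{\Lambda}$ from Lemma~\ref{lm 61}, the fact that $T$ is zero-dimensional, and a comparison of the dimension of the open locus $S'_{\Lambda}\setminus\varphi_{\Lambda}^{-1}(T)\cong S_{\Lambda}\setminus T$ (which is $t+h$) with that of the exceptional fibers $\mathbb{P}^{t+h-1}_{\bar{k}}$. The paper phrases the conclusion in terms of irreducible components of $S'_{\Lambda}$ rather than a direct $\max$ of dimensions, but the content is the same.
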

\begin{proof}
Set $ \calU_1 = S_{\Lambda}\setminus T$ and $\calU_2:=\varphi_{\Lambda}^{-1}(\calU_1)\simeq \calU_1$. Let $X_0$ be the unique irreducible component of $S'_{\Lambda}$ that contains $\calU_2$. The open subscheme $\calU_1$ is dense in the irreducible variety $S_{\Lambda}$. Thus, $\dim X_0=t+h$. Now, assume $X_1\neq X_0$ is another irreducible component of $S'_{\Lambda}$. Using Lemma \ref{lm 61} and the fact that $T$ is zero dimensional (see \cite[\S 6.2]{HLS2}) we have that $\varphi_{\Lambda}(X_1)=t\in T$ where $t$ is a closed point of $T$ and $\displaystyle \dim X_1\le\dim\varphi_{\Lambda}^{-1}(t)=t+h-1.$ Therefore, we conclude that  $\dim S'_{\Lambda}=t+h$.
\end{proof}
As will be shown in Proposition \ref{S'irreducibility}, $ S'_{\Lambda}$ is also irreducible.
\subsubsection{Orthogonal Case} 
Assume that the lattice $\Lambda \subset C$ is of type $t <h$ and consider the $k$-vector space $V_{\Lambda^{\sharp}} = (\pi^{-1}\Lambda) /\Lambda^{\sharp}$ with induced orthogonal form $ ( \,, \, )  $. Define $ V_{\Lambda^{\sharp},\bar{k}} := V_{\Lambda^{\sharp}} \otimes_k \bar{k} $, 
denote the bilinear extension of $ ( \,, \, )$ to $ V_{\Lambda^{\sharp},\bar{k}}$ still by $( \,, \, )$. Let $G$ be the special orthogonal group ${\rm SO}(V_{\Lambda^{\sharp},\bar{k}})$. Let \( \mathrm{Gr}(i, V_{\Lambda^{\sharp},\bar{k}}) \) be the Grassmannian variety 
and let $ \mathrm{OGr}(i, V_{\Lambda^{\sharp},\bar{k}})$ be the subvariety of $ \mathrm{Gr}(i, V_{\Lambda^{\sharp},\bar{k}})$ given by $\mathrm{OGr}(i, V_{\Lambda^{\sharp},\bar{k}}) = \left\{ U \in \mathrm{Gr}(i, V_{\Lambda^{\sharp},\bar{k}}) \ \middle| \  ( U , U ) =0 \right\}$. Similar to \S \ref{symsec}, we consider the reduced closed subvariety $R_{\Lambda^{\sharp}}$ of $ \mathrm{OGr}(h-t, V_{\Lambda^{\sharp},\bar{k}})$, where the $\bar{k}$-points are
\[
R_{\Lambda^{\sharp}}(\bar{k}) = \left\{ U \in \mathrm{OGr}(h-t, V_{\Lambda^{\sharp},\bar{k}})(\bar{k}) \ \middle| \  \operatorname{dim} (U \cap \Phi(U)) \geq h-t-1 \right\},
\]
(see \cite[\S 6.3]{HLS2}). The variety $R_{\Lambda^{\sharp}}$ is irreducible of dimension $n-t-h-1$ and admits a stratification by (generalized) Deligne-Lusztig varieties (see \cite[Theorem 6.10]{HLS2}. 
Next, define $R'_{\Lambda^{\sharp}}$ to be the subvariety of $\mathrm{OGr}(h-t, V_{\Lambda^{\sharp},\bar{k}}) \times \mathrm{OGr}(h-t-1, V_{\Lambda^{\sharp},\bar{k}})$ whose $\bar{k}$-points are specified by
\[
R'_{\Lambda^{\sharp}}(\bar{k}) = \left\{ (U, U') \in \left(\mathrm{OGr}(h-t, V_{\Lambda^{\sharp},\bar{k}}) \times \mathrm{OGr}(h-t-1, V_{\Lambda^{\sharp},\bar{k}})\right)(\bar{k}) \;\middle|\; U' \subset U \cap \Phi(U) \right\}.
\]
The variety $R'_{\Lambda^{\sharp}}$ is a projective subvariety and 
we have the forgetful map $\varphi_{\Lambda^{\sharp}}: R'_{\Lambda^{\sharp}}\rightarrow R_{\Lambda^{\sharp}}  $ given by $ (U, U') \mapsto U$. 
\begin{Proposition}\label{Rsmooth}
The projective variety $R'_{\Lambda^{\sharp}}$ is irreducible and smooth of dimension $ n-t-h-1$.    
\end{Proposition}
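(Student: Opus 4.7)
The plan is to adapt the strategy used for the symplectic variety $S'_{\Lambda}$, namely analyze the forgetful projection $\varphi_{\Lambda^\sharp}:R'_{\Lambda^\sharp}\to R_{\Lambda^\sharp}$, and exploit the known irreducibility and dimension of $R_{\Lambda^\sharp}$. A new ingredient compared with the symplectic case is the assertion of smoothness, which demands an extra argument.

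First I would prove the orthogonal analogue of Lemma \ref{lm 61}. For $U \in R_{\Lambda^\sharp}\setminus T$, where $T=\{U\in R_{\Lambda^\sharp}:U=\Phi(U)\}$, one has $\dim(U\cap \Phi(U))= h-t-1$, so $U'$ is forced to equal $U\cap \Phi(U)$ and the fiber of $\varphi_{\Lambda^\sharp}$ is a single reduced point; for $U \in T$, any hyperplane $U'\subset U$ is admissible, giving a fiber $\mathbb{P}^{h-t-1}_{\bar{k}}$. Since $T$ is zero-dimensional (by the analogue of the argument in \S\ref{symsec}) and $R_{\Lambda^\sharp}$ is irreducible of dimension $n-t-h-1$, the dimension-counting argument of Corollary \ref{Cor 62}, together with the inequality $n>2h$ (which holds once the $\pi$-modular case is excluded), yields $\dim R'_{\Lambda^\sharp}=n-t-h-1$. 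Irreducibility then follows because $\varphi_{\Lambda^\sharp}^{-1}(R_{\Lambda^\sharp}\setminus T)$ is isomorphic to the irreducible dense open $R_{\Lambda^\sharp}\setminus T$, and each exceptional fiber $\mathbb{P}^{h-t-1}_{\bar{k}}$ lies in the closure of this open locus (since $T\subset \overline{R_{\Lambda^\sharp}\setminus T}$ by the dimension inequality).

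The main obstacle is smoothness. I would approach it by identifying $R'_{\Lambda^\sharp}$ with a (modified) Deligne--Lusztig variety in the two-step isotropic flag variety of $\mathrm{SO}(V_{\Lambda^\sharp,\bar{k}})$, parameterizing flags $(U',U)$ of consecutive isotropic dimensions $h-t-1<h-t$ whose relative position to the Frobenius twist $(\Phi(U'),\Phi(U))$ is controlled by the conditions $U'\subset U\cap \Phi(U)$. If this matches a generalized Deligne--Lusztig variety attached to a sufficiently small (Coxeter-type) Weyl element, smoothness and the expected dimension are automatic from standard DL theory. A more hands-on alternative is to embed $R'_{\Lambda^\sharp}$ inside the smooth isotropic flag variety $I=\{(U',U):U'\subset U\text{ isotropic, of dimensions }h-t-1<h-t\}$, cut it out by the vanishing of the composite $U'\hookrightarrow V_{\Lambda^\sharp,\bar{k}}\twoheadrightarrow V_{\Lambda^\sharp,\bar{k}}/\Phi(U)$, and verify transversality by a tangent-space computation at the finitely many special points in $\varphi_{\Lambda^\sharp}^{-1}(T)$. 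In either approach, the delicate point is the behavior of $\varphi_{\Lambda^\sharp}$ above $T$, where the forgetful map jumps from an isomorphism to a $\mathbb{P}^{h-t-1}$, and where one must confirm that the tangent space of $R'_{\Lambda^\sharp}$ does not exceed the expected dimension $n-t-h-1$.
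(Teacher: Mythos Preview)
Your overall strategy matches the paper's: analyze the forgetful map $\varphi_{\Lambda^\sharp}$ to get dimension and irreducibility from the known properties of $R_{\Lambda^\sharp}$, and use a transversality argument for smoothness. Your option (b) is essentially the paper's route. However, two points need tightening.

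For smoothness, the paper does not check tangent spaces only at the special points over $T$; it proves transversality uniformly. Concretely, it realizes $R'_{\Lambda^\sharp}$ as the intersection of $\mathrm{OGr}(i,i-1)^2$ and the graph $\{(U,U',\Phi(U),U')\}$ inside $(\mathrm{OGr}(i)\times\mathrm{OGr}(i-1))^2$, with $i=h-t$. The key input, which you do not make explicit, is that the Frobenius $\Phi$ induces the zero map on tangent spaces; this is what forces the two tangent spaces to span the ambient at \emph{every} point, not just over $T$. Your phrasing ``verify transversality \ldots\ at the finitely many special points'' suggests you expect smoothness over $R_{\Lambda^\sharp}\setminus T$ to be automatic from the fiberwise isomorphism, but that only gives smoothness of the restriction to the open preimage, not smoothness of $R'_{\Lambda^\sharp}$ at boundary points of that open set. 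The uniform transversality argument handles all points at once and, crucially, also yields equidimensionality.

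Your irreducibility argument has a genuine gap: from $T\subset\overline{R_{\Lambda^\sharp}\setminus T}$ downstairs one cannot conclude that each exceptional fiber $\mathbb{P}^{h-t-1}$ lies in the closure of $\varphi_{\Lambda^\sharp}^{-1}(R_{\Lambda^\sharp}\setminus T)$ upstairs; this is precisely the subtlety that in the symplectic case required the separate Proposition \ref{S'irreducibility}. The paper's fix is to use the transversality above: it forces $R'_{\Lambda^\sharp}$ to be equidimensional of dimension $n-t-h-1$, while any component disjoint from the open locus would sit inside $\varphi_{\Lambda^\sharp}^{-1}(T)$ and hence have dimension at most $h-t-1<n-t-h-1$ (using $n>2h$). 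So equidimensionality, not a closure argument, is what rules out extra components.
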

\begin{proof}
Set $i=h-t$. Define $\mathrm{OGr}(i, i-1)$ to be the subvariety of $\mathrm{OGr}(i, V_{\Lambda^{\sharp},\bar{k}}) \times \mathrm{OGr}(i-1, V_{\Lambda^{\sharp},\bar{k}})$ whose $\bar{k}$ points are specified by
\[
\mathrm{OGr}(i, i-1)(\bar{k})=\{(U, U') \in \left(\mathrm{OGr}(i, V_{\Lambda^{\sharp},\bar{k}}) \times \mathrm{OGr}(i-1, V_{\Lambda^{\sharp},\bar{k}})\right)(\bar{k})\mid U'\subset U\}.
\]
Consider the following closed immersions:
\[
f_1: \mathrm{OGr}(i, i-1)^2\rightarrow (\mathrm{OGr}(i, V_{\Lambda^{\sharp},\bar{k}}) \times \mathrm{OGr}(i-1, V_{\Lambda^{\sharp},\bar{k}}))^2
\]
given by $(U_1,U_1',U_2, U_2')\mapsto (U_1,U_1',U_2, U_2')$ and
\[
f_2: \mathrm{OGr}(i, V_{\Lambda^{\sharp},\bar{k}}) \times \mathrm{OGr}(i-1, V_{\Lambda^{\sharp},\bar{k}})\rightarrow(\mathrm{OGr}(i, V_{\Lambda^{\sharp},\bar{k}}) \times \mathrm{OGr}(i-1, V_{\Lambda^{\sharp},\bar{k}}))^2
\]
given by $(U,U')\mapsto (U,U',\Phi(U),U')$. By construction, $R'_{\Lambda^{\sharp}}=\mathrm{Im}(f_1)\cap \mathrm{Im}(f_2)$. Since $\mathrm{OGr}(i, i-1)$ and $\mathrm{OGr}(i, V_{\Lambda^{\sharp},\bar{k}}) \times \mathrm{OGr}(i-1, V_{\Lambda^{\sharp},\bar{k}})$ are homogeneous varieties, they are smooth. The Frobenius $\Phi$ induces the zero map on the tangent space and as in the proof of \cite[Proposition 3.2]{HLSY} we deduce that the intersection is transversal. Hence, $R'_{\Lambda^{\sharp}}$ is smooth. Similar to Lemma \ref{lm 61}, the morphism $\varphi_{\Lambda^\sharp}$ is an isomorphism outside the closed subvariety $T'= \{U\in R_{\Lambda^\sharp} \mid U = \Phi(U) \}$. Since $R_{\Lambda^{\sharp}}$ is irreducible of dimension $n-h-t-1$, we get that $R_{\Lambda^\sharp}'$ is irreducible with the same dimension. This finishes the proof of the proposition.
\end{proof}

\subsection{Relation of Deligne-Lusztig varieties with Bruhat-Tits strata}
\subsubsection{$\calZ^\spl$-strata}
Let $\Lambda \subset C$ be a vertex lattice of type $2t$ with $t> h$. The goal is to construct an isomorphism of the BT-stratum $\mathcal{Z}^{\rm spl}(\Lambda)$ and the modified Deligne-Lusztig variety $ S'_{\Lambda}$ defined in \ref{DLVs}. 

For any $\bar{k}$-algebra $R$ and an $R$-point $(X, \iota, \lambda, \rho, \mathcal{F}) \in \mathcal{Z}^{\rm spl}(\Lambda)(R)$, we have the following chains of isogenies
\[
\rho_{\Lambda,\Lambda^\sharp} : X_{\Lambda, R}
\xrightarrow{\rho_{\Lambda, X}} X
\xrightarrow{\lambda} X^\vee
\xrightarrow{\rho_{X^\vee, \Lambda^\sharp}} X_{\Lambda^\sharp, R}.
\]
Applying de Rham realization, we obtain the sequence of
$R$-modules:
\begin{equation}
D(X_{\Lambda, R}) \xrightarrow{D(\rho_{\Lambda, X})}
D(X) \xrightarrow{D(\lambda)} 
D(X^{\vee})\xrightarrow{D(\rho_{ X^\vee, \Lambda^\sharp})} 
D(X_{\Lambda^{\sharp}, R}).
\end{equation}
\quash{
\[
\begin{tikzcd}
\Lambda_{-t,R}\arrow{r} \arrow[d,"\sim"]
&
\Lambda_{-h,R} \arrow{r} \arrow[d,"\sim"]
&
\Lambda_{h,R} \arrow{r} \arrow[d,"\sim"]
&
\Lambda_{t,R} \arrow[d,"\sim"]
\\
D(X_{\Lambda, R}) \arrow[r, "D(\rho_{\Lambda, X})"]
&
D(X) \arrow{r} \arrow[r, "D(\lambda)"] 
&
D(X^{\vee})\arrow[r, "D(\rho_{ X^\vee, \Lambda^\sharp})"]
&
D(X_{\Lambda^{\sharp}, R})
\end{tikzcd}.
\]
}
Set $D(\rho_{\Lambda, \Lambda^\sharp})=D(\rho_{ X^\vee, \Lambda^\sharp})\circ D(\lambda)\circ D(\rho_{\Lambda, X})$.
By definition, the image $\mathrm{Im}(D(\rho_{\Lambda, \Lambda^\sharp}))$ is a locally free direct summand of $D(X_{\Lambda^\sharp, R})$ of corank $2t$, such that
\[
D(X_{\Lambda^\sharp, R}) / \mathrm{Im}(D(\rho_{\Lambda, \Lambda^\sharp})) \simeq \Lambda^\sharp / \Lambda \otimes_{\bar{k}} R = V_{\Lambda, R}.
\]
It is easy to see that we have a symplectic form $\bb ~ ,~ \pp$ on $V_{\Lambda, R}$ given by $\bb x,y\pp=\pi h(\tilde{x},\tilde{y})$, where $\tilde{x}$ and $\tilde{y}$ are lifting points of $x, y$ in $\Lambda^\sharp$.

Since $\Lambda$ is a vertex lattice, we have $\ker(\rho_{\Lambda, \Lambda^\sharp}) \subset X_\Lambda[\iota(\pi)]$. This implies that the kernel of the
composition
\[
\rho_{X, \Lambda^\sharp} := \rho_{X^\vee, \Lambda^\sharp} \circ \lambda : X \to X_{\Lambda^\sharp, R}
\]
lies in $X[\iota(\pi)]$. Therefore, there exists an isogeny $\tilde{\rho}_{X, \Lambda^\sharp} : X_{\Lambda^\sharp, R} \to X$ such that
\[
\tilde{\rho}_{X, \Lambda^\sharp} \circ \rho_{X, \Lambda^\sharp} = \iota(\pi) : X \to X.
\]

Recall that $\operatorname{Fil}(X)\subset D(X)$ is the Hodge filtration, $\operatorname{Fil}^0(X)\subset  \operatorname{Fil}(X)$ is a locally direct summand of rank $1$ and $\mathcal{F} \subset  \operatorname{Lie}(X^\vee)$ is the perpendicular complement of $ \operatorname{Fil}^0(X)$ under the perfect pairing (\ref{perf.pair.}) of rank $n-1$. We have the following diagram:
\begin{equation}
\begin{tikzcd}
D(X_{\Lambda,R}) \arrow[r,"D(\rho_{\Lambda, X^\vee})"] 
&
D(X^\vee) \arrow[r,"D(\rho_{X^\vee, \Lambda^\sharp})"] 
&
D(X_{\Lambda^{\sharp},R}) \arrow[r,"D(\tilde{\rho}_{X, \Lambda^\sharp})"] 
&
D(X)
\\
&
\mathrm{Pr}^{-1}(\calF)\arrow[u,hook]
&&
\operatorname{Fil}(X) \arrow[u,hook]
\end{tikzcd},    
\end{equation}
where $ \operatorname{Pr}: D(X^\vee) \rightarrow \operatorname{Lie}(X^\vee)$ is the natural quotient homomorphism of $R$-modules. 
\begin{Lemma}\label{lm 64}
For any $\bar{k}$-algebra $R$ and an $R$-point $(X, \iota, \lambda, \rho, \mathcal{F}) \in \mathcal{Z}^{\rm spl}(\Lambda)(R)$, the preimage $D(\tilde{\rho}_{X, \Lambda^\sharp})^{-1}(\mathrm{Fil}(X))$ (resp. the image $D(\rho_{X^\vee, \Lambda^\sharp})(\mathrm{Pr}^{-1}(\calF))$ ) is a locally free direct summand of $D(X_{\Lambda^\sharp, R})$ that contains
$\mathrm{Im}(D(\rho_{\Lambda, \Lambda^\sharp}))$. Moreover, the quotients
\[
\begin{array}{c}
 U(X) := D(\tilde{\rho}_{X, \Lambda^\sharp})^{-1}(\mathrm{Fil}(X)) / \mathrm{Im}(D(\rho_{\Lambda, \Lambda^\sharp})),  \\
U'(X) := D(\rho_{X^\vee,\Lambda^{\sharp}})(\operatorname{Pr}^{-1}(\mathcal{F}))/\mathrm{Im}(D(\rho_{\Lambda, \Lambda^\sharp})).
\end{array}
\]
are locally free direct summands of $V_{\Lambda, R}$ of ranks $t - h$  and $t+h-1$ respectively.
\end{Lemma}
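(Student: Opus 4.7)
The plan is to work \'{e}tale locally via the local model diagram of Section \ref{LPBT} and translate the entire statement into a direct calculation on the strata splitting model $\M^{\spl,[2h]}_n(2t)$. \'{E}tale locally on $\calZ^\spl(\Lambda)$, the de Rham realizations $D(X_{\Lambda,R})$, $D(X)$, $D(X^\vee)$, $D(X_{\Lambda^\sharp,R})$ are identified with the standard lattices $\Lambda_{-t,R}$, $\Lambda_{-h,R}$, $\Lambda_{h,R}$, $\Lambda_{t,R}$, and the morphisms $D(\rho_{\Lambda,X})$, $D(\lambda)$, $D(\rho_{X^\vee,\Lambda^\sharp})$ become the natural transition maps in this lattice chain. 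Under these identifications, $\mathrm{Fil}(X)$ corresponds to $\calF_{-h}$, and by Remark \ref{DualSpace} combined with the perfect pairing $\Lambda_{-h,R}\times \Lambda_{h,R}\to R$, the subsheaf $\mathrm{Pr}^{-1}(\calF)\subset D(X^\vee)$ corresponds to the rank-$(2n-1)$ direct summand $\calG_{-h}^\bot\subset \Lambda_{h,R}$. Moreover, $\mathrm{Im}(D(\rho_{\Lambda,\Lambda^\sharp}))$ is identified with the image of the natural map $\Lambda\otimes_{O_{F_0}}R \to \Lambda^\sharp\otimes_{O_{F_0}}R$, whose cokernel is precisely $V_{\Lambda,R}$.

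With these identifications, the local direct-summand property for both $D(\tilde\rho_{X,\Lambda^\sharp})^{-1}(\mathrm{Fil}(X))$ and $D(\rho_{X^\vee,\Lambda^\sharp})(\mathrm{Pr}^{-1}(\calF))$ inside $\Lambda_{t,R}$ can be read off from the explicit affine charts of $\M^{\spl,[2h]}_n(2t)$ constructed in Proposition \ref{prop X-strata}, where each of these subsheaves is cut out by the vanishing of minors of matrices having a unit entry. The containment $\mathrm{Im}(D(\rho_{\Lambda,\Lambda^\sharp}))\subset D(\tilde\rho_{X,\Lambda^\sharp})^{-1}(\mathrm{Fil}(X))$ is equivalent to $D(\tilde\rho_{X,\Lambda^\sharp})(\Lambda\otimes R)\subset \mathrm{Fil}(X)$, which follows from the factorization $\tilde\rho_{X,\Lambda^\sharp}\circ \rho_{X,\Lambda^\sharp}=\iota(\pi)$ together with the Kottwitz condition forcing $\iota(\pi)\cdot D(X)\subset \mathrm{Fil}(X)$. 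The containment $\mathrm{Im}(D(\rho_{\Lambda,\Lambda^\sharp}))\subset D(\rho_{X^\vee,\Lambda^\sharp})(\mathrm{Pr}^{-1}(\calF))$ is the de Rham realization of the inclusion $\Lambda\otimes W\subset M'(X)$ furnished by Proposition \ref{DieudLatt}(1).

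The rank assertions reduce to a Tor-sequence bookkeeping using that $\Lambda^\sharp\otimes W/M(X)^\sharp$ and $M(X)/(\Lambda\otimes W)$ each have $O_{F_0}$-length $t-h$ (by Proposition \ref{DieudLatt} and hermitian duality), while $M(X)^\sharp/M(X)$ has length $2h$. Combining this with the ranks of $\mathrm{Fil}(X)$ (namely $n$) and $\mathrm{Pr}^{-1}(\calF)$ (namely $2n-1$), and the observation that the kernel of $D(\rho_{X^\vee,\Lambda^\sharp})$ lies in $\mathrm{Pr}^{-1}(\calF)$ (since $\mathrm{Fil}(X^\vee)\subset \mathrm{Pr}^{-1}(\calF)$), one obtains $U(X)$ of rank $t-h$ and $U'(X)$ of rank $t+h-1$. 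The main technical obstacle is keeping careful track of the Tor contributions when tensoring $W_{O_{F_0}}$-lattice inclusions down to a general $\bar{k}$-algebra $R$, and confirming that the kernels of the transition maps are correctly positioned with respect to the Hodge filtrations; however, since all claims concern local direct-summand and rank properties which are preserved under \'{e}tale base change, it suffices to verify them on the affine charts of $\M^{\spl,[2h]}_n(2t)$ from Section \ref{StrataSpl}, reducing the proof to elementary linear algebra on the explicit matrix data.
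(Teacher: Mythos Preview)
Your overall strategy---reducing to the strata splitting model via the local model diagram and verifying everything on the explicit affine charts---is a valid alternative to the paper's approach, but the paper's proof is far more direct: it observes that local freeness and rank of images/kernels of fixed maps between locally free modules can be checked on geometric points, so it suffices to work over $\bar{k}$; there Dieudonn\'e theory immediately identifies $U(X)\simeq \Phi^{-1}(M(X))/\breve\Lambda$ (citing \cite{HLS2}) and $U'(X)\simeq M'(X)/\breve\Lambda$, and the rank claims follow at once from the lattice chain in Proposition~\ref{DieudLatt}. Your route would also succeed but involves substantially more bookkeeping.

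There is, however, a genuine error in your argument. You claim the containment $\mathrm{Im}(D(\rho_{\Lambda,\Lambda^\sharp}))\subset D(\tilde\rho_{X,\Lambda^\sharp})^{-1}(\mathrm{Fil}(X))$ follows from ``the Kottwitz condition forcing $\iota(\pi)\cdot D(X)\subset \mathrm{Fil}(X)$.'' This last assertion is false: the Kottwitz condition only constrains the characteristic polynomial of $\iota(\pi)$ acting on $\mathrm{Fil}(X)$ and says nothing about the image of $\iota(\pi)$ on all of $D(X)$. On Dieudonn\'e lattices over $\bar{k}$, the claim $\iota(\pi)\cdot D(X)\subset \mathrm{Fil}(X)$ reads $\Pi M\subset VM$, i.e.\ $M\subset \tau^{-1}(M)$; but by Proposition~\ref{DLoc} one only has $M\mathrel{\overset{\scriptstyle \leq 1}{\subset}} M+\tau(M)$, so this fails whenever $M\neq \tau(M)$. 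The correct argument uses instead that $\breve\Lambda$ is $\tau$-invariant: then $\Pi\breve\Lambda=V\breve\Lambda\subset VM(X)$, which gives the desired inclusion after reducing modulo $\pi_0$. With this correction your approach goes through, though the paper's pointwise argument remains cleaner.
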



\begin{proof}
It is sufficient to check the condition on $\bar{k}$-points of $\mathcal{Z}^{\rm spl}(\Lambda)$. The proof of the $U(X)$ part can be found in \cite[Lemma 7.1]{HLS2}. Note that $U(X)$ is isomorphic to $\Phi^{-1}(M(X))/\breve{\Lambda}$ where $\breve{\Lambda}:=\Lambda\otimes W_{O_{F_0}}(\bar{k})$. 
For the $U'(X)$ part, consider the chain of Dieudonn\'e lattices
\[
\breve{\Lambda}\subset M'(X)\subset M(X)^\sharp \subset \breve{\Lambda}^\sharp,
\]
corresponding to a point $(X, \iota, \lambda, \rho, \mathcal{F}) \in \mathcal{Z}^{\rm spl}(\Lambda)(\bar{k})$. By Proposition \ref{DieudLatt}, we deduce that $\mathrm{Im}(D(\rho_{\Lambda, \Lambda^\sharp})) \subset D(\rho_{X^\vee,\Lambda^{\sharp}})(\operatorname{Pr}^{-1}(\mathcal{F}))$ and 
\[
U'(X)=D(\rho_{X^\vee,\Lambda^{\sharp}})(\operatorname{Pr}^{-1}(\mathcal{F}))/\mathrm{Im}(D(\rho_{\Lambda, \Lambda^\sharp})\simeq M'(X)/\breve{\Lambda}\subset M(X)^\sharp/\breve{\Lambda}.
\]
Note that $M(X)^\sharp/\breve{\Lambda}$ is of dimension $t+h$. Thus, $U'(X)$ is a locally free direct summand of $V_{\Lambda, R}$ of rank $t+h-1$. This finishes the proof of the lemma.
\end{proof}

\begin{Proposition}\label{BijZstrata}
Let $ \kappa$ be a perfect field over $\bar{k}$. There exists a bijective morphism $ f_{\calZ}: \mathcal{Z}^{\rm spl}(\Lambda)(\kappa) \longrightarrow S_{\Lambda}'(\kappa)$ given by $(X, \iota, \lambda, \rho, \calF) \mapsto (U(X), U'(X))$. 
\end{Proposition}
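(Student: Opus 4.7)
The plan is to pass to the lattice-theoretic description of Proposition~\ref{DieudLatt} and reduce everything to a direct verification on Dieudonné modules. A $\kappa$-point of $\calZ^\spl(\Lambda)$ corresponds to a pair $(M,M')$ of $W_{O_{F_0}}(\kappa)$-lattices satisfying $\breve\Lambda\subset M\subset M^\sharp\subset\breve\Lambda^\sharp$, $\breve\Lambda\subset M'\subset M^\sharp$, together with the constraints of Proposition~\ref{DSpl}. Setting $V_{\Lambda,\kappa}:=\breve\Lambda^\sharp/\breve\Lambda$, equipped with its induced symplectic form and with the Frobenius $\Phi$ descending from the $\sigma$-linear operator $\tau$ on $N\otimes W_{O_{F_0}}(\kappa)$, the recipe of Lemma~\ref{lm 64} gives
\[
f_\calZ\colon (M,M')\longmapsto \bigl(\,\tau^{-1}(M)/\breve\Lambda,\;M'/\breve\Lambda\,\bigr),
\]
and it suffices to check that this is a well-defined bijection onto $S'_\Lambda(\kappa)$.

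First I would verify that $f_\calZ$ lands in $S'_\Lambda(\kappa)$. The identification $\tau^{-1}(M)^\sharp=\tau^{-1}(M^\sharp)$ (hermitian duality commuting with $\tau$) shows that the symplectic dual in $V_{\Lambda,\kappa}$ is $U(X)^\sharp=\tau^{-1}(M^\sharp)/\breve\Lambda$, so isotropy of $U(X)$ is just $M\subset M^\sharp$. The rank equalities $\dim U(X)=t-h$ and $\dim U'(X)=t+h-1$ follow from $[M:\breve\Lambda]=[\breve\Lambda^\sharp:M^\sharp]=t-h$ (using $[M^\sharp:M]=2h$ and $[\breve\Lambda^\sharp:\breve\Lambda]=2t$) and from $[M^\sharp:M']=1$. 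The two inclusions $U'(X)\subset U(X)^\sharp$ and $U'(X)\subset \Phi(U(X)^\sharp)=M^\sharp/\breve\Lambda$ translate to $M'\subset \tau^{-1}(M^\sharp)$ and $M'\subset M^\sharp$ respectively, both of which appear in Proposition~\ref{DSpl}.

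To produce the inverse, given $(U,U')\in S'_\Lambda(\kappa)$ I would lift to preimages $\tilde U,\tilde{U'}\subset\breve\Lambda^\sharp$ under the projection $\breve\Lambda^\sharp\to V_{\Lambda,\kappa}$ and set $M:=\tau(\tilde U)$, $M':=\tilde{U'}$. Isotropy of $U$ yields $M\subset M^\sharp$ with $[M^\sharp:M]=2h$. The chain relations $\Pi M^\sharp\subset M$ and $\Pi M\subset\tau^{-1}(M)\subset\Pi^{-1}M$ reduce to the vertex-lattice condition $\Pi\breve\Lambda^\sharp\subset\breve\Lambda$: for instance $\Pi M^\sharp=\tau(\Pi\tilde U^\sharp)\subset\tau(\breve\Lambda)=\breve\Lambda\subset M$, and the others are similar using that $\Pi$ commutes with $\tau$ and that $\tau$ preserves $\breve\Lambda$. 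The conditions $VM^\sharp\subset M'\subset\tau^{-1}(M^\sharp)\cap M^\sharp$ with $[M^\sharp:M']=1$ become exactly $U'\subset U^\sharp\cap\Phi(U^\sharp)$ with $\dim U'=t+h-1$, the defining conditions of $S'_\Lambda$. That the two constructions are mutually inverse is immediate: $\tau^{-1}(\tau(\tilde U))/\breve\Lambda=\tilde U/\breve\Lambda=U$, and $M'$ is recovered tautologically.

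The main obstacle I foresee is careful bookkeeping: matching each length and inclusion in Proposition~\ref{DSpl} (including the Kottwitz/wedge-type constraints implicit in the moduli problem) with membership in $S'_\Lambda(\kappa)$, and verifying that these are preserved by passage through $\tau$. In every case the key input is the vertex-lattice condition $\Pi\breve\Lambda^\sharp\subset\breve\Lambda$ combined with the fact that $\tau$ is a $\sigma$-linear bijection preserving both $\breve\Lambda$ and $\breve\Lambda^\sharp$.
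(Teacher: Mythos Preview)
Your proposal is correct and follows essentially the same approach as the paper: pass to Dieudonn\'e lattices via Proposition~\ref{DieudLatt}, identify $(U(X),U'(X))$ with $(\Phi^{-1}(M/\breve\Lambda),\,M'/\breve\Lambda)$, and construct the inverse by pulling back along $\breve\Lambda^\sharp\to V_{\Lambda,\kappa}$, using the vertex-lattice inclusion $\Pi\breve\Lambda^\sharp\subset\breve\Lambda$ to verify all remaining chain conditions. One small imprecision: the condition $VM^\sharp\subset M'$ does not ``become'' part of the $S'_\Lambda$ description---it is automatic once $\breve\Lambda\subset M'$, since $VM^\sharp\subset V\breve\Lambda^\sharp=\Pi\breve\Lambda^\sharp\subset\breve\Lambda$; the paper checks this separately, and you should too rather than folding it into the equivalence.
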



\begin{proof}
By Dieudonn\'e theory, a point $z\in \mathcal{Z}^{\rm spl}(\kappa)$ corresponds to a pair of lattices $(M, M')$ satisfying
\[
\breve{\Lambda}\subset M\subset M^\sharp \subset \breve{\Lambda}^\sharp, \quad  \breve{\Lambda}\subset M'\subset M^\sharp \subset \breve{\Lambda}^\sharp.
\]
Similar to Lemma \ref{lm 64}, we can show that 
\[
U(X)\simeq \Phi^{-1}(M)/\breve{\Lambda}, \quad
U'(X)\simeq M'/\breve{\Lambda}.
\]
Note that $U(X)$ is contained in the dual lattice $U(X)^\sharp=\Phi^{-1}(M)^\sharp/\breve{\Lambda}$. Thus, the pair $(U(X), U'(X))$ belongs to $\left(\mathrm{SGr}(t-h, V_{\Lambda,\bar{k}}) \times \mathrm{Gr}(t+h-1, V_{\Lambda,\bar{k}})\right)(\kappa)$. The relation $M' \subset \Phi^{-1}(M^\sharp) \cap M^\sharp$ in Proposition \ref{DSpl} is equivalent to 
\[
U'(X)\subset U(X)^\sharp\cap \Phi(U(X))^\sharp.
\]
Therefore, $f_{\calZ}(z)\in S_{\Lambda}'(\kappa)$.

Conversely, assume $(U, U') \in S'_{\Lambda}(\kappa)$ and let $M = \mathrm{Pr}^{-1}(\Phi(U))$ and $M' = \mathrm{Pr}^{-1}(U')$, where $\mathrm{Pr} : \breve{\Lambda}^\sharp \to \breve{\Lambda}^\sharp/\breve{\Lambda}$ is the natural quotient map. Then, by definition we have $\breve{\Lambda}\subset M $, $\breve{\Lambda} \subset M'$ and $M'\subset \tau^{-1}(M^\sharp)\cap M^\sharp$. To show that $(M, M') \in \mathcal{Z}^{\mathrm{spl}}(\Lambda)(\kappa)$, it suffices to show that 
\[
V M^{\sharp} \subset M',\quad \Pi M^\sharp\subset M,\quad \Pi M\subset \tau^{-1}(M)\subset \Pi^{-1}M.
\]
Observe that $V M^{\sharp} \subset V \breve{\Lambda}^\sharp = \Pi \breve{\Lambda}^\sharp \subset \breve{\Lambda} \subset M'$ and so $V M^{\sharp} \subset M'$. Similarly, $\Pi M^\sharp\subset \Pi \breve{\Lambda}^\sharp\subset \breve{\Lambda}\subset M$, $\Pi M\subset \Pi \breve{\Lambda}^\sharp\subset \breve{\Lambda}\subset \tau^{-1}M$ and $\tau^{-1}M\subset \breve{\Lambda}^\sharp\subset \Pi^{-1}\breve{\Lambda}\subset \Pi^{-1}M$. This shows that $(M, M ')$ satisfies the conditions in Propositions \ref{DSpl} and \ref{DieudLatt}. Hence, $f_{\mathcal{Z}}$ defines a bijection
between $\mathcal{Z}^{\rm spl}(\Lambda)(\kappa)$ and $S'_{\Lambda}(\kappa)$.
\end{proof}

\begin{Theorem}\label{Isom.X}
The map $f_{\mathcal{Z}}:  \mathcal{Z}^{\rm spl}(\Lambda)\rightarrow S'_{\Lambda} $ is a closed immersion.
\end{Theorem}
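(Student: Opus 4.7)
The plan is to prove that $f_{\mathcal{Z}}$ is a proper monomorphism of schemes, which by a standard criterion is equivalent to being a closed immersion. The verification splits into four parts: promotion to a morphism of schemes, properness, universal injectivity, and unramifiedness.

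The first three parts are relatively direct. Lemma \ref{lm 64} already provides the construction of $U(X)$ and $U'(X)$ as locally free direct summands of $V_{\Lambda, R}$ for an arbitrary $\bar{k}$-algebra $R$, and the incidence conditions $\langle U, U\rangle = 0$ and $U' \subset U^{\sharp} \cap \Phi(U^{\sharp})$ defining $S'_{\Lambda}$ translate from the lattice conditions $\Pi M^{\sharp} \subset M \subset M^{\sharp}$ and $M' \subset M^{\sharp} \cap \tau^{-1}(M^{\sharp})$ of Proposition \ref{DSpl} via the identifications $U(X) \simeq \Phi^{-1}(M(X))/\breve{\Lambda}$ and $U'(X) \simeq M'(X)/\breve{\Lambda}$; this upgrades the pointwise assignment to a morphism of functors, hence of schemes. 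Properness is immediate, since both source and target are projective over $\bar{k}$. Universal injectivity follows from Proposition \ref{BijZstrata}, which gives bijectivity on $\kappa$-points for every perfect field extension $\kappa/\bar{k}$, together with the reducedness of $\mathcal{Z}^{\rm spl}(\Lambda)$ established in Corollary \ref{Reducedness}.

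The main obstacle is the unramifiedness of $f_{\mathcal{Z}}$, that is, injectivity of the differential on Zariski tangent spaces at every $\bar{k}$-point. My approach would be to invoke the local model diagram of Section \ref{LPBT}, which \'etale-locally identifies $\mathcal{Z}^{\rm spl}(\Lambda)$ with the strata splitting model $\M^{\spl, [2h]}_n(2t)$ whose tangent spaces are computable from the affine charts of Proposition \ref{prop X-strata} in the explicit matrix coordinates $V_{1,1}, V_{1,2}, V_{2,1}, Z_{2,3}$. On the target side, $S'_{\Lambda}$ is a closed subvariety of $\mathrm{SGr}(t-h, V_{\Lambda,\bar{k}}) \times \mathrm{Gr}(t+h-1, V_{\Lambda,\bar{k}})$ with tangent spaces given by the standard $\mathrm{Hom}$-description constrained by the incidence condition. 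The differential $df_{\mathcal{Z}}$ becomes an explicit linear map between these, and injectivity reduces to a linear-algebraic verification.

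Equivalently, one can argue by deformation theory: a first-order lift $(\tilde{U}, \tilde{U}')$ of $(U, U')$ in $S'_{\Lambda}(\bar{k}[\epsilon])$ determines, via $\tilde{M} := \mathrm{Pr}^{-1}(\Phi(\tilde{U}))$ and $\tilde{M}' := \mathrm{Pr}^{-1}(\tilde{U}')$ applied to the universal lift of $\breve{\Lambda}^{\sharp}/\breve{\Lambda}$, a unique compatible lift of $(M, M')$; the delicate step is to check that this lift automatically satisfies the Kottwitz--wedge--splitting conditions of Proposition \ref{DSpl} at the infinitesimal level, which is precisely where the affine-chart description of the splitting model becomes essential.
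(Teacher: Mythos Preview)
Your overall strategy is sound, and you correctly isolate unramifiedness as the nontrivial step. The paper, however, takes a shorter and different route: rather than computing tangent spaces, it extends Proposition~\ref{BijZstrata} from perfect fields to \emph{all} fields over $\bar{k}$ by replacing Dieudonn\'e theory with the theory of displays (citing \cite[Proposition~7.5]{HLS2}). With the explicit inverse $(U,U')\mapsto (M,M')$ of Proposition~\ref{BijZstrata} then available over every field, the paper concludes that $f_{\mathcal{Z}}$ is a monomorphism, and proper plus monomorphism gives the closed immersion directly --- no separate unramifiedness check is carried out.

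Your proposed unramifiedness argument, by contrast, is only sketched and faces a real obstacle. The local model diagram of Section~\ref{LPBT} identifies $\mathcal{Z}^{\rm spl}(\Lambda)$ \'etale-locally with $\M^{\spl,[2h]}_n(2t)$, but it carries no information about $f_{\mathcal{Z}}$: the trivialisation appearing in the diagram is unrelated to the passage to $S'_\Lambda$, so there is no evident way to express $df_{\mathcal{Z}}$ in the affine-chart coordinates of Proposition~\ref{prop X-strata}. Your alternative deformation-theoretic formulation is closer to the mark, but to lift $(M,M')$ from $\bar{k}$ to $\bar{k}[\epsilon]$ and recognise the result as a point of $\mathcal{Z}^{\rm spl}(\Lambda)(\bar{k}[\epsilon])$ one needs a Dieudonn\'e-type equivalence over the non-reduced ring $\bar{k}[\epsilon]$ --- precisely what display theory supplies. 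So once made rigorous, your second approach collapses into the paper's.

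One small correction: reducedness of the source is not what yields universal injectivity. For a separated morphism of finite-type $\bar{k}$-schemes, injectivity on $\bar{k}$-points already forces the diagonal into $\mathcal{Z}^{\rm spl}(\Lambda)\times_{S'_\Lambda}\mathcal{Z}^{\rm spl}(\Lambda)$ to be surjective on closed points, hence surjective; this is radiciality.
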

\begin{proof}
From Lemma \ref{BijZstrata}, we know that $f_{\mathcal{Z}}$ is a bijection for any perfect field $\kappa$ over $\bar{k}$. Moreover, as in \cite[Proposition 7.5]{HLS2}, using the theory of displays, we can show—by the same proof as above—that this bijection extends to any field $\kappa'$ over $\bar{k}$. 
In particular, we obtain that $f_\calZ$ is a monomorphism. Note that $f_{\mathcal{Z}}$ is proper as a morphism between projective varieties. From the above we deduce that $f_{\mathcal{Z}}$ is a closed immersion. 
\end{proof}

\begin{Corollary}
The BT-stratum $ \mathcal{Z}^{\rm spl}(\Lambda)$ is irreducible.    
\end{Corollary}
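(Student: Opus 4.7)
The plan is to deduce irreducibility by combining the normality of $\mathcal{Z}^{\rm spl}(\Lambda)$ established in Corollary~\ref{Reducedness} with a connectedness argument carried out on the Deligne--Lusztig side. Recall that a normal scheme is irreducible if and only if it is connected, so it suffices to prove connectedness.

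First I would upgrade the closed immersion of Theorem~\ref{Isom.X} to an honest isomorphism. By construction, $S'_{\Lambda}$ is defined as a \emph{reduced} closed subscheme of $\mathrm{SGr}(t-h, V_{\Lambda,\bar{k}}) \times \mathrm{Gr}(t+h-1, V_{\Lambda,\bar{k}})$, and by Corollary~\ref{Reducedness} the source $\mathcal{Z}^{\rm spl}(\Lambda)$ is reduced as well. Since $f_{\mathcal{Z}}$ is a closed immersion that is bijective on points of every field over $\bar{k}$, it identifies $\mathcal{Z}^{\rm spl}(\Lambda)$ with a reduced closed subscheme of the reduced scheme $S'_{\Lambda}$ having the same underlying topological space. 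Hence $f_{\mathcal{Z}}$ is an isomorphism, and in particular $S'_{\Lambda}$ is normal of dimension $t+h$.

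Next I would show that $S'_{\Lambda}$ is connected using the forgetful map $\varphi_{\Lambda}: S'_{\Lambda} \to S_{\Lambda}$ from Lemma~\ref{lm 61}. This morphism is projective, and it is surjective: over the open locus $S_{\Lambda} \setminus T$ it is an isomorphism, while over each $\bar{k}$-point of $T$ the fiber is isomorphic to $\mathbb{P}^{t+h-1}_{\bar{k}}$. In particular every (geometric) fiber of $\varphi_{\Lambda}$ is connected. Since the base $S_{\Lambda}$ is irreducible (recalled at the end of \S\ref{symsec}), it is connected, so a standard argument—e.g.\ via the Stein factorization of the proper morphism $\varphi_{\Lambda}$, whose fibers being geometrically connected force the Stein factor to be a finite birational morphism onto the normal irreducible scheme $S_{\Lambda}$, hence an isomorphism—shows that $S'_{\Lambda}$ is connected.

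Combining the two steps, $\mathcal{Z}^{\rm spl}(\Lambda) \cong S'_{\Lambda}$ is normal and connected, and therefore irreducible. The only real subtlety is the connectedness-of-the-source argument for the proper map $\varphi_\Lambda$; one could alternatively bypass Stein factorization and instead argue directly that the open dense subscheme $\varphi_{\Lambda}^{-1}(S_{\Lambda}\setminus T)$ is irreducible of dimension $t+h$ (as in Corollary~\ref{Cor 62}), and that its closure $X_0$ meets every $\mathbb{P}^{t+h-1}_{\bar{k}}$-fiber over $T$ by showing these fibers lie in the closure of the isomorphism locus, so that $X_0 = S'_{\Lambda}$ and no extra component exists. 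Either route yields the claim.
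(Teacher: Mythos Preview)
Your proof is correct, but it follows a different route from the paper's. The paper argues more directly: since $\mathcal{Z}^{\rm spl}(\Lambda)$ is normal and (via the local model diagram) equidimensional of dimension $t+h$, its irreducible components are disjoint and each has dimension $t+h$; under the closed immersion $f_{\mathcal{Z}}$ each such component must land inside the \emph{unique} irreducible component $X_0$ of $S'_\Lambda$ of dimension $t+h$ (Corollary~\ref{Cor 62}), and being closed irreducible of full dimension in $X_0$, each equals $X_0$---forcing there to be only one. You instead first upgrade $f_{\mathcal{Z}}$ to an isomorphism using bijectivity on field-valued points together with reducedness of both sides, and then prove connectedness of $S'_\Lambda$ by analyzing the proper map $\varphi_\Lambda\colon S'_\Lambda\to S_\Lambda$ with its connected fibers over the irreducible base. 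Your route has the pleasant side effect of establishing at once that $f_{\mathcal{Z}}$ is an isomorphism and that $S'_\Lambda$ is irreducible, results the paper obtains only afterwards in Proposition~\ref{S'irreducibility}; the paper's route, on the other hand, avoids any appeal to Stein factorization or to the normality of $S_\Lambda$ (which, while true via $\mathcal{Z}^{\rm loc}(\Lambda)\cong S_\Lambda$ and Theorem~\ref{StrataLocalModel}, is not stated explicitly here). Note also that your Stein-factorization step can be replaced by the elementary topological fact that a closed surjection with connected fibers over a connected base has connected source: since $T$ is zero-dimensional, every point of $S_\Lambda$ is either in $\mathcal{U}_1$ (where $\varphi_\Lambda$ is an isomorphism) or a closed point of $T$ (with fiber $\mathbb{P}^{t+h-1}$), so all scheme-theoretic fibers are connected.
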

\begin{proof}
Recall from Corollary \ref{Reducedness} that $\mathcal{Z}^{\rm spl}(\Lambda)$ is normal, Cohen-Macaulay 
and of dimension $t+h$. Combining this with the above theorem and with the fact that $S'_{\Lambda} $ has a unique irreducible component of dimension $t+h$ (see Corollary \ref{Cor 62}) the irreducibility of $ \mathcal{Z}^{\rm spl}(\Lambda)$ follows. 
\end{proof}

\begin{Proposition}\label{S'irreducibility}
     $S'_{\Lambda}$ is irreducible.
\end{Proposition}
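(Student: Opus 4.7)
The plan is to upgrade the closed immersion $f_{\mathcal{Z}}\colon \mathcal{Z}^{\rm spl}(\Lambda)\hookrightarrow S'_{\Lambda}$ established in Theorem \ref{Isom.X} to a scheme-theoretic isomorphism, and then transport the irreducibility of $\mathcal{Z}^{\rm spl}(\Lambda)$ (proved in the corollary immediately preceding this proposition) across that isomorphism.

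First, I would invoke Proposition \ref{BijZstrata}, which gives a bijection $\mathcal{Z}^{\rm spl}(\Lambda)(\bar{k})\to S'_{\Lambda}(\bar{k})$. Since both $\mathcal{Z}^{\rm spl}(\Lambda)$ and $S'_{\Lambda}$ are of finite type over the algebraically closed field $\bar{k}$, their $\bar{k}$-points are topologically dense in each. The image of the closed immersion $f_{\mathcal{Z}}$ is a closed subset of $|S'_{\Lambda}|$ that contains every $\bar{k}$-point, hence contains a dense subset, and therefore equals $|S'_{\Lambda}|$ as a topological space.

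Next, let $\mathcal{I}\subset \mathcal{O}_{S'_{\Lambda}}$ be the ideal sheaf cutting out $\mathcal{Z}^{\rm spl}(\Lambda)$. The previous step gives $V(\mathcal{I})=|S'_{\Lambda}|$, which forces $\mathcal{I}$ to be contained in the nilradical sheaf of $\mathcal{O}_{S'_{\Lambda}}$. But $S'_{\Lambda}$ is defined (in \S \ref{symsec}) as the \emph{reduced} closed subscheme of $\mathrm{SGr}(t-h,V_{\Lambda,\bar{k}})\times \mathrm{Gr}(t+h-1,V_{\Lambda,\bar{k}})$ carrying the incidence condition, so this nilradical vanishes. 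Therefore $\mathcal{I}=0$ and $f_{\mathcal{Z}}$ is an isomorphism of schemes.

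Finally, the preceding corollary establishes that $\mathcal{Z}^{\rm spl}(\Lambda)$ is irreducible, so the irreducibility of $S'_{\Lambda}$ follows at once from the isomorphism $f_{\mathcal{Z}}\colon \mathcal{Z}^{\rm spl}(\Lambda)\xrightarrow{\sim} S'_{\Lambda}$. There is no serious obstacle in this argument: the nontrivial content — the bijection on geometric points and the reducedness of $S'_{\Lambda}$ — is already built into the setup of \S \ref{symsec} and Theorem \ref{Isom.X}, so the present proof amounts only to assembling these ingredients and invoking density of closed points.
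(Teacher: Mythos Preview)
Your argument is correct and in fact slightly more efficient than the paper's. Both proofs use that $f_{\mathcal{Z}}$ is a closed immersion bijective on geometric points and that $\mathcal{Z}^{\rm spl}(\Lambda)$ is irreducible, but they finish differently. The paper first appeals to the component analysis of Corollary \ref{Cor 62} to identify $\operatorname{Im}(f_{\mathcal{Z}})$ with the unique top-dimensional component $X_0$, and then rules out any lower-dimensional component $X_1$ by observing that each fiber $\varphi_{\Lambda}^{-1}(t)\cong\mathbb{P}^{t+h-1}_{\bar{k}}$ over $t\in T$ is irreducible and meets $X_0$, hence lies in $X_0$. You instead exploit directly that $S'_{\Lambda}$ is \emph{defined} as a reduced subscheme in \S\ref{symsec}: the surjective closed immersion into a reduced target must then be an isomorphism, and irreducibility transfers immediately. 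Your route bypasses the fiber argument entirely and yields as a bonus the explicit statement that $f_{\mathcal{Z}}$ is an isomorphism of schemes, which the paper leaves implicit.
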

\begin{proof}
By the proof of Corollary \ref{S'irreducibility}, $S'_{\Lambda}$ has a unique component $X_0$ of dimension $t+h$.
The closed immersion $f_{\mathcal{Z}}:  \mathcal{Z}^{\rm spl}(\Lambda)\rightarrow S'_{\Lambda} $ is bijective on geometric points and $\dim Z^{\mathrm{spl}}(\Lambda)=t+h$, hence $\operatorname{Im}(f_{\mathcal{Z}})=X_0$. If $X_1\neq X_0$ were another component, then for any $t\in T$ the fiber
$\varphi_\Lambda^{-1}(t)$ is irreducible and equals $\mathbb{P}^{t+h-1}_{\bar{k}}$ (see Lemma \ref{lm 61}).
Since it meets $X_0$, irreducibility forces $\varphi_\Lambda^{-1}(t)\subset X_0$,
contradicting $X_1\neq X_0$. Thus $S'_{\Lambda}$ is irreducible.
\end{proof}

\begin{Remark}
{\rm
From \cite[Theorem 7.3]{HLS2}, there exists an isomorphism $\Phi_{\calZ}: \calZ^\loc(\Lambda)\rightarrow S_{\Lambda}$ given by $(X, \iota, \lambda, \rho) \mapsto U(X)$. It is easy to see that we have the following commutative diagram:
\[
\begin{tikzcd}
\calZ^\spl(\Lambda) \arrow[r,"f_\calZ"]\arrow[d,"\mathrm{Pr}_1"]
&
S_{\Lambda}'\arrow[d,"\mathrm{Pr}_2"]
\\
\calZ^\loc(\Lambda)\arrow[r,"\Phi_\calZ"]
&
S_\Lambda
\end{tikzcd},
\]
where $\mathrm{Pr}_1$ is given by $(X, \iota, \lambda, \rho, \calF)\mapsto (X, \iota, \lambda, \rho)$ and $\mathrm{Pr}_2$ is given by $(U, U')\mapsto U$.

}
\end{Remark}
\quash{
\begin{Lemma}
The map $g_{\mathcal{Z}}$ defines a bijection between $\mathcal{Z}^{\rm spl}(\Lambda)(\kappa)$ and $S'_{\Lambda}(\kappa)$ where $\kappa$ is a perfect field over $\bar{k}$.
\end{Lemma}
\begin{proof}
A point $x \in \mathcal{Z}^{\rm spl}(\Lambda)(\kappa)$ corresponds to a pair $(M, M')$ as in Proposition \ref{DieudLatt}. By the definition of
$g_{\mathcal{Z}}$ we have $g_{\mathcal{Z}}(x) = (U, U')$ where
\[
(U, U') = (\Pi^{-1} V M / \breve{\Lambda}, M' / \breve{\Lambda}) = (\tau^{-1}(M)/\breve{\Lambda}, M' / \breve{\Lambda}) = (\Phi^{-1}(M/\breve{\Lambda}), M'/\breve{\Lambda}).
\]
From (a) of Lemma \ref{bijectionLoc} and its proof to show that $g_{\mathcal{Z}}(x) \in S'_{\Lambda}(\kappa)$ it suffices to prove that $U' \subset \Phi(U^{\sharp})\cap U^{\sharp} $. Recall from Proposition \ref{DSpl} that we have the relation $M' \subset \tau^{-1}(M^\sharp) \cap M^\sharp$. Now observe that conditions $M' \subset M^{\sharp} $ and $M'\subset \tau^{-1}(M^{\sharp}) $ are equivalent to $U' \subset \Phi(U^{\sharp}) $ and $U' \subset U^{\sharp} $ respectively. This shows that $g_{\mathcal{Z}}(x) \in S'_{\Lambda}(\kappa)$.
\end{proof}
}

\subsubsection{$\calY^\spl$-strata}
Let $\Lambda \subset C$ be a vertex lattice of type $2t$ with $t< h$. The goal of this section is to construct an isomorphism between the BT-stratum $\mathcal{Y}^{\mathrm{spl}}(\Lambda^{\sharp})$ and the modified Deligne–Lusztig variety $R'_{\Lambda^{\sharp}}$ defined in \ref{DLVs}. Since the construction is similar to that of the previous section, our discussion will be brief. 

For any $\bar{k}$-algebra $R$ and an $R$-point $(X, \iota, \lambda, \rho, \mathcal{F}) \in \mathcal{Y}^{\rm spl}(\Lambda^\sharp)(R)$, we have the following chains of isogenies
\[
\rho_{\Lambda^\sharp,\pi^{-1}\Lambda} : X_{\Lambda^\sharp, R}
\xrightarrow{\rho_{\Lambda^\sharp, X^\vee}} X^\vee
\xrightarrow{\lambda^\vee} X
\xrightarrow{\rho_{X^, \pi^{-1}\Lambda}} X_{\pi^{-1}\Lambda, R}.
\]
Applying de Rham realization, we obtain the sequence of $R$-modules:
\begin{equation}
D(X_{\Lambda^\sharp, R}) \xrightarrow{D(\rho_{\Lambda^\sharp, X^\vee})}
D(X^\vee) \xrightarrow{D(\lambda^\vee)} 
D(X)\xrightarrow{D(\rho_{X^, \pi^{-1}\Lambda})} 
D(X_{\pi^{-1}\Lambda, R}).
\end{equation}
\quash{
\[
\begin{tikzcd}
\Lambda_{t,R}\arrow{r} \arrow[d,"\sim"]
&
\Lambda_{h,R} \arrow{r} \arrow[d,"\sim"]
&
\Lambda_{n-h,R} \arrow{r} \arrow[d,"\sim"]
&
\Lambda_{n-t,R} \arrow[d,"\sim"]
\\
D(X_{\Lambda^\sharp, R}) \arrow[r]
&
D(X^\vee) \arrow{r} \arrow[r] 
&
D(X)\arrow[r]
&
D(X_{\pi^{-1}\Lambda, R})
\end{tikzcd}
\]
}
By definition, the image $\mathrm{Im}(D(\rho_{\Lambda^\sharp,\pi^{-1}\Lambda}))$ is a locally free direct summand of $D(X_{\pi^{-1}\Lambda, R})$ of corank $n-2t$, such that
\[
D(X_{\pi^{-1}\Lambda, R}) / \mathrm{Im}(D(\rho_{\Lambda^\sharp,\pi^{-1}\Lambda})) \simeq (\pi^{-1}\Lambda) / \Lambda^\sharp \otimes_{\bar{k}} R =     V_{\Lambda^\sharp, R}.
\]
We have a symmetric form $(  ~ ,~ )$ on $V_{\Lambda^\sharp, R}$ given by $(x,y)=\pi_0h(\tilde{x},\tilde{y})$ where $\tilde{x}, \tilde{y}$ are the lifting points of $x, y$ in $\pi^{-1}\Lambda$.

We have $\ker[\rho_{X^\vee, \pi^{-1}\Lambda}] \subset X^\vee[\pi]$, since $\Lambda$ is a vertex lattice, and so there exists an isogeny $\tilde{\rho}_{X^\vee, \pi^{-1}\Lambda} : X_{\pi^{-1}\Lambda} \to X^\vee$ such that $
\tilde{\rho}_{X^\vee, \pi^{-1}\Lambda} \circ \rho_{X^\vee, \pi^{-1}\Lambda} = \iota(\pi) : X^\vee \to X^\vee$. Consider the following diagram
\[
\begin{tikzcd}[column sep=huge]
D(X_{\Lambda^\sharp,R}) \arrow[r,"D(\rho_{\Lambda^\sharp, X^\vee})"] 
&
D(X^\vee) \arrow[r,"D(\rho_{X^\vee, \pi^{-1}\Lambda})"] 
&
D(X_{\pi^{-1}\Lambda,R}) \arrow[r,"D(\tilde{\rho}_{X^\vee, \pi^{-1}\Lambda})"] 
&
D(X^\vee)
\\
&
\mathrm{Pr}^{-1}(\calF)\arrow[u,hook]
&&
\operatorname{Fil}(X^\vee) \arrow[u,hook]
\end{tikzcd}.    
\]
Similar to Lemma \ref{lm 64}, for a $\bar{k}$-algebra $R$ and an $R$-point $(X, \iota, \lambda, \rho, \mathcal{F}) \in \mathcal{Y}^{\rm spl}(\Lambda)(R)$, we define
\[
\begin{array}{l}
U(X) := D(\tilde{\rho}_{X^\vee, \pi^{-1}\Lambda})^{-1}(\mathrm{Fil}(X^\vee))  / \mathrm{Im}(D(\rho_{\Lambda^{\sharp}, \pi^{-1}\Lambda})),  \\
U'(X) := D(\rho_{X^\vee, \pi^{-1}\Lambda})(\operatorname{Pr}^{-1}(\mathcal{F}))/\mathrm{Im}(D(\rho_{\Lambda^{\sharp}, \pi^{-1}\Lambda})).
\end{array}
\]
These are well-defined since the preimage $D(\tilde{\rho}_{X^\vee, \pi^{-1}\Lambda})^{-1}(\mathrm{Fil}(X^\vee))$ (resp. the image $D(\rho_{X^\vee, \pi^{-1}\Lambda})(\operatorname{Pr}^{-1}(\mathcal{F}))$ ) is a locally free direct summand of $D(X_{\pi^{-1}\Lambda,R})$ that contains
$\mathrm{Im}(D(\rho_{\Lambda^{\sharp}, \pi^{-1}\Lambda}))$. Since the proof is similar to Lemma \ref{lm 64}, we leave the details to the reader. 
Here, the quotient $U(X)$ (resp. $U'(X)$) is a locally free isotropic direct summand of rank $h-t$ (resp. $h-t-1$) and there are isomorphisms
\[
U(X)\simeq \Phi^{-1}(M^\sharp)/\breve{\Lambda}^\sharp,\quad
U'(X)\simeq M'/\breve{\Lambda}^\sharp.
\]
for any perfect field  $ \kappa$ over $\bar{k}$. Thus, by the chain of lattices $M'\subset M^\sharp\subset \Pi^{-1}M\subset \Pi^{-1}\Lambda$, we obtain
$(U(X), U'(X))\in (\mathrm{OGr}(h-t, V_{\Lambda^{\sharp}}) \times \mathrm{OGr}(h-t-1, V_{\Lambda^{\sharp}}))(\kappa)$ with $U'(X)\subset U(X)$.

\begin{Proposition}\label{BijYstrata}
Let $ \kappa$ be a perfect field over $\bar{k}$. There exists a bijective morphism $ f_{\calY}: \mathcal{Y}^{\rm spl}(\Lambda^\sharp)(\kappa) \longrightarrow R_{\Lambda^\sharp}'(\kappa)$ given by $(X, \iota, \lambda, \rho, \calF) \mapsto (U(X), U'(X))$.  
\end{Proposition}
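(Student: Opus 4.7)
The plan is to follow the same three-step template as the proof of Proposition~\ref{BijZstrata}, adapted to the orthogonal setting: first translate a $\kappa$-point via Dieudonné theory into lattice data, then check that the resulting pair $(U(X),U'(X))$ lies in $R'_{\Lambda^\sharp}(\kappa)$, and finally construct an explicit inverse. Write $\breve{\Lambda}^\sharp := \Lambda^\sharp \otimes W_{O_{F_0}}(\kappa)$ and let $\mathrm{Pr}: \pi^{-1}\breve{\Lambda}\twoheadrightarrow \pi^{-1}\breve{\Lambda}/\breve{\Lambda}^\sharp = V_{\Lambda^\sharp,\kappa}$ denote the projection. By Proposition~\ref{DieudLatt}, a point $z \in \mathcal{Y}^{\rm spl}(\Lambda^\sharp)(\kappa)$ corresponds to a pair $(M,M')$ satisfying
\[
\pi\breve{\Lambda}^\sharp \subset \pi M^\sharp \subset M \subset \breve{\Lambda},\quad \breve{\Lambda}^\sharp \subset M' \subset M^\sharp,\quad M' \subset \tau^{-1}(M^\sharp)\cap M^\sharp,\quad \mathrm{length}(M^\sharp/M')=1,
\]
and, as observed just before the statement, $U(X)\simeq \Phi^{-1}(M^\sharp)/\breve{\Lambda}^\sharp$ and $U'(X)\simeq M'/\breve{\Lambda}^\sharp$.

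For well-definedness, the isotropy of $U(X)$ under the form $(x,y)=\pi_0 h(\tilde x,\tilde y)$ follows from $\pi M^\sharp \subset M=(M^\sharp)^\sharp$, which forces $\pi h(M^\sharp,M^\sharp)\subset O_{\breve F}$ and hence $(U(X),U(X))=0$. The two inclusions $U'(X)\subset U(X)$ and $U'(X)\subset \Phi(U(X))$ translate, after quotienting by $\breve{\Lambda}^\sharp$, into $\tau(M')\subset M^\sharp$ and $M'\subset M^\sharp$ respectively, both of which are part of Proposition~\ref{DSpl}. Therefore $(U(X),U'(X))\in R'_{\Lambda^\sharp}(\kappa)$.

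For the inverse, given $(U,U')\in R'_{\Lambda^\sharp}(\kappa)$, I set $M^\sharp := \mathrm{Pr}^{-1}(\Phi(U))$, $M' := \mathrm{Pr}^{-1}(U')$, and define $M$ as the hermitian dual of $M^\sharp$. The chain $\breve{\Lambda}^\sharp\subset M^\sharp \subset \pi^{-1}\breve{\Lambda}$ is immediate from the construction and dualizes to $\pi\breve{\Lambda}^\sharp\subset M \subset \breve{\Lambda}$; the containment $\pi M^\sharp \subset M$ is equivalent to the isotropy of $\Phi(U)$ (and hence of $U$) in $V_{\Lambda^\sharp,\kappa}$; the relations $M'\subset \tau^{-1}(M^\sharp)\cap M^\sharp$ with $\mathrm{length}(M^\sharp/M')=1$ are exactly the defining conditions $U'\subset U\cap\Phi(U)$ with $\dim U'=h-t-1$ of $R'_{\Lambda^\sharp}$. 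The remaining relations of Proposition~\ref{DSpl}, namely $VM^\sharp\subset M'$ and $\Pi M\subset \tau^{-1}(M)\subset \Pi^{-1}M$, follow from the vertex-lattice chain $\pi\breve{\Lambda}^\sharp\subset \breve{\Lambda}\subset \breve{\Lambda}^\sharp\subset \pi^{-1}\breve{\Lambda}$ exactly as in the $\calZ^\spl$-case; for instance $VM^\sharp\subset V(\pi^{-1}\breve{\Lambda})=\breve{\Lambda}\subset \breve{\Lambda}^\sharp \subset M'$. A direct check then shows that $(M,M')\mapsto (U,U')$ is a two-sided inverse to $f_{\calY}$, giving the asserted bijection.

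The expected main obstacle is not theoretical but bookkeeping: one must carefully track the duality between $M$ and $M^\sharp$ and translate each condition of Propositions~\ref{DSpl} and \ref{DieudLatt} into a transparent statement about $U,U'\in V_{\Lambda^\sharp,\kappa}$. Since the structure of $R'_{\Lambda^\sharp}$ precisely mirrors that of $\mathcal{Y}^{\rm spl}(\Lambda^\sharp)$ through the Dieudonné dictionary, no new difficulties arise beyond those already handled in the $\calZ^\spl$-case.
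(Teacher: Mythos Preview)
Your proposal is correct and follows essentially the same route as the paper's own proof: both identify $(U,U')$ with $(\Phi^{-1}(M^\sharp)/\breve{\Lambda}^\sharp,\, M'/\breve{\Lambda}^\sharp)$, verify $U'\subset U\cap\Phi(U)$ from $M'\subset \tau^{-1}(M^\sharp)\cap M^\sharp$, and construct the inverse via $M^\sharp=\mathrm{Pr}^{-1}(\Phi(U))$, $M'=\mathrm{Pr}^{-1}(U')$, with the key observation $VM^\sharp\subset V\pi^{-1}\breve{\Lambda}=\breve{\Lambda}\subset\breve{\Lambda}^\sharp\subset M'$. Your write-up is in fact slightly more thorough than the paper's sketch (you spell out the isotropy of $U(X)$ and the remaining chain conditions), but the argument is the same.
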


\quash{
\begin{Lemma}
For $(X, \iota, \lambda, \rho) \in \mathcal{Y}^{\rm loc}(\Lambda^{\sharp})(R)$, we have induced filtrations
\[
\begin{tikzcd}[column sep=huge]
D(X_{\Lambda^{\sharp},R}) \arrow[r,"D(\rho_{\Lambda^{\sharp}, \pi^{-1}\Lambda})"] 
&
D(X_{\Lambda,R}) \arrow[r,"D(\tilde{\rho}_{X^\vee, \pi^{-1}\Lambda})"] 
&
D(X^\vee)
\\
\Pi D(X_{\Lambda^{\sharp},R}) \arrow[u,hook]\arrow{r}
&
\Pi D(X_{\Lambda,R})\arrow[u,hook]\arrow{r}
&
\operatorname{Fil}(X^\vee) \arrow[u,hook]
\end{tikzcd}
\]
where the preimage $D(\tilde{\rho}_{X^\vee, \pi^{-1}\Lambda})^{-1}(\mathrm{Fil}(X^\vee)) \subset D(X_{\Lambda, R})$ is a locally free direct summand that contains
$\mathrm{Im}(D(\rho_{\Lambda^{\sharp}, \pi^{-1}\Lambda}))$. Also, the quotient
\[
U(X) := D(\tilde{\rho}_{X^\vee, \pi^{-1}\Lambda})^{-1}(\mathrm{Fil}(X^\vee))  / \mathrm{Im}(D(\rho_{\Lambda^{\sharp}, \pi^{-1}\Lambda})) \subset V_{\Lambda^{\sharp}, R} 
\]
is a locally free isotropic direct summand of rank $h-t$.
\end{Lemma}
\begin{proof}
See \cite[Lemma 7.7]{HLS2}.     
\end{proof}

\begin{Proposition}\label{YbijectionLoc}
a) Let $ \kappa$ be a perfect field over $\bar{k}$  and define the map $ f_{\calY}: \mathcal{Y}^{\rm loc}(\Lambda)(\kappa) \longrightarrow R_{\Lambda^{\sharp}}(\kappa)$ given by $(X, \iota, \lambda, \rho) \mapsto U(X)$. The map $ f_{\calY}$ gives a bijection.     

b) The morphism $  f_{\calY}$ defines an isomorphism of $ \mathcal{Y}^{\rm loc}(\Lambda^{\sharp})$ and $R_{\Lambda^{\sharp}}$.
\end{Proposition}
\begin{proof}
The first claim follows from \cite[Lemma 7.8]{HLS2} and the second from \cite[Theorem 7.9]{HLS2}.
\end{proof}
Now, let $ \kappa$ be a perfect field over $\bar{k}$ and define the map
\[
g_{\mathcal{Y}}:  \mathcal{Y}^{\rm spl}(\Lambda^{\sharp})(\kappa) \longrightarrow (\mathrm{OGr}(h-t, V_{\Lambda^{\sharp}}) \times \mathrm{Gr}(h-t-1, V_{\Lambda^{\sharp}}))(\kappa)
\]
given by $(X, \iota, \lambda, \rho, \mathcal{F}) \mapsto (U(X), \mathcal{F}(X)) \in (\mathrm{OGr}(h-t, V_{\Lambda^{\sharp}}) \times \mathrm{Gr}(h-t-1, V_{\Lambda^{\sharp}}))(\kappa)$.
\begin{Lemma}
The map $g_{\mathcal{Y}}$ defines a bijection between $\mathcal{Y}^{\rm spl}(\Lambda^{\sharp})(\kappa)$ and $R'_{\Lambda^{\sharp}}(\kappa)$ where $\kappa$ is a perfect field over $\bar{k}$.
\end{Lemma}
}
\begin{proof}
We give a sketch of the proof since it similar to the proof of Lemma \ref{BijZstrata}. For a point $y \in \mathcal{Y}^{\rm spl}(\Lambda^{\sharp})$ we have $f_{\mathcal{Y}}(y) = (U, U')$ where
\[
(U, U') = (\Pi^{-1} V M^{\sharp} / \breve{\Lambda}^{\sharp}, M' /\breve{\Lambda}^{\sharp}) = (\tau^{-1}(M^{\sharp})/\breve{\Lambda}^{\sharp}, M' / \breve{\Lambda}^{\sharp}) = (\Phi^{-1}(M^{\sharp}/\breve{\Lambda}^{\sharp}), M'/\breve{\Lambda}^{\sharp}).
\]
To show that $f_{\mathcal{Y}}(y) \in R'_{\Lambda^{\sharp}}(\kappa)$ it suffices to prove that $U' \subset \Phi(U)\cap U $. 
Observe that conditions $M' \subset M^{\sharp} $ and $M'\subset \tau^{-1}(M^{\sharp}) $ are equivalent to $U' \subset \Phi(U) $ and $U' \subset U $ respectively. This shows that $f_{\mathcal{Y}}(y) \in R'_{\Lambda^{\sharp}}(\kappa)$.

Conversely, assume $(U, U') \in R'_{\Lambda^\sharp}(\kappa)$ and let $M^{\sharp} = \mathrm{Pr}^{-1}(\Phi(U))$ and $M' = \mathrm{Pr}^{-1}(U')$, where $\mathrm{Pr} :
\pi^{-1}\breve{\Lambda}\to \pi^{-1}\breve{\Lambda}/\breve{\Lambda}^{\sharp}$ is the natural projection map. Then by definition we have $\breve{\Lambda}^{\sharp}\subset M' \subset M^{\sharp} $. To show that $(M, M') \in \mathcal{Z}^{\mathrm{spl}}(\Lambda)(\kappa)$, it suffices, by the above, to show that $V M^{\sharp} \subset M'$. Observe that $V M^{\sharp} \subset V \Pi^{-1}\breve{\Lambda} = \ \breve{\Lambda} \subset \breve{\Lambda}^{\sharp} \subset M'$ and so $V M^{\sharp} \subset M'$. From the above we deduce that $f_{\mathcal{Y}}$ defines a bijection between $\mathcal{Y}^{\rm spl}(\Lambda^{\sharp})(\kappa)$ and $R'_{\Lambda^{\sharp}}(\kappa)$.
\end{proof}
\begin{Theorem}\label{Isom.Y}
The map $f_{\mathcal{Y}}:  \mathcal{Y}^{\rm spl}(\Lambda^{\sharp})\rightarrow R'_{\Lambda^{\sharp}} $ is an isomorphism.  
\end{Theorem}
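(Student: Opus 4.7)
The plan is to parallel the proof of Theorem \ref{Isom.X}: first upgrade the bijection of Proposition \ref{BijYstrata} from perfect fields to arbitrary fields over $\bar k$, then conclude that $f_\calY$ is a closed immersion, and finally use dimensions and smoothness to promote the closed immersion to an isomorphism.

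First, I would extend Proposition \ref{BijYstrata} to $\kappa'$-points for any field $\kappa'/\bar k$. Over a perfect base, the computation of $f_\calY$ in Proposition \ref{BijYstrata} was carried out in terms of pairs of Dieudonn\'e lattices $(M,M')$. For general $\kappa'$, one replaces Dieudonn\'e modules by (nilpotent) $O_{F_0}$-displays and repeats the formal computation verbatim, exactly as done for the $\calZ^\spl$ side in the proof of Theorem \ref{Isom.X} (following \cite[Proposition 7.5]{HLS2} and the display-theoretic input of \S \ref{sec2.1}). The identifications $U(X)\simeq \Phi^{-1}(M^\sharp)/\breve\Lambda^\sharp$ and $U'(X)\simeq M'/\breve\Lambda^\sharp$, together with the equivalences of the conditions $M'\subset M^\sharp$ with $U'\subset \Phi(U)$ and $M'\subset \tau^{-1}(M^\sharp)$ with $U'\subset U$, are purely formal lattice-theoretic statements that survive the passage from perfect to arbitrary fields.

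This bijectivity on all field-valued points is exactly the statement that $f_\calY$ is universally injective with isomorphism diagonal, i.e. a monomorphism of schemes. Since both $\mathcal{Y}^{\rm spl}(\Lambda^\sharp)$ and $R'_{\Lambda^\sharp}$ are projective over $\bar k$, the morphism $f_\calY$ is proper; a proper monomorphism of finite type is a closed immersion, so $f_\calY$ realizes $\mathcal{Y}^{\rm spl}(\Lambda^\sharp)$ as a closed subscheme of $R'_{\Lambda^\sharp}$.

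To finish, I would invoke smoothness and a dimension count. By Corollary \ref{Reducedness}(b), $\mathcal{Y}^{\rm spl}(\Lambda^\sharp)$ is smooth and reduced of pure dimension $n-t-h-1$, and by Proposition \ref{Rsmooth}, $R'_{\Lambda^\sharp}$ is smooth and irreducible of the same dimension $n-t-h-1$. A closed immersion whose source has pure dimension equal to that of the irreducible target must surject onto that target, so $f_\calY$ is a surjective closed immersion between reduced schemes, and hence an isomorphism. The main non-routine step is the first one, the extension of bijectivity on points from perfect to arbitrary fields via displays; all other steps are formal. Note that the argument here is actually cleaner than in the $\calZ^\spl$-case: because $\mathcal{Y}^{\rm spl}(\Lambda^\sharp)$ is already smooth—whereas $\calZ^\spl(\Lambda)$ was only normal and Cohen--Macaulay—one does not need to invoke an independent irreducibility statement like Proposition \ref{S'irreducibility}; smoothness and equality of dimensions alone force surjectivity.
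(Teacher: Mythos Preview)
Your proposal is correct and follows essentially the same approach as the paper: extend the bijection on points to arbitrary fields via displays (as in Theorem \ref{Isom.X}), deduce that $f_{\mathcal{Y}}$ is a proper monomorphism hence a closed immersion, and then use that $R'_{\Lambda^\sharp}$ is irreducible of the same dimension (Proposition \ref{Rsmooth}) to conclude. Your write-up spells out the surjectivity-plus-reducedness step more explicitly than the paper does, and your closing remark that the $\mathcal{Y}^{\rm spl}$-case is cleaner than the $\mathcal{Z}^{\rm spl}$-case (no analogue of Proposition \ref{S'irreducibility} needed) is a correct and worthwhile observation.
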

\begin{proof}
Using the same proof as in Theorem \ref{Isom.X} we obtain that $f_{\mathcal{Y}}:  \mathcal{Y}^{\rm spl}(\Lambda^{\sharp})\rightarrow R'_{\Lambda^{\sharp}} $ is a closed immersion. Also, by Proposition \ref{Rsmooth}, $ R'_{\Lambda^{\sharp}}$ is irreducible with the same
dimension as $\mathcal{Y}^{\rm spl}(\Lambda^{\sharp})$. Therefore, $f_{\mathcal{Y}}$ is an isomorphism.     
\end{proof}

\begin{Corollary}
The BT-stratum $ \mathcal{Y}^{\rm spl}(\Lambda^{\sharp})$ is smooth and irreducible.    
\end{Corollary}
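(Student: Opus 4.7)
The plan is to deduce this immediately from the two previous results combined with Corollary \ref{t0}. First I would split into cases according to whether $t = h$ or $t < h$.

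In the case $t = h$, Corollary \ref{t0} (2) already gives an isomorphism $\mathcal{Y}^{\rm spl}(\Lambda^{\sharp}) \cong \mathbb{P}^{n-1}_{\bar{k}}$, which is visibly smooth and irreducible, so nothing further is needed.

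In the case $t < h$, Theorem \ref{Isom.Y} provides an isomorphism $f_{\mathcal{Y}}: \mathcal{Y}^{\rm spl}(\Lambda^{\sharp}) \xrightarrow{\sim} R'_{\Lambda^{\sharp}}$, and Proposition \ref{Rsmooth} asserts that $R'_{\Lambda^{\sharp}}$ is irreducible and smooth of dimension $n-t-h-1$. Transporting these properties along $f_{\mathcal{Y}}$ yields smoothness and irreducibility of $\mathcal{Y}^{\rm spl}(\Lambda^{\sharp})$. The dimension count is also consistent with Corollary \ref{Reducedness} (b), which provides an independent sanity check.

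There is essentially no obstacle here, since all of the substantive geometric content has already been established: the intersection-theoretic smoothness argument inside Proposition \ref{Rsmooth} (transversality of $f_1$ and $f_2$ because Frobenius induces the zero map on tangent spaces), and the closed immersion argument of Theorem \ref{Isom.X} that was reused in Theorem \ref{Isom.Y}. So the corollary is a one-line consequence of these two results together with the $t=h$ case from Corollary \ref{t0}.
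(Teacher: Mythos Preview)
Your proposal is correct and follows essentially the same route as the paper: for $t<h$ you invoke Theorem \ref{Isom.Y} and Proposition \ref{Rsmooth}, which is exactly the paper's one-line proof. Note, however, that the case split is unnecessary here, since the entire subsection (and hence the corollary) operates under the standing assumption $t<h$; the $t=h$ case is already handled separately in Corollary \ref{t0} and is not part of this statement's scope.
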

\begin{proof}
The result follows from Theorem \ref{Isom.Y} and Proposition \ref{Rsmooth}.    
\end{proof}

\begin{Remark}
{\rm
From \cite[Theorem 7.9]{HLS2}, there exists an isomorphism $\Phi_{\calY}: \calY^\loc(\Lambda^\sharp)\rightarrow R_{\Lambda^\sharp}$ given by $(X, \iota, \lambda, \rho) \mapsto U(X)$. We also have the following commutative diagram:
\[
\begin{tikzcd}
\calY^\spl(\Lambda^\sharp) \arrow[r,"f_\calY"]\arrow[d,"\mathrm{Pr}_1"]
&
R_{\Lambda^\sharp}'\arrow[d,"\mathrm{Pr}_2"]
\\
\calY^\loc(\Lambda^\sharp)\arrow[r,"\Phi_\calY"]
&
R_{\Lambda^\sharp}
\end{tikzcd},
\]
where $\mathrm{Pr}_1$ is given by $(X, \iota, \lambda, \rho, \calF)\mapsto (X, \iota, \lambda, \rho)$ and $\mathrm{Pr}_2$ is given by $(U, U')\mapsto U$.
}
\end{Remark}

\subsubsection{Intersection of $\calZ^\spl$-strata and $\calY^\spl$-strata}\label{IntZY}
We now discuss the intersection of $\calZ^\spl$-strata and $\calY^\spl$-strata. Let $\Lambda_1\subset \Lambda_2$ be vertex lattices with types $2t_1$ and $2t_2$, respectively, satisfying $2t_2<2h<2t_1$. We define the subvariety $R'_{[\Lambda_1, \Lambda_2]}\subset R'_{\Lambda_2^\sharp}$ whose $\bar{k}$-points are given by
\[
R'_{[\Lambda_1, \Lambda_2]}(\bar{k})=\left\{ (U, U')\in  R'_{\Lambda_2^\sharp} \mid U' \subset U\subset W \right\},
\]
where $W:=\Lambda_1^\sharp/ \Lambda_2^\sharp$ has dimension $t_1-t_2$. Equivalently,
{\small
\[
R'_{[\Lambda_1, \Lambda_2]}(\bar{k})= \left\{ (U, U') \in \left(\mathrm{OGr}(h-t_2, W_{\bar{k}}) \times \mathrm{OGr}(h-t_2-1, W_{\bar{k}})\right)(\bar{k}) \;\middle|\; U' \subset U \cap \Phi(U) \right\}.
\]
}
\begin{Proposition}
The projective variety $R'_{[\Lambda_1, \Lambda_2]}$ is irreducible and smooth of dimension $t_1-t_2-1$. 
\end{Proposition}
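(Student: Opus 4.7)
The plan is to adapt the strategy of Proposition 6.1.6 (the proof that $R'_{\Lambda^\sharp}$ is smooth and irreducible) by introducing an auxiliary closed subvariety of a Grassmannian that plays the role of $R_{\Lambda^\sharp}$. The key structural simplification here is that, because $\Lambda_1 \subset \Lambda_2$ are both vertex lattices, the chain $\pi\Lambda_1^\sharp \subset \Lambda_1 \subset \Lambda_2$ forces the induced orthogonal form on $V_{\Lambda_2^\sharp}$ to vanish identically on $W = \Lambda_1^\sharp/\Lambda_2^\sharp$ (indeed, for $\tilde x, \tilde y \in \Lambda_1^\sharp$ one has $\pi_0 h(\tilde x, \tilde y) = \pi^2 h(\tilde x, \tilde y) \in \pi O_F$, since $h(\Lambda_1^\sharp, \Lambda_1^\sharp) \subset \pi^{-1}O_F$). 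Consequently the isotropy condition in $\mathrm{OGr}(i, W_{\bar{k}})$ is automatic and these spaces coincide with the ordinary Grassmannians $\mathrm{Gr}(i, W_{\bar{k}})$.

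Set $i := h - t_2$ and let $\mathrm{Fl}$ denote the two-step flag variety $\{(U, U') : U' \subset U \subset W_{\bar{k}},\ \dim U = i,\ \dim U' = i - 1\}$. I would realize $R'_{[\Lambda_1, \Lambda_2]}$ as the scheme-theoretic intersection $\mathrm{Im}(f_1) \cap \mathrm{Im}(f_2)$ inside $(\mathrm{Gr}(i, W_{\bar{k}}) \times \mathrm{Gr}(i-1, W_{\bar{k}}))^2$, where $f_1 : \mathrm{Fl}^2 \hookrightarrow (\mathrm{Gr}(i, W_{\bar{k}}) \times \mathrm{Gr}(i-1, W_{\bar{k}}))^2$ is the natural inclusion and $f_2: \mathrm{Fl} \hookrightarrow (\mathrm{Gr}(i, W_{\bar{k}}) \times \mathrm{Gr}(i-1, W_{\bar{k}}))^2$ sends $(U, U') \mapsto (U, U', \Phi(U), U')$. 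Both images are smooth (homogeneous under suitable group actions), and since $\Phi$ induces the zero map on tangent spaces, the intersection is transverse exactly as in the proof of Proposition 6.1.6. This yields smoothness of $R'_{[\Lambda_1, \Lambda_2]}$.

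For dimension and irreducibility I would introduce the auxiliary variety $S_{[\Lambda_1, \Lambda_2]} := \{U \in \mathrm{Gr}(i, W_{\bar{k}}) \mid \dim(U \cap \Phi(U)) \geq i - 1\}$ and study the forgetful map $\varphi: R'_{[\Lambda_1, \Lambda_2]} \to S_{[\Lambda_1, \Lambda_2]}$, $(U, U') \mapsto U$. Arguing exactly as in Lemma 6.1.2, $\varphi$ is surjective and an isomorphism outside the closed locus $T'' := \{U : U = \Phi(U)\}$ of $\Phi$-fixed points, with fibers over $T''$ isomorphic to $\mathbb{P}^{i-1}_{\bar{k}}$ (the projective space of hyperplanes in $U$). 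The variety $S_{[\Lambda_1, \Lambda_2]}$ is a classical Grassmannian-type Deligne--Lusztig variety, and I would prove it is irreducible of dimension $t_1 - t_2 - 1$ either by a direct Schubert-cell analysis (the condition $\dim(U + \Phi(U)) \leq i + 1$ cuts out a unique top-dimensional Bruhat stratum) or by exhibiting it as a projective bundle over a lower-dimensional $\Phi$-stable base via the assignment $U \mapsto U \cap \Phi(U)$. Combining this with the fact that $T''$ has strictly lower dimension then forces $R'_{[\Lambda_1, \Lambda_2]}$ to be irreducible of dimension $t_1 - t_2 - 1$.

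The main obstacle is the uniform analysis of the auxiliary Deligne--Lusztig variety $S_{[\Lambda_1, \Lambda_2]}$: the dimension count is routine once one looks at the locus of generic position of $U$ and $\Phi(U)$, but establishing irreducibility uniformly in $(h, t_1, t_2)$ requires either a careful stratification indexed by the relative position of $U$ and $\Phi(U)$ or, alternatively, reducing to connectedness of $R'_{[\Lambda_1, \Lambda_2]}$ itself (using smoothness and projectivity already established in Step~1) via an explicit deformation argument organized by the dimension of $U \cap \Phi(U)$.
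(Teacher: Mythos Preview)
Your approach is essentially the same as the paper's: both establish smoothness via the transversality argument of Proposition~\ref{Rsmooth}, introduce the auxiliary variety $S_{[\Lambda_1,\Lambda_2]}=\{U\in\mathrm{OGr}(h-t_2,W_{\bar k})\mid \dim(U\cap\Phi(U))\ge h-t_2-1\}$, and use the forgetful map $(U,U')\mapsto U$ to read off irreducibility and the dimension. The paper dispatches what you identify as the ``main obstacle'' (irreducibility and dimension of $S_{[\Lambda_1,\Lambda_2]}$) by a direct citation of \cite[Proposition~6.11]{HLS2}, rather than by a Schubert-cell or fibration argument. Your observation that $W=\Lambda_1^\sharp/\Lambda_2^\sharp$ is totally isotropic for the induced orthogonal form, so that $\mathrm{OGr}(i,W_{\bar k})=\mathrm{Gr}(i,W_{\bar k})$ and the auxiliary variety becomes a $\GL$-type Deligne--Lusztig closure, is correct and a genuine simplification the paper does not make explicit; with it, the irreducibility step becomes a standard fact about DL varieties for $\GL_{t_1-t_2}$, which makes your proposed direct argument quite feasible.
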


\begin{proof}
The proof is similar to that of Proposition \ref{Rsmooth}. Consider the reduced closed subvariety $S_{[\Lambda_1, \Lambda_2]}$ of  $ \mathrm{OGr}(h-t_2, V_{\Lambda_2^{\sharp},\bar{k}})$ whose $\bar{k}$-points are
\[
S_{[\Lambda_1, \Lambda_2]}(\bar{k}) = \left\{ U \in \mathrm{OGr}(h-t_2, V_{\Lambda_2^{\sharp},\bar{k}})(\bar{k}) \mid  U\subset W, \operatorname{dim} (U \cap \Phi(U)) \geq h-t_2-1 \right\}.
\]
Equivalently,  
\[
S_{[\Lambda_1, \Lambda_2]}(\bar{k}) = \left\{ U \in \mathrm{OGr}(h-t_2, W_{\bar{k}})(\bar{k}) \mid   \operatorname{dim} (U \cap \Phi(U)) \geq h-t_2-1 \right\}.
\]
There exists a forgetful morphism $\varphi_{[\Lambda_1, \Lambda_2]}:R'_{[\Lambda_1, \Lambda_2]}\rightarrow S_{[\Lambda_1, \Lambda_2]}$ given by $(U,U')\mapsto U$. By \cite[Proposition 6.11]{HLS2}, $S_{[\Lambda_1, \Lambda_2]}$ is irreducible and normal of dimension $t_1-t_2-1$. Using the same method as in the proof of Proposition \ref{Rsmooth}, we deduce that $R'_{[\Lambda_1, \Lambda_2]}$ is irreducible, smooth and of dimension $t_1-t_2-1$.
\end{proof}

\begin{Theorem}\label{Thm 613}
The restriction of the morphism $f_{\mathcal{Y}}:  \mathcal{Y}^{\rm spl}(\Lambda_2^{\sharp})\rightarrow R'_{\Lambda_2^{\sharp}} $ to the intersection $\calZ^\spl(\Lambda_1)\cap \mathcal{Y}^{\rm spl}(\Lambda^{\sharp}_2)$ defines an isomorphism
\[
f_{\calZ \cap\calY}: \calZ^\spl(\Lambda_1)\cap \mathcal{Y}^{\rm spl}(\Lambda^{\sharp}_2)\rightarrow R'_{[\Lambda_1, \Lambda_2]}.
\]
\end{Theorem}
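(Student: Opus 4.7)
The plan is to exploit that $f_{\mathcal{Y}}\colon \mathcal{Y}^{\mathrm{spl}}(\Lambda_2^{\sharp})\to R'_{\Lambda_2^{\sharp}}$ is already an isomorphism by Theorem \ref{Isom.Y}, and to show that under this isomorphism the closed subscheme $\calZ^{\spl}(\Lambda_1)\cap \calY^{\spl}(\Lambda_2^{\sharp})$ corresponds exactly to the closed subvariety $R'_{[\Lambda_1,\Lambda_2]}$. First I would verify that $f_{\calZ\cap\calY}$ is well-defined, i.e.\ that $f_{\mathcal{Y}}$ sends the intersection into $R'_{[\Lambda_1,\Lambda_2]}$. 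For a $\kappa$-point $(X,\iota,\lambda,\rho,\calF)$ of the intersection, corresponding by Proposition \ref{DieudLatt} to a pair $(M,M')$ satisfying simultaneously $\breve{\Lambda}_1\subset M\subset M^{\sharp}\subset \breve{\Lambda}_1^{\sharp}$ and $\pi\breve{\Lambda}_2^{\sharp}\subset \pi M^{\sharp}\subset M\subset \breve{\Lambda}_2$, the image $U(X)\simeq \Phi^{-1}(M^{\sharp})/\breve{\Lambda}_2^{\sharp}$ lands in $\breve{\Lambda}_1^{\sharp}/\breve{\Lambda}_2^{\sharp}=W$ because $\breve{\Lambda}_1^{\sharp}$ is $\tau$-invariant and $M^{\sharp}\subset \breve{\Lambda}_1^{\sharp}$. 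This shows $f_{\calZ\cap\calY}(X,\iota,\lambda,\rho,\calF)\in R'_{[\Lambda_1,\Lambda_2]}(\kappa)$.

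Next I would show bijectivity on $\kappa$-points, following the pattern used in the proof of Proposition \ref{BijYstrata}. Given $(U,U')\in R'_{[\Lambda_1,\Lambda_2]}(\kappa)$, set $M^{\sharp}:=\operatorname{Pr}^{-1}(\Phi(U))$ and $M':=\operatorname{Pr}^{-1}(U')$ inside $\pi^{-1}\breve{\Lambda}_2$. By the proof of Proposition \ref{BijYstrata} this yields a point of $\calY^{\spl}(\Lambda_2^{\sharp})(\kappa)$. The additional constraint $U\subset W$ forces $\Phi(U)\subset W$ (as $W$ is $\Phi$-stable), hence $M^{\sharp}\subset \breve{\Lambda}_1^{\sharp}$, which dualizes to $\breve{\Lambda}_1\subset M$; together with $M'\supset \breve{\Lambda}_2^{\sharp}\supset \breve{\Lambda}_1$ this places $(M,M')$ in $\calZ^{\spl}(\Lambda_1)\cap \calY^{\spl}(\Lambda_2^{\sharp})(\kappa)$ via Proposition \ref{DieudLatt}, and $f_{\calZ\cap\calY}$ recovers $(U,U')$ by construction.

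Finally I would conclude that $f_{\calZ\cap \calY}$ is an isomorphism. Since $f_{\mathcal{Y}}$ is an isomorphism of schemes, its restriction to the reduced closed subscheme $\calZ^{\spl}(\Lambda_1)\cap \calY^{\spl}(\Lambda_2^{\sharp})$ is a closed immersion onto a reduced closed subscheme of $R'_{\Lambda_2^{\sharp}}$, necessarily contained in $R'_{[\Lambda_1,\Lambda_2]}$ by the well-definedness argument above. By Corollary \ref{Reducedness}(c) the source is smooth and of dimension $t_1-t_2-1$, while by the proposition preceding the statement $R'_{[\Lambda_1,\Lambda_2]}$ is also smooth, irreducible and of dimension $t_1-t_2-1$. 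A closed immersion between reduced irreducible schemes of the same dimension that is bijective on geometric points is an isomorphism, so the claim follows.

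The main technical point to be careful about is the scheme-theoretic (not merely set-theoretic) identification of the image with $R'_{[\Lambda_1,\Lambda_2]}$; this is where I rely crucially on the smoothness/reducedness of both sides obtained from the strata splitting model analysis and on the fact that $W$ is $\Phi$-stable so that the condition $U\subset W$ on $R'_{\Lambda_2^{\sharp}}$ cuts out exactly the closed locus corresponding to the extra containment $\breve{\Lambda}_1\subset M$ on the Dieudonné side.
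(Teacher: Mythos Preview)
Your proposal is correct and follows essentially the same approach as the paper: both restrict the isomorphism $f_{\calY}$ of Theorem \ref{Isom.Y}, verify on Dieudonn\'e lattices that the image lands in $R'_{[\Lambda_1,\Lambda_2]}$ (via $M^{\sharp}\subset \breve{\Lambda}_1^{\sharp}$ and $\Phi$-stability of $W$), and check the converse direction by pulling back $(U,U')$ and observing $\breve{\Lambda}_1\subset \breve{\Lambda}_2\subset \breve{\Lambda}_2^{\sharp}\subset M'$. You are slightly more explicit than the paper in justifying the scheme-theoretic conclusion, invoking Corollary \ref{Reducedness}(c) and the smoothness of $R'_{[\Lambda_1,\Lambda_2]}$; note that irreducibility of the source is not needed for that last step---reducedness of both sides together with the bijection on geometric points already forces the closed immersion into the reduced scheme $R'_{[\Lambda_1,\Lambda_2]}$ to be an isomorphism.
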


\begin{proof}
For a point $(X, \iota, \lambda, \rho, \mathcal{F}) \in \calZ^\spl(\Lambda_1)\cap \mathcal{Y}^{\rm spl}(\Lambda^{\sharp}_2)(\bar{k})$, we have a chain of lattices
\[
\breve{\Lambda}_2^\sharp \subset M(X)'\subset M(X)^\sharp\subset \breve{\Lambda}_1^\sharp\subset \Pi^{-1}\breve{\Lambda}_1\subset \Pi^{-1}\breve{\Lambda}_2,
\]
which implies
\[
U'(X)\subset \Phi(U(X))\subset W\subset (\Pi^{-1}\breve{\Lambda}_1)/ \breve{\Lambda}_2^\sharp\subset V_{\Lambda_2^\sharp}.
\]
Note that $W$ is $\Phi$-invariant and we can easily see that $U' \subset U\subset W$. Thus we have the restriction morphism $f_{\calZ \cap\calY}: \calZ^\spl(\Lambda_1)\cap \mathcal{Y}^{\rm spl}(\Lambda^{\sharp}_2)\rightarrow R'_{[\Lambda_1, \Lambda_2]}$. 

Conversely, for a point $(U, U')\in  R'_{[\Lambda_1, \Lambda_2]}$, we define $M^{\sharp} = \mathrm{Pr}^{-1}(\Phi(U))$ and $M' = \mathrm{Pr}^{-1}(U')$. To check $(M, M')\in \calZ^\spl(\Lambda_1)(\bar{k})$, it suffices to check $\breve{\Lambda}_1\subset M$ and $\breve{\Lambda}_1\subset M'$ by Proposition \ref{BijZstrata}. Observe that $M^\sharp\subset \breve{\Lambda}_1^\sharp$ by $U\subset W$, and $\breve{\Lambda}_1\subset \breve{\Lambda}_2\subset \breve{\Lambda}_2^\sharp\subset M'$. This finishes the proof of the theorem.
\end{proof}

\Addresses
\end{document}